\numberwithin{equation}{section}
\newtheorem{theorem}{Theorem}
\newtheorem{lemma}{Lemma}[section]
\newtheorem{proposition}{Proposition}[section]
\newtheorem{remark}{Remark}[section]
\numberwithin{equation}{section}
\renewcommand{\a}{\alpha}
\renewcommand{\b}{\beta}
\def\R{{\mathbb{R}}}
\def\N{{\mathbb{N}}}
\def\Z{{\mathbb{Z}}}
\def\T{{\mathbb{T}}}
\begin{document}

\title{A family of fractional diffusion equations derived from stochastic harmonic chains with long-range interactions}

\author[H.~Suda]{Hayate Suda}
\address{Graduate School of Mathematical Sciences, University of Tokyo, 3-8-1, Komaba, Meguro-ku, Tokyo, 153--8914, Japan}
\email{hayates@ms.u-tokyo.ac.jp}

\begin{abstract}
We consider one-dimensional infinite chains of harmonic oscillators with stochastic perturbations and long-range interactions which have polynomial decay rate $|x|^{-\theta}, x \to \infty, \theta > 1$, where $x \in \Z$ is the interaction range. We prove that if $2< \theta \le 3$, then the time evolution of the macroscopic thermal energy distribution is superdiffusive and governed by a fractional diffusion equation with exponent $\frac{3}{7-\theta}$, while if $\theta > 3$, then the exponent is $\frac{3}{4}$. The threshold is $\theta = 3$ because the derivative of the dispersion relation diverges as $k \to 0$ when $\theta \le 3$.
\end{abstract}

\maketitle

\section{Introduction}

\subsection{Background : Exponential decay model}

Chains of oscillators are classical microscopic systems which are used widely to understand the macroscopic behavior of energy. In late 90's, anomalous heat transport in one-dimensional unpinned FPU-Chains is numerically observed \cite{LLPb}, and then many groups started to investigate this phenomenon in various ways, see the reviews \cite{D,Ls,LLP}. Since the mathematical analysis of such nonlinear deterministic systems is out of reach of current techniques, the problem has been studied in models where the nonlinealities are replaced by random exchange of momenta which conserve total energy and total momentum, see the review \cite[Chapter 5]{Ls}. The model is defined as follows. Denote by $(p_{x}(t),q_{x}(t)) \in \R \times \R$ the momentum and the position of the particle labeled by $x \in \Z$ at time $t \ge 0$. Their stochastic dynamics is given by the following stochastic differential equation : 
\begin{align*}
\begin{cases}
d q_{x}(t) = p_{x}(t) dt \\
d p_{x}(t) = \{ - (\a * q)_{x}(t) - \frac{\gamma}{2} (\b * p)_{x}(t) \} dt + \sqrt{\gamma} \sum_{z = -1, 0, 1} ( Y_{x+z}p_{x}(t) ) dw_{x+z},
\end{cases}
\end{align*}
where $\a : \Z \to \R$ is the interaction potential which satisfies

$(a.1) ~ \a_{x} \le 0 $ for all $x \in \Z \setminus \{ 0 \}$, $\a_{x} \neq 0$ for some $x \in \Z$.

$(a.2) ~ \a_{x} = \a_{-x} $ for all $ x \in \Z.$

$(a.3) $ There exists some positive constant $C>0$ such that $|\a_{x}| \le Ce^{-\frac{|x|}{C}} $ for all $x \in \Z$.

$(a.4) ~ \widehat{\a}(k) >0 $ for all $k \neq 0$ , $\widehat{\a}(0) = 0 , \widehat{\a}''(0) > 0$.

\noindent Here $\widehat{\a}$ is the discrete Fourier transform defined as
\begin{align*}
\widehat{\a}(k) := \sum_{x \in \Z} \a_{x} e^{-2 \pi \sqrt{-1}x k} , \quad k \in \T, 
\end{align*}
and $\T$ is the one-dimensional torus. $\gamma > 0$ is the strength of the noise and $Y_{x}, x \in \Z$ are vector fields defined as
\begin{align*}
Y_{x} := (p_{x} - p_{x+1})\partial_{p_{x-1}} + (p_{x+1} - p_{x-1})\partial_{p_{x}} + (p_{x-1} - p_{x})\partial_{p_{x+1}},
\end{align*} 
$\{ w_{x}(t) ; x \in \Z, t \ge 0 \} $ are i.i.d. one-dimensional standard Brownian motions, and $\b_{x}$ is defined as
\begin{align*}
\b_{x} := \begin{cases} 6 , &x = 0, \\
-2 , &x = \pm 1, \\ 
-1 , &x = \pm 2, \\ 
0 , &\text{otherwise}. \end{cases}
\end{align*}
Denote by $r_{x}(t), e_{x}(t)$ the inter-particle distance and the energy of particle labeled by $x \in \Z$ defined as
\begin{align*}
r_{x}(t) &:= q_{x}(t) - q_{x-1}(t), \\
e_{x}(t) &:= \frac{1}{2} |p_{x}|^{2} - \frac{1}{4} \sum_{x' \in \Z, x' \neq x} \a_{x-x^{'}} |q_{x} - q_{x'}|^{2}. 
\end{align*}
The divergence of the thermal conductivity for this model was proved in \cite{BBO}. In \cite{KO2}, the authors showed that the stochastic perturbation decouple the phononic (mechanical) energy from the thermal energy. The former converges at ballistic scaling. Actually, the following convergence of the scaled empirical measure of $(p_{x}(t), r_{x}(t), e_{x}(t) )$ holds:
\begin{align*}
\lim_{\epsilon \to 0} \epsilon \sum_{x \in \Z} \begin{pmatrix} p_{x}(\frac{t}{\epsilon}) \\ r_{x}(\frac{t}{\epsilon}) \\ e_{x}(\frac{t}{\epsilon})  \end{pmatrix} J(\epsilon x) = \int_{\R} dy ~ \begin{pmatrix} \bar{p}(y,t) \\ \bar{r}(y,t) \\ \bar{e}_{ph}(y,t) \end{pmatrix} J(y)
\end{align*}
for any test function $J$ where $(\bar{p},\bar{r},\bar{e}_{ph})$ is the solution of the linear wave equation:
\begin{align*}
\begin{cases} \partial_{t} \bar{r}(y,t) = \partial_{y} \bar{p}(y,t) \\ \partial_{t} \bar{p}(y,t) = \frac{\widehat{\a}^{''}(0)}{8\pi^{2}} \partial_{y} \bar{r}(y,t) \\ \partial_{t} \bar{e}_{ph}(y,t) = \frac{\widehat{\a}^{''}(0)}{8\pi^{2}} \partial_{y}(\bar{p}\bar{r})(y,t). \end{cases}
\end{align*}
Notice that the stochastic perturbation does not explicitly affect the time evolution of the phononic energy. On the other hand, the Boltzmann-type equation \textcolor{black}{is obtained} as the time evolution law of the microscopic thermal energy distribution:
\begin{align*}
&\partial_{t} W_{\epsilon}(y,k,t) + \frac{\epsilon \omega^{'}(k)}{2\pi} \partial_{y} W_{\epsilon}(y,k,t) = \gamma [\mathcal{L} W_{\epsilon}(y,\cdot,t)](k) + o_{\epsilon}(1), \\
& \quad (\mathcal{L} f)(k) = \int_{\T} dk ~ R(k,k^{'}) (f(k^{'}) - f(k)),
\end{align*}
where local spectral density of energy $W_{\epsilon}$ depends on the position $y \in \R$ along the chain, the wave number $k \in \T = [-\frac{1}{2}, \frac{1}{2})$ and time $t \ge 0$. $\omega(k) = \sqrt{\widehat{\a}(k)}$ is the dispersion relation and $\mathcal{L}$ is a scattering operator on $\T$. In \cite{BOS,KJO}, under the weak noise assumption $\gamma = \epsilon \gamma_{0}$, the authors showed that the scaled solution of the Boltzmann equation converges to a soluiton of a fractional diffusion equation with the exponent $\frac{3}{4}$ by a two-step procedure. \textcolor{black}{As a first step, by taking a kinetic limit with time scale $\frac{t}{\epsilon}$, the so-called Boltzmann equation is derived:}
\begin{align*}
\partial_{t} W(y,k,t) + \frac{\omega^{'}(k)}{2\pi} \partial_{y} W(y,k,t) = \gamma_{0} [\mathcal{L} W(y,\cdot,t)](k).
\end{align*}
\textcolor{black}{As a second step}, they consider a limit of the rescaled energy distribution $\{ W_{N}(y,k,Nt) \}_{N \in \N}$ defined as the solution of 
\begin{align*}
\partial_{t} W_{N}(y,k,t) + \frac{\omega^{'}(k)}{2N^{\frac{2}{3}} \pi} \partial_{y} W_{N}(y,k,t) = \gamma_{0} [\mathcal{L} W_{N}(y,\cdot,t)](k).
\end{align*}
Thanks to the scattering effect $\mathcal{L}$, the resulting limit $u(y,t) := \lim_{N \to \infty} W_{N}(y,k,Nt)$ is homogenized on $\T$ and satisfies $\partial_{t} u(y,t) = - c_{\a,\gamma_{0}} (-\Delta)^{\frac{3}{4}} u(y,t)$. More recently, in \cite{JKO} the authors proved a direct limit to the fractional diffusion from the microscopic model with the stronger noise $\gamma = \epsilon^{s} \gamma_{0} , 0 \le s < 1$, and the time evolution of the direct limit $\{ W(y,t) ; y \in \R, t \ge 0 \}$ is governed by $\partial_{t} W(y,t) = - C_{\a,\gamma_{0}} (-\Delta)^{\frac{3}{4}} W(y,t)$. Note that they \textcolor{black}{did} not prove whether $c_{\a,\gamma_{0}} = C_{\a,\gamma_{0}}$ or not. In general, the two-step limit for an anharmonic chain does not coincide with the direct limit of that \cite{MA,S}. 

Since the above scaling limit results agree with numerical simulations and theoretical prediction by H. Spohn \cite{S} about FPU-Chains, the stochastic harmonic chain is considered to be a good approximation of some nonlinear chain. In this way, though the exponential-decay interaction potential may have infinite range, the macroscopic behavior of energy is essentially same with the nearest neighbor model ($\a_{0} = 2, \a_{\pm 1} = -1, \a_{z} = 0 , |z| \ge 2$) and the effect of microscopic long-range interaction on the time evolution of macroscopic energy distribution remains unclear. Hence a natural generalization is to study a model which has slower decay rate. 

\subsection{Polynomial decay model}

In the present study, we consider \textcolor{black}{stochastically perturbed harmonic chains} which have polynomial-decay rate interaction potentials, that is,
\begin{align*}
\a_{x} := - |x|^{- \theta} , \quad x \in \Z \setminus \{ 0 \} \quad \a_{0} := 2 \sum_{x \in \N} |x|^{- \theta} \quad \theta > 1.
\end{align*}
Notice that our interaction potentials do not satisfy the condition $(a.3)$. When $\theta \le 3$, $(a.4)$ is not satisfied because \textcolor{black}{$\widehat{\a}^{''}(k)$ is not a continuous function on $\T$.} Our stochastic perturbation is same with exponential decay model. Following the idea of \cite{BOS,KJO} and \cite{JKO}, we show the direct limit as Theorem \ref{main0} \textcolor{black}{and also} the two-step limit as Theorem \ref{main1} and \ref{main2}. The time evolution of the macroscopic thermal energy is governed by a fractional diffusion equation and the exponent of the fractional diffusion changes according to the value of $\theta$:
\begin{align}
\partial_{t} W(y,t) &= \begin{cases} - (2\pi)^{- \frac{6}{7 - \theta}} C_{\theta,\gamma_{0}} (-\Delta)^{\frac{3}{7-\theta}} W(y,t) \quad &2 < \theta \le 3, \\ - (2 \pi)^{- \frac{3}{2}} C_{\theta,\gamma_{0}} (-\Delta)^{\frac{3}{4}} W(y,t) \quad &\theta > 3, \end{cases} \tag{\text{Thm \ref{main0}}} \\
\partial_{t} u(y,t) &= \begin{cases} - (2\pi)^{- \frac{6}{7 - \theta}} c_{\theta,\gamma_{0}} (-\Delta)^{\frac{3}{7-\theta}} u(y,t) \quad &2 < \theta \le 3, \\ - (2 \pi)^{- \frac{3}{2}} c_{\theta,\gamma_{0}} (-\Delta)^{\frac{3}{4}} u(y,t) \quad &\theta > 3. \end{cases} \tag{\text{Thm \ref{main1}, \ref{main2}}}
\end{align}
We also show that the superdiffusion coefficient obtained by the direct limit coincides with that obtained by the two-step limit, $c_{\theta,\gamma_{0}} = C_{\theta,\gamma_{0}}$. Applying our calculation to the exponential-decay model, one can obtain $c_{\a,\gamma_{0}} = C_{\a,\gamma_{0}}$.

In particular, if $\theta > 3$ then the exponent is same with that of the exponential-decay model. The threshold is $\theta = 3$ because the derivative of the dispersion relation diverges as $k \to 0$ when $\theta \le 3$ : 
\begin{align*}
\omega^{'}(k) := \frac{\widehat{\a}^{'}(k)}{\sqrt{\widehat{\a}(k)}} \sim \begin{cases} |k|^{-\frac{3 - \theta}{2}} \quad &2 < \theta < 3, \\ \sqrt{ - \log k} \quad &\theta = 3, \\ 1 \quad &\theta > 3, \end{cases} \quad k \to 0.
\end{align*}
Roughly speaking, if $\omega^{'}(k) \sim |k|^{a}$ and the mean value of the scattering kernel $R(k) = \int_{\T} dk^{'} R(k,k^{'}) \sim k^{b}$ as $k \to 0$, then one will get $\frac{b+1}{2(b - a)}$ - fractional diffusion by the scaling limit of the thermal energy distribution when $\frac{b+1}{2(b - a)} \in (0,1) $ and normal diffusion when $\frac{b+1}{2(b - a)} \ge 1$ or \textcolor{black}{$a > b \ge 0$}. There are several choices of dispersion relation and stochastic perturbation, and an exponent of fractional diffusion depends on one's choice. Here is a table about asymptotic exponents of feature values of some well-studied models, and one can see that our formal discussion agrees with prior researches, see \cite{BO,JKO,KO,KOS}. Superdiffusion of energy only occurs in the unpinned model of \cite{JKO} and our models. 

\begin{table}[h]
\begin{tabular}{|c||c|c|c|c|c|} \hline
Model & Potential & Noise & $a$ & $b$ & $(b+1)/2(b-a)$ \\ \hline \hline
\cite{BO} & exp.UP & NLMCN & $0$ & $0$ & $0$ \\ \hline
\cite{JKO} & exp.UP & LMCN & $0$ & $2$ & $3/4$ \\ \hline
\cite{JKO} & exp.P & LMCN & $1$ & $2$ & $3/2$ \\ \hline
\cite{KO} & NA & LMCN & $1$ & $2$ & $3/2$ \\ \hline
\cite{KOS} & exp.UP & NLMCN & $0$ & $0$ & $\infty$ \\ \hline
& $2 < \theta < 3$ & LMCN & $-(3 - \theta)/2$ & $2$ & $3/(7-\theta)$ \\ \cline{2-6} 
Our Model & $\theta = 3$ & LMCN & $$0$ \text{ with log-corr.}$ & $2$ & 3/4 \\ \cline{2-6} 
& $\theta > 3$ &LMCN& $0$ & $2$ & 3/4 \\ \hline
\end{tabular}
\end{table}

\begin{center} exp.UP = exp.decay-unpinned, ~ exp.P = exp.decay-pinned, ~ NA = non-acoustic, \\ LMCN = locally momentum conservative noise, \quad NLMCN = non-LMCN. \end{center}

\noindent The effect of the logarithmic correction at $\theta = 3$ does not appear in the exponent of the fractional diffusion, but the space-time scaling is different between the case $\theta > 3$ and $\theta = 3$, see $(\ref{timescaling})$. The threshold of parameters $a,b$ is obtained by convergence or divergence of the following integral: 
\begin{align*}
\lim_{\epsilon \to 0} \int_{\T} dk ~ \frac{R(k) (\omega^{'}(k))^{2}}{(R(k))^{2} + \epsilon^{2}(\omega^{'}(k))^{2}} \begin{cases} = \infty \quad &\frac{b+1}{2(b - a)} \in (0,1], \\
< \infty \quad & \frac{b+1}{2(b - a)} > 1 ~ \text{or} ~ a > b. \end{cases}
\end{align*}
This integral is expected to be proportional to thermal conductivity and diffusive coefficient and appeared in the final part of the proof of Theorem \ref{main0}, see (\ref{mainterm2}). If $\frac{b+1}{2(b - a)} = 1$, then the time
scaling should be diffusive with log-correction, and the expected macroscopic behavior is normal diffusion. Note that if there is no stochastic noise and $\theta > 2$, then the behavior of the thermal energy is purely ballistic.

\textcolor{black}{In \cite{TS}, the authors consider finite model with periodic boundary condition and they obtained the relationship between the decay rate of the polynomial interaction $\delta > 0$ and the decay speed of the current correlation function $C(t) \sim t^{-\beta(\delta)}, t \to \infty$. For the unpinned model, they showed that if $2< \delta < 3$ then $\beta(\delta) = \frac{\theta - 2}{2}$ and if $\delta > 3$ then $\beta(\delta) = \frac{1}{2}$, which is same as short-range model, so their result agrees with ours. }

\textcolor{black}{In a forthcoming article \cite{HS}, we also show the convergence of the scaled empirical measure of $(p_{x}(t), l_{x}(t), e_{x}(t))$ to the limit $(\bar{p}(y,t), \bar{l}(y,t), \bar{e}(y,t))$ at superballistic scaling, where $l_{x}$ is the generalized tension at $x \in \Z$. Especially, $\bar{p}(y,t)$ satisfies the superballistic wave equation:
\begin{align*}
\partial_{t}^{2} \bar{p}(y,t) = \begin{cases} - (\frac{C(\theta)}{2\pi})^{2} (-\Delta)^{\frac{\theta - 1}{2}} \bar{p}(y,t) &1 < \theta \le 3, \\
 (\frac{C(\theta)}{2\pi})^{2} \Delta \bar{p}(y,t) &\theta > 3. \end{cases}
\end{align*}}

Note that we study infinite systems with long-range interactions and non-equilibrium initial distribution of finite total energy, and thus it is appropriate for us to define the dynamics through wave functions $\{ \widehat{\psi}(k,t) ; k \in \T, t \ge 0 \}$. If we start from the wave function, we need some argument to define the energy at $x$ because the inter-particle distance is ``not'' a macroscopic variable, see Section $3$ and \cite{HS}.  In this paper, we assume thermal-type condition $(\ref{initialbound2})$ to derive the scaling limit of the thermal energy. On the other hand, in \cite{HS} we assume the so-called phononic-type condition to derive the hydrodynamic limit for $(p_{x}(t), l_{x}(t), e_{x}(t))$. The above initial conditions guarantee that our systems are in $\mathbb{L}^{2}$ at any time in some sense, and such $\mathbb{L}^{2}$ bound enable us to derive the scaling limit. 

There are numerical results about anharmonic chains with long-range interactions \cite{B},\cite{B2},\cite{ICLLC} and they exhibit anomalous heat conduction. In \cite{B2}, it was observed that the thermal conductivity has non-monotonic dependence with $\theta$ taking a maximum $\theta = 2$. However, it is outside of the scope of the current study to consider the thermal energy behavior in the case $\theta \le 2$ because \textcolor{black}{$\widehat{\a}^{'}(k)$ is not a continuous function and our proof relies on the asymptotic behavior of $\widehat{\a}^{'}(k)$ as $k \to 0$}. Therefore it remains an important open problem. 

Our paper is organized as follows: In Section 2 we prepare some notations. In Section 3 we introduce our model. In Section 4 we state our main results, Theorem $\ref{main0}$, $\ref{main1}$, and $\ref{main2}$. Proofs of Theorem $\ref{main0}$, $\ref{main1}$ and $\ref{main2}$ are given in Section 5, 6, 7 respectively.
\section{Notations}

Let $\T$ be the one-dimensional torus and $\T_{0} := \T \setminus \{ 0 \} $. We often identify $\T \cong [-\frac{1}{2},\frac{1}{2})$. 

For $f, g : \Z \to \R, h \in \ell^{2}(\Z)$ we define $f * g: \Z \to \R$ and $\widehat{h} \in \mathbb{L}^{2}(\T) $ as
\begin{align*}
(f*g)_{x} &:= \sum_{x^{'} \in \Z} f_{x-x^{'}}g_{x^{'}} ,\\
\widehat{h}(k) &:= \sum_{x \in \Z} e^{-2 \pi \sqrt{-1} k x} h_{x}.
\end{align*}

For $J : \R \to \mathbb{C}$ such that $J(y)$ is rapidly decreasing in $y \in \R$ we define $\widetilde{J} : \R \to \mathbb{C}$ as
\begin{align*}
\widetilde{J}(p) := \int_{\R} dy ~ e^{-2 \pi \sqrt{-1} p y} J(y).
\end{align*}

Denote by $\mathbb{S}(\R \times \T)$ the space of smooth functions $J : \R \times \T \to \mathbb{C}$ satisfying
\begin{align*}
\sup_{y \in \R , k \in \T} (1 + y^{2})^{n_{1}}|\partial_{y}^{n_{2}} \partial_{k}^{n_{3}} J(y, k)| < \infty 
\end{align*}
for any $n_{i} \in \Z_{\ge 0} , i = 1,2,3$. Let $\mathbb{S}(\R \times \T_{0})$ be a set of functions $J \in \mathbb{S}(\R \times \T)$ satisfying 
\begin{align*}
\sup_{y \in \R, k \in \T} (1+y^{2})^{n_{1}} |k|^{-n_{2}} |\partial_{y}^{n_{3}} \partial_{k}^{n_{4}} J(y,k) | < \infty,
\end{align*}
for any $n_{i} \in \Z_{\ge 0}, i = 1,2,3,4$. We introduce a norm $||\cdot||$ on $\mathbb{S}(\R \times \T)$ defined as
\begin{align*}
||J|| := \int_{\R} dp \sup_{k} |\widetilde{J}(p,k)| 
\end{align*}
for all $J \in \mathbb{S}(\R \times \T)$. The topology of $\mathbb{S}(\R \times \T)$ is defined via this norm. We regard $\mathbb{S}(\R)$, the Schwartz space on $\R$, as a subspace of $\mathbb{S}(\R \times \T)$: 
\begin{align*}
\mathbb{S}(\R):= \{ J \in \mathbb{S}(\R \times \T) ; J(y,k) = J(y) \}.
\end{align*}
By $\mathbb{S}(\R \times \T)^{'}, \mathbb{S}(\R)^{'}$ we denote the dual spaces of $\mathbb{S}(\R \times \T), \mathbb{S}(\R)$ respectively.

For two functions $f,g$ defined on common domain $A$, we write $f \lesssim g$ if there exists some positive constant $C > 0$ such that $f(a) \le Cg(a)$ for any $a \in A$.

\section{The dynamics}

In this section we define harmonic chains with noise and long-range interactions. Since we analyze the system with finite total energy, it is appropriate for us to define the dynamics through the wave functions $\{ \widehat{\psi}(k,t) ; k \in \T, t \ge 0 \}$ as $\mathbb{L}^{2}(\T)$ solution of the stochastic differential equation \eqref{defofpsi}. Then we can reconstruct the classical variables $\{ p_{x}(t), q_{x}(t) ; x \in \Z , t \ge 0 \}$ from $\{ \widehat{\psi}(k,t) ; k \in \T, t \ge 0 \}$ and define the energy $\{ e_{x}(t) ; x \in \Z, t \ge 0\}$. However, it may be difficult to understand the physical meaning of the important functions such as $\widehat{a}(k)$ and $R(k)$ from \eqref{defofpsi}. To clarify the meaning of the feature values, we first give a formal description of the dynamics in terms of $\{ p_{x}(t), q_{x}(t) ; x \in \Z , t \ge 0 \}$ in Section \ref{subseq:deterministic} and \ref{subseq:stochastic}, and introduce the wave function $\{ \widehat{\psi}(k,t) ; k \in \T, t \ge 0 \}$ in Section \ref{subseq:wavefunction}. Since we do not specify the initial condition $\{ p_{x}(0), q_{x}(0) ; t \ge 0 \}$ until the last half of Section \ref{subseq:stochastic}, the above construction of the dynamics is formal.

\subsection{Deterministic Dynamics}\label{subseq:deterministic}

First we consider harmonic chains with long-range interaction without noise. The configuration space is $(\R \times \R)^{\Z}$ and a configuration is denoted by $\{ (p_{x},q_{x}) \in \R \times \R ; x \in \Z \}$. The formal Hamiltonian of the system is given by 
\begin{align*}
H(p,q) &:= \frac{1}{2} \sum_{x \in \Z} |p_{x}|^{2} + \frac{1}{4} \sum_{x,x' \in \Z, x \neq x'} \frac{1}{|x - x'|^{\theta}} |q_{x} - q_{x'}|^{2} \\
&= \frac{1}{2} \sum_{x \in \Z} |p_{x}|^{2} + \frac{1}{2} \sum_{x,x' \in \Z, x \neq x'} \a_{x - x'} q_{x}q_{x'} \\
&= \sum_{x \in \Z} e_{x}(t) 
\end{align*}
where $e_{x}(t) := \frac{1}{2} |p_{x}|^{2} + \frac{1}{4} \sum_{x' \in \Z, x' \neq x} \frac{1}{|x - x'|^{\theta}} |q_{x} - q_{x'}|^{2} $ is called the energy at $x$, $\a : \Z \to \R$ is defined as
\begin{align*}
\begin{cases}
\a_{x} := - |x|^{- \theta} ,  \quad x \neq 0,  \\
\a_{0} := 2 \sum_{x \in \N} |x|^{ - \theta}
\end{cases}
\end{align*}
and $\theta > 1$ is a positive constant. The time evolution law of $\{ p_{x}(t), q_{x}(t) ; x \in \Z , t \ge 0 \}$ is given by the following differential equations
\begin{align}\label{deterministic}
\begin{cases}
d q_{x}(t) &= \frac{\partial H}{\partial p_{x}} (p(t),q(t)) dt = p_{x}(t) dt \\
d p_{x}(t) &= - \frac{\partial H}{\partial q_{x}} (p(t),q(t)) dt = - (\a * q)_{x} (t) dt .
\end{cases}
\end{align}

We introduce an operator $A$ defined as
\begin{align*}
A := \sum_{x \in \Z} p_{x}\partial_{q_{x}} + \sum_{x^{'} \in \Z} a_{x-x^{'}} q_{x^{'}} \partial_{p_{x}}.
\end{align*}
Then our deterministic dynamics satisfies $\frac{d}{dt} f(p,q) = (A f)(p,q)$ for any smooth local function $f$.


\subsection{Wave function}\label{subseq:wavefunction}

In this subsection we define the wave function of the deterministic Hamiltonian dynamics. From $(\ref{deterministic})$, we have the time evolution of $\{ \widehat{p}(k,t), \widehat{q}(k,t) ; k \in \T, t \ge 0 \}$
\begin{align}\label{deterministicont}
\frac{d}{dt} \begin{pmatrix} \widehat{q}(k,t) \\ \widehat{p}(k,t) \end{pmatrix} = \begin{pmatrix} 0 & 1 \\ -\widehat{a}(k) & 0 \end{pmatrix} \begin{pmatrix} \widehat{q}(k,t) \\ \widehat{p}(k,t) \end{pmatrix}.
\end{align}
The eigenvalues of the matrix appeared in the right hand side of $(\ref{deterministicont})$ are $\pm \sqrt{-1} \omega(k),$  $\omega(k) := \sqrt{\widehat{\a}(k)}$ and corresponding eigenvectors $\widehat{\psi}(k,t) , \widehat{\psi}^{*}(k,t)$ can be written as 
\begin{align}\label{wavefunction}
\widehat{\psi}(k,t) &= \omega(k) \widehat{q}(k,t) + \sqrt{-1} \widehat{p}(k,t) , \notag \\
\widehat{\psi}^{*}(k,t) &= \omega(k) \widehat{q}(-k,t) - \sqrt{-1} \widehat{p}(-k,t) ,
\end{align}
and the time evolution of $\widehat{\psi}(k,t), \widehat{\psi}^{*}(k,t)$ are given by 
\begin{align*}
d \widehat{\psi}(k,t) &= - \sqrt{-1} \omega(k) \widehat{\psi}(k,t) dt , \\
d \widehat{\psi}^{*}(k,t) &= \sqrt{-1} \omega(k) \widehat{\psi}^{*}(k,t) dt. 
\end{align*}
Here we normalize eigenvectors to satisfy $\int_{\T} dk ~ |\widehat{\psi}(k,t)|^{2} = 2 H(p,q)$. We call $ \{ \widehat{\psi}(k,t) ; k \in \T , t \ge 0 \}$ the wave function of the (deterministic) dynamics. 

\begin{remark}\label{mentioned}
From \eqref{wavefunction}, we can represent $\{ \widehat{p}(k,t), \widehat{q}(k,t) ; k \in \T, t \ge 0 \}$ by using the wave function:
\begin{align}\label{defofpq}
\begin{cases}
\widehat{q}(k,t) = \frac{1}{2 \omega(k)} (\widehat{\psi}(k,t) + \widehat{\psi}^{*}(-k,t)) , \\
\widehat{p}(k,t) = - \frac{\sqrt{1}}{2} (\widehat{\psi}(k,t) - \widehat{\psi}^{*}(-k,t)). \end{cases}
\end{align}
Actually, as we will see in Section \ref{subseq:stochastic}, we define $ \{ \widehat{\psi}(k,t) ; k \in \T , t \ge 0 \}$ \textcolor{black}{first} as the dynamics on $\mathbb{L}^{2}(\T)$ and then we define $\{ \widehat{p}(k,t), \widehat{q}(k,t) ; k \in \T, t \ge 0 \}$ by the equations $(\ref{defofpq})$. \textcolor{black}{Then,} $\omega(k) \sim k^{\frac{\theta - 1}{2}} , k \to 0$, $\widehat{q}(k,t)$ \textcolor{black}{is not necessarily} defined as an element of $\mathbb{L}^{2}(\T)$ \textcolor{black}{in general}. 
\end{remark}

\subsection{Stochastic noise and rigorous definition of the dynamics}\label{subseq:stochastic}

Next we add to this deterministic dynamics $(\ref{deterministic})$ a stochastic noise which conserves $p_{x-1} + p_{x} + p_{x+1}$ and $p_{x-1}^{2} + p_{x}^{2} + p_{x+1}^{2}, x \in \Z$. The corresponding stochastic differential equations are written as 
\begin{align}\label{stodiffeq:pq}
\begin{cases}
d q_{x}(t) = p_{x}(t) dt \\
d p_{x}(t) = \{ - (\a * q)_{x}(t) - \frac{\gamma}{2} (\b*p)_{x}(t) \} dt + \sqrt{\gamma} \sum_{z = -1, 0, 1} ( Y_{x+z}p_{x}(t) ) dw_{x+z},
\end{cases}
\end{align}
where $\gamma > 0$ is the strength of the noise and $Y_{x}, x \in \Z$ are vector fields defined as
\begin{align*}
Y_{x} := (p_{x} - p_{x+1})\partial_{p_{x-1}} + (p_{x+1} - p_{x-1})\partial_{p_{x}} + (p_{x-1} - p_{x})\partial_{p_{x+1}},
\end{align*} 
$\{ w_{x}(t) ; x \in \Z, t \ge 0 \} $ are i.i.d. one-dimensional standard Brownian motions, and 
\begin{align*}
\b_{x} := \begin{cases} 6 , &x = 0, \\
-2 , &x = \pm 1, \\ 
-1 , &x = \pm 2, \\ 
0 , &\textcolor{black}{\text{otherwise}}. \end{cases}
\end{align*}
In other words, $\{ p_{x}(t), q_{x}(t) ; t \ge 0 \}$ is a Markov process generated by $A + \gamma S$, where $S := \sum_{x \in \Z} (Y_{x})^{2}$. Note that $A + \gamma S$ conserves the total energy and the total momentum. 

From \eqref{stodiffeq:pq}, the time evolution of $\{ \widehat{p}(k,t), \widehat{q}(k,t) ; k \in \T, t \ge 0 \}$ is given by
\begin{align}\label{stochasticont}
\begin{cases}
d \widehat{q}(k,t) &= \widehat{p}(k,t) dt \\
d \widehat{p}(k,t) &= [ - \widehat{\a}(k) \widehat{q}(k,t) - 2\gamma R(k) \widehat{p}(k,t) ] dt \\
& \quad + 2 \sqrt{-1} \int_{\T} r(k,k') \widehat{p}(k-k',t) B(dk',dt), 
\end{cases}  
\end{align}
where 
\begin{align}\label{sqrtscatt}
r(k,k') &:= 2\sin{\pi k}^{2}\sin{2\pi(k-k')} + \sin{2\pi k}\sin{\pi(k-k')}^{2}, \\
R(k) &:= \frac{\widehat{\b}(k)}{4} = 2 \sin^{4}{\pi k} + \frac{3}{2} \sin^{2}{2 \pi k}, \label{defofmeanscat} \\
B(dk,dt) &:= \sum_{x \in \Z} e^{2 \pi k x} dk ~ w_{x}(dt) \notag.
\end{align}
Then we also define $ \{ \widehat{\psi}(k,t) ; k \in \T , t \ge 0 \}$ by $(\ref{wavefunction})$ for this stochastic system. From $(\ref{stochasticont})$, the time evolution of $ \{ \widehat{\psi}(k,t) ; k \in \T , t \ge 0 \}$ can be written as 
\begin{align}\label{defofpsi}
\begin{cases}
d \widehat{\psi}(k,t) &= [ - \sqrt{-1} \omega(k) \widehat{\psi}(k,t) - \gamma R(k) \{ \widehat{\psi}(k,t) - \widehat{\psi}^{*}(-k,t) \} ] dt \\
& ~ + \sqrt{-1} \sqrt{\gamma} \int r(k,k') [\widehat{\psi}(k-k',t) - \widehat{\psi}^{*}(k'-k,t)] dB(dk',dt) .
\end{cases}
\end{align}

\textcolor{black}{\begin{remark}
Another well-studied stochastic noise, which is local and conserves the total energy and the total momentum, is the jump-type momentum exchange noise defined as follows. We introduce an operator $S^{'}$ defined as
\begin{align*}
(S^{'}f)(p) := \sum_{x \in \Z} (f(p^{x,x+1}) - f(p))
\end{align*} 
for any bounded local function $f$ where $p^{x,x+1}$ is defined as
\begin{align*}
p^{x,x+1}_{z} := \begin{cases} p_{x+1} \quad z = x, \\
p_{x} \quad z = x+1, \\
p_{z} \quad \text{otherwise}. \end{cases}
\end{align*}
Then one can consider the Markov process $\{ p_{x}(t), q_{x}(t) ; t \ge 0 \}$ generated by $A + \gamma S^{'}$ or the corresponding stochastic differential equation for $ \{ \widehat{\psi}(k,t) ; k \in \T , t \ge 0 \}$, and then one can obtain the same scaling limit as our model because the asymptotic behavior of the mean value of the scattering kernel for the jump-type noise $R_{S^{'}}(k)$ is also $k^{2}, k \to 0$. 
\end{remark}}

Now we present the rigorous definition of the dynamics.  As we have mentioned at the beginning of this section, we define $ \{ \widehat{\psi}(k,t) \in \mathbb{L}^{2}(\T) ; k \in \T , t \ge 0 \}$ as the solution of $(\ref{defofpsi})$ with initial distribution $\mu_{0}$ where $\mu_{0}$ is an arbitrary probability measure on $\mathbb{L}^{2}(\T)$. \textcolor{black}{For $\theta > 1$, we can show the existence and uniqueness of the solution by using a classical fixed point theorem, see Appendix \ref{app:exuni} for the sketch of the proof. Once we define the dynamics $ \{ \widehat{\psi}(k,t) \in \mathbb{L}^{2}(\T) \}$, then we can also define $\{ \widehat{p}(k,t), \widehat{q}(k,t) ; k \in \T, t \ge 0 \}$ by $(\ref{defofpq})$. } Since $\widehat{q}(k)$ may not be in $\mathbb{L}^{2}(\T)$, we \textcolor{black}{can not} define $q_{x}$ as the Fourier coefficient of $\widehat{q}(k)$. But still it is sufficient to define $\sum_{x^{'}} \a_{x-x^{'}}(q_{x} - q_{x^{'}})^{2}$ and so $e_{x}(t)$ by the following argument: \textcolor{black}{suppose that} $\{ q_{x}, x \in \Z \}$ \textcolor{black}{is an} $l^{2}(\Z)$ element, then the Fourier transform of $\sum_{x^{'}} \a_{x-x^{'}}|q_{x} - q_{x^{'}}|^{2}$ is equal to 
\begin{align*}
&\frac{1}{4} \int_{\T} dk dk^{'} F(k-k^{'},k^{'}) (\widehat{\psi}(k^{'}) + \widehat{\psi}(-k^{'})^{*}) (\widehat{\psi}(k-k^{'}) + \widehat{\psi}(k^{'}-k)^{*}), \\
& \quad F(k,k^{'}) := \frac{\widehat{\a}(k+k^{'}) - \widehat{\a}(k) - \widehat{\a}(k^{'})}{\omega(k) \omega(k^{'})}.
\end{align*}
Therefore \textcolor{black}{when} we start from the wave functions to define the dynamics, we define $\sum_{x^{'}} \a_{x-x^{'}}|q_{x} - q_{x^{'}}|^{2}$ as the Fourier coefficient of the above integration:
\begin{align}\label{defofpotential}
\sum_{x^{'}} \a_{x-x^{'}}|q_{x} - q_{x^{'}}|^{2} := \frac{1}{4} \int_{\T^{2}} dk dk^{'} & e^{2 \pi \sqrt{-1} (k+k^{'}) x} F(k,k^{'}) \notag \\ 
& \times (\widehat{\psi}(k) + \widehat{\psi}(-k)^{*}) (\widehat{\psi}(k^{'}) + \widehat{\psi}(-k^{'})^{*}).
\end{align}
Then we can define the energy of particle $x$ in \textcolor{black}{the} usual way:
\begin{align*}
e_{x}(t) := \frac{1}{2} |p_{x}|^{2} - \frac{1}{4} \sum_{x' \in \Z, x' \neq x} \a_{x-x^{'}} |q_{x} - q_{x'}|^{2}. 
\end{align*}

\section{Main Result : Superdiffusive behavior of the energy}

In this section we state our main results about the scaling limit of the energy. First we introduce the Wigner distribution, which is a good substitute of the empirical measure of the energy. We show that space-time-noise-scaled Wigner distribution converges to a solution of the fractional diffusion equation, see Theorem \ref{main0}, Theorem \ref{main1} and Theorem \ref{main2}. Since the strength of the noise means the strength of some nonlinear effect, if the noise is weaker, then the scaling of the time \textcolor{black}{should be slower} (cf. Theorem \ref{main0}). Critical scaling is space : time : noise $= \epsilon:\epsilon:\epsilon$, $\epsilon \to 0$. (cf. Theorem \ref{main1}, Theorem \ref{main2}) 

\subsection{Wigner distribution(local spectral density)}\label{subseq:wigner}

Let $0 < \epsilon < 1$ be a scale parameter and $\{ \mu_{\epsilon} \}_{0< \epsilon < 1}$ be a family of probability measures on $\mathbb{L}^{2}(\T)$. We define $ \{ \widehat{\psi}(k,t) = \widehat{\psi}_{\epsilon}(k,t) \in \mathbb{L}^{2}(\T) ; k \in \T , t \ge 0 \}_{0 < \epsilon < 1}$ as the solution of $(\ref{defofpsi})$ with initial condition $\mu_{\epsilon}$ and $\gamma := \epsilon^{s} \gamma_{0}, ~ \gamma_{0} > 0 , 0 \le s \le 1$. The exponent $0 \le s \le 1$ represents the strength of the noise. If $s = 1$ (resp. $0 \le s < 1$) , then we say that the noise is weak (resp. strong). We assume the following energy bound condition, called thermal type condition (cf.\cite{KO2}):
\begin{align}\label{initialbound2}
\sup_{0 < \epsilon < 1} \int_{\T} dk ~ \epsilon^{2} | E_{\mu_{\epsilon}}[ | \widehat{\psi}(k) |^{2} ] |^{2} \le K_{1}
\end{align}
where $E_{\mu_{\epsilon}}$ is the expectation with respect to $\mu_{\epsilon}$ and $K_{1}$ is a positive constant. Note that this assumption and the energy conservation law 
\begin{align}\label{eq:energyconservation}
\mathbb{E}_{\epsilon}[ | \widehat{\psi}(k,t) |^{2} ] = E_{\mu_{\epsilon}}[ | \widehat{\psi}(k) |^{2} ] 
\end{align}
imply 
\begin{align}
&\sup_{0 < \epsilon < 1} \int_{\T} dk ~ \epsilon  \mathbb{E}_{\epsilon}[ | \widehat{\psi}(k,t) |^{2} ] = \sup_{0 < \epsilon < 1} \int_{\T} dk ~ \epsilon  E_{\mu_{\epsilon}}[ | \widehat{\psi}(k) |^{2} ] \le \sqrt{K_{1}}, \label{initialbound} 
\end{align}
for any $J \in \mathbb{S}(\R), t \ge 0$, where $\mathbb{E}_{\epsilon}$ is the expectation of the dynamics with initial condition $\mu_{\epsilon}$. We can easily show \eqref{eq:energyconservation} by substituting $p = 0$ for both sides of \eqref{startpoint} and by taking the integral with respect to $k \in \T$.

Now we define the Wigenr distribution $W_{\epsilon,+}(t) \in \mathbb{S}(\R \times \T)^{'}$, which is the local spectral density of the energy as we will see later in Remark $\ref{localspecden}$, as follows:
\begin{align}
&<W_{\epsilon,+}(t),J> \notag \\
&:= \frac{\epsilon}{2} \sum_{x,x' \in \Z} \mathbb{E}_{\epsilon} [\psi_{x^{'}}(\frac{t}{f_{\theta,s}(\epsilon)})^{*} \psi_{x}(\frac{t}{f_{\theta,s}(\epsilon)})] \int_{\T} dk ~ e^{2 \pi \sqrt{-1} (x' - x) k} J(\frac{\epsilon(x+x')}{2},k)^{*} \label{Wigner+Z} \\
&= \frac{\epsilon}{2} \int_{\R \times \T} dpdk ~ \mathbb{E}_{\epsilon} [\widehat{\psi}(k - \frac{\epsilon p}{2},\frac{t}{f_{\theta,s}(\epsilon)})^{*} \widehat{\psi}(k + \frac{\epsilon p}{2},\frac{t}{f_{\theta,s}(\epsilon)})] \widetilde{J}(p,k)^{*} \label{Wigner+T} ,
\end{align}
for $t \ge 0$, $J \in \mathbb{S}(\R \times \T)$. The ratio of space-time scaling $f_{\theta,s}(\epsilon)$ is given by 
\begin{align}\label{timescaling}
f_{\theta,s}(\epsilon) := \begin{cases} \epsilon^{\frac{6 - s(\theta - 1)}{7 - \theta}}  \quad &1 < \theta < 3 , ~ 0 \le s \le 1, \\ \epsilon^{s} |h_{s}(\epsilon)|^{3}  \quad &\theta = 3, ~ 0 \le s < 1, \\ \epsilon  \quad &\theta = 3, ~ s = 1, \\ \epsilon^{\frac{3 - s}{2}}  \quad &\theta > 3, ~ 0 \le s \le 1, \end{cases}
\end{align}
where $h_{s}(\cdot)$ is the inverse function of $y \mapsto (\frac{y^{4}}{- \log{y}})^{\frac{1}{2(1-s)}}$ on $[0,1)$. 

\begin{remark}\label{localspecden}
Actually, $W_{\epsilon,+}(t), t \ge 0$ is well-defined on a wider class of test functions than $\mathbb{S}(\R \times \T)$. If $J(y,k) = J(k), (y,k) \in \R \times \T$ and $J(k), k \in \T$ is bounded , then we can define $<W_{\epsilon,+}(t),J>$ by $(\ref{Wigner+Z})$ and we have
\begin{align}\label{energydisonT}
<W_{\epsilon,+}(t),J> = \frac{\epsilon}{2} \int_{\T} dk ~ \mathbb{E}_{\epsilon}[| \widehat{\psi}(k,\frac{t}{f_{\theta,s}(\epsilon)}) |^{2}] J(k).
\end{align}
From $(\ref{energydisonT})$, we see that the Wigner distribution has the information of the spectral density of the energy. In addition, if $J \in \mathbb{S}(\R \times \T)$ and $J(y,k) = J(y),~ (y,k) \in \R \times \T$, then we have
\begin{align}\label{energydisonR}
<W_{\epsilon,+}(t),J> = \frac{\epsilon}{2} \sum_{x \in \Z} \mathbb{E}_{\epsilon}[|\psi_{x}(\frac{t}{f_{\theta,s}(\epsilon)})|^{2}] J(\epsilon x).
\end{align}
From $(\ref{energydisonR})$, we see that the Wigner distribution is the empirical measure of $\{ |\psi_{\epsilon}(x)|^{2} ; x \in \Z \}$. According to $(\ref{energydisonT})$ and $(\ref{energydisonR})$, we can think the Wigner distribution as the local spectral density of the energy.
\end{remark}

Next proposition ensures that the limit of the Wigner distribution is the macroscopic distribuiton of the energy:
\begin{proposition}\label{energywave}
Assume $(\ref{initialbound2})$. Then for any $t \ge 0, J \in S(\R)$, we have
\begin{align*}
\varlimsup_{\epsilon \to 0} |<W_{\epsilon,+}(t),J> - \epsilon \sum_{x \in \Z} \mathbb{E}_{\epsilon}[e_{x}(\frac{t}{f_{\theta,s}(\epsilon)})] J(\epsilon x)| = 0.
\end{align*}
\end{proposition}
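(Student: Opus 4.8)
The plan is to subtract the two sums in the statement, identify the difference as a quadratic form in the wave function whose symbol vanishes on the anti-diagonal $\{k_1+k_2=0\}\subset\T^2$, and exploit that the slowly varying test function $J(\epsilon\,\cdot)$ concentrates the total wave number $k_1+k_2$ at scale $\epsilon$. First I would use Remark~\ref{localspecden} and \eqref{energydisonR} to write $\langle W_{\epsilon,+}(t),J\rangle=\tfrac{\epsilon}{2}\sum_{x}\mathbb{E}_{\epsilon}[|\psi_x(\tau_\epsilon)|^2]\,J(\epsilon x)$ with $\tau_\epsilon:=t/f_{\theta,s}(\epsilon)$. Let $\widetilde{\omega}\in\ell^2(\Z)$ be the (real, even) inverse Fourier transform of $\omega=\sqrt{\widehat{\a}}$; then $\psi_x=(\widetilde{\omega}*q)_x+\sqrt{-1}\,p_x$ with $(\widetilde{\omega}*q)_x,p_x$ real and square-summable in $x$ --- their Fourier transforms $\omega(k)\widehat{q}(k)=\tfrac12(\widehat{\psi}(k)+\widehat{\psi}(-k)^{*})$ and $\widehat{p}(k)$ lie in $\mathbb{L}^2(\T)$ by \eqref{defofpq}, even though $\widehat{q}$ need not --- so $|\psi_x|^2=(\widetilde{\omega}*q)_x^2+p_x^2$; while by the definition of $e_x$ and \eqref{defofpotential}, $e_x=\tfrac12p_x^2+v_x$ with $v_x:=-\tfrac14\sum_{x'}\a_{x-x'}|q_x-q_{x'}|^2$, a nonnegative summable potential energy density. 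The kinetic terms cancel, so with $g_x:=\tfrac12(\widetilde{\omega}*q)_x^2-v_x$ one has $\langle W_{\epsilon,+}(t),J\rangle-\epsilon\sum_{x}\mathbb{E}_{\epsilon}[e_x(\tau_\epsilon)]J(\epsilon x)=\epsilon\sum_{x}\mathbb{E}_{\epsilon}[g_x]J(\epsilon x)$, and, using \eqref{defofpq}, \eqref{defofpotential}, $\widehat{\a}=\omega^2$, and $A(k):=\widehat{\psi}(k)+\widehat{\psi}(-k)^{*}=2\omega(k)\widehat{q}(k)\in\mathbb{L}^2(\T)$,
\begin{align*}
g_x&=\frac{1}{16}\int_{\T^2}\bigl(2+F(k_1,k_2)\bigr)A(k_1)A(k_2)\,e^{2\pi\sqrt{-1}(k_1+k_2)x}\,dk_1dk_2,\\
2+F(k_1,k_2)&=\frac{\widehat{\a}(k_1+k_2)-(\omega(k_1)-\omega(k_2))^2}{\omega(k_1)\omega(k_2)}.
\end{align*}

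The key algebraic fact I would establish is that $0\le 2+F(k_1,k_2)\le 4$ for a.e.\ $(k_1,k_2)$; in particular $2+F$ vanishes on $\{k_1+k_2=0\}$ and has no singularity where $\omega(k_1)$ or $\omega(k_2)$ does. From $\a_x=-|x|^{-\theta}$ ($x\ne0$) and $\a_0=-\sum_{x\ne0}\a_x$ one has $\widehat{\a}(k)=2\sum_{x\ne0}|x|^{-\theta}\sin^2(\pi kx)$, so $\omega(k)=\|h_k\|_{\ell^2(\Z\setminus\{0\})}$ with $h_k(x):=\sqrt2\,|x|^{-\theta/2}|\sin\pi kx|$; the inequality $|\sin\pi(k_1+k_2)x|\le|\sin\pi k_1x|+|\sin\pi k_2x|$ gives $h_{k_1+k_2}\le h_{k_1}+h_{k_2}$ pointwise, hence $\omega$ is subadditive, $\omega(k_1+k_2)\le\omega(k_1)+\omega(k_2)$. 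Applying this to $k_1=(k_1+k_2)+(-k_2)$ (and using that $\omega$ is even) yields $|\omega(k_1)-\omega(k_2)|\le\omega(k_1+k_2)$, so the numerator of $2+F$ is $\ge0$; and directly $\widehat{\a}(k_1+k_2)=\omega(k_1+k_2)^2\le(\omega(k_1)+\omega(k_2))^2$, so that numerator is $\le(\omega(k_1)+\omega(k_2))^2-(\omega(k_1)-\omega(k_2))^2=4\omega(k_1)\omega(k_2)$.

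Next, expanding $J(\epsilon x)=\int_{\R}\widetilde{J}(p)\,e^{2\pi\sqrt{-1}p\epsilon x}\,dp$ and summing over $x$ (Poisson summation, legitimate since $g\in\ell^1(\Z)$), the difference equals $\int_{\R}\widetilde{J}(p)\,\Theta_\epsilon(p)\,dp$ with $\Theta_\epsilon(p):=\tfrac{\epsilon}{16}\int_{\T}\bigl(2+F(-k'-\epsilon p,k')\bigr)\,\mathbb{E}_{\epsilon}[A(-k'-\epsilon p)A(k')]\,dk'$ (arguments mod $1$, $A$ at time $\tau_\epsilon$). By $0\le 2+F\le4$, Cauchy--Schwarz in $k'$, $\mathbb{E}_{\epsilon}[|A(k)|^2]\le2\mathbb{E}_{\epsilon}[|\widehat{\psi}(k)|^2]+2\mathbb{E}_{\epsilon}[|\widehat{\psi}(-k)|^2]$, and \eqref{initialbound}, one gets $|\Theta_\epsilon(p)|\le\sqrt{K_1}$ uniformly in $\epsilon,p$, so $\sqrt{K_1}\,|\widetilde{J}(p)|$ dominates the integrand and it remains to prove $\Theta_\epsilon(p)\to0$ for each fixed $p$. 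Set $m_\epsilon(k):=\mathbb{E}_{\epsilon}[|\widehat{\psi}(k,\tau_\epsilon)|^2]+\mathbb{E}_{\epsilon}[|\widehat{\psi}(-k,\tau_\epsilon)|^2]$; by the conservation law \eqref{eq:energyconservation} this equals the same expression under $\mu_\epsilon$, so \eqref{initialbound2} gives $\|\epsilon\,m_\epsilon\|_{\mathbb{L}^2(\T)}\le2\sqrt{K_1}$ and \eqref{initialbound} gives $\|\epsilon\,m_\epsilon\|_{\mathbb{L}^1(\T)}\le2\sqrt{K_1}$, uniformly in $\epsilon$ (so the time scaling $f_{\theta,s}$ plays no role). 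By Cauchy--Schwarz, $|\Theta_\epsilon(p)|\le C\int_{\T}(2+F(-k'-\epsilon p,k'))\,(\epsilon m_\epsilon(-k'-\epsilon p))^{1/2}(\epsilon m_\epsilon(k'))^{1/2}\,dk'$, and I would split this at $|k'|=\delta$. On $\{|k'|\ge\delta\}$, since $\widehat{\a},\omega$ are continuous on $\T$ with $\widehat{\a}(0)=0$ and $\omega\ge\omega(\delta)>0$ there, $\sup_{|k'|\ge\delta}(2+F(-k'-\epsilon p,k'))\to0$ as $\epsilon\to0$, so by the $\mathbb{L}^1$-bound this part tends to $0$ for fixed $\delta$; on $\{|k'|<\delta\}$, using $2+F\le4$, Cauchy--Schwarz and the $\mathbb{L}^2$-bound (the set has measure $2\delta$), this part is $\le C\sqrt{K_1}\,\delta^{1/2}$ uniformly in $\epsilon,p$. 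Hence $\varlimsup_{\epsilon\to0}|\Theta_\epsilon(p)|\le C\sqrt{K_1}\,\delta^{1/2}$ for every $\delta>0$, so it is $0$, and dominated convergence completes the proof.

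The hard part is the $\{|k'|<\delta\}$ estimate in the last step: it is the only place where the full hypothesis \eqref{initialbound2} (rather than merely the weaker \eqref{initialbound}) enters, because an $\mathbb{L}^1$ bound on $\epsilon\,\mathbb{E}_{\epsilon}[|\widehat{\psi}|^2]$ would not prevent the energy from concentrating near $k'=0$ as $\epsilon\to0$, and it is precisely the $\mathbb{L}^2$ control that one trades against the small measure of the neighbourhood. The other substantive ingredient is the bound $0\le 2+F\le4$, which makes $2+F$ harmless near $\{\omega(k_1)\omega(k_2)=0\}$ and supplies the dominating function; its proof reduces to the subadditivity of $\omega$ and is short, but is the real content. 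The apparent difficulty that $\widehat{q}\notin\mathbb{L}^2(\T)$ never intervenes, since the entire argument is carried out through $A=2\omega\widehat{q}\in\mathbb{L}^2(\T)$.
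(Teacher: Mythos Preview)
Your proof is correct, and its overall architecture coincides with the paper's: both rewrite the difference as $\tfrac{\epsilon}{16}\int_{\T} F_1(k,-k-\epsilon p)\,\mathbb{E}_\epsilon[A(-k-\epsilon p)A(k)]\,dk$ with $F_1=2+F$, integrate against $\widetilde J(p)$, and finish by Cauchy--Schwarz together with the $\mathbb{L}^2$ hypothesis \eqref{initialbound2}. The genuine difference is in the key lemma controlling $F_1$.

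The paper proves $|\widehat{\a}(k+k')-\widehat{\a}(k)-\widehat{\a}(k')|\lesssim\omega(k)\omega(k')$ by a direct estimate of $\sum_{x\ge1}|x|^{-\theta}|\sin\pi kx\,\sin\pi k'x|$, which forces a case analysis in $\theta$ (and for $\theta=3$ the resulting bound on $F_1$ is not uniform, only $\mathbb{L}^2$-integrable in $k$). Your route is cleaner: writing $\omega(k)=\|h_k\|_{\ell^2}$ with $h_k(x)=\sqrt2\,|x|^{-\theta/2}|\sin\pi kx|$ and using the pointwise inequality $|\sin\pi(k_1+k_2)x|\le|\sin\pi k_1x|+|\sin\pi k_2x|$ gives subadditivity of $\omega$, from which $0\le 2+F\le 4$ follows in one line, uniformly for \emph{all} $\theta>1$ and with no case distinction. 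This buys you a genuinely simpler proof, and the same observation would also streamline the paper's argument (since $|F_1|\le4$ already gives $\int_{\T}|F_1(k,-k-\epsilon p)|^2\,dk\le16$). A further small simplification on your side is that you invoke the pointwise energy conservation \eqref{eq:energyconservation} directly to transport the $\mathbb{L}^2$ bound on $\epsilon\,m_\epsilon$ to positive times, whereas the paper appeals to Proposition~\ref{forlaplace} at $p=0$. Your final $|k'|\lessgtr\delta$ splitting is a minor reorganisation of the paper's dominated-convergence step, and has the virtue of isolating exactly where the full assumption \eqref{initialbound2} (rather than the weaker \eqref{initialbound}) is needed.
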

We postpone the proof of this proposition to Appendix \ref{app:energywave}.

\subsection{Superdiffusive behavior of the energy : direct limit}

Now we state one of our main results.

\begin{theorem}\label{main0}
Suppose that $\theta > 2$, $0 \le s < 1$, $(\ref{initialbound2})$ and there exists some $W_{0} \in \mathbb{L}^{1}(\R)$ such that
\begin{align}\label{initialconvergence}
\lim_{\epsilon \to 0} <W_{\epsilon,+}(0),J> = \int_{\R} dy ~ W_{0}(y) J(y), 
\end{align}
for $J(y,k) \equiv J(y) \in \mathbb{S}(\R \times \T)$. Then for $J \in C_{0}^{\infty}(\R \times \R_{\ge 0})$, we have 
\begin{align*}
\lim_{\epsilon \to 0} \int_{0}^{\infty} dt ~ <W_{\epsilon,+}(t),J(\cdot,t)> = \int_{\R} dy \int_{0}^{\infty} dt ~ W(y,t) J(y,t) ,
\end{align*}
where $W(y,t)$ is given by
\begin{align*}
\widetilde{W}(p,t) = \begin{cases} \exp{ \{ - C_{\theta,\gamma_{0}} |p|^{\frac{6}{7 - \theta}} t \} } \widetilde{W}_{0}(p)  \quad &2 < \theta \le 3, \\ \exp{ \{ - C_{\theta,\gamma_{0}} |p|^{\frac{3}{2}} t \} } \widetilde{W}_{0}(p) \quad &\theta > 3 \end{cases}
\end{align*}
and 
\begin{align}\label{diffcoeffi}
C_{\theta,\gamma_{0}} &:= \begin{cases} \frac{24 \pi^{3} \csc{\frac{(4-\theta)\pi}{7 - \theta}}}{7 -\theta} (\frac{(\theta - 1)}{24 \pi^{2}})^{\frac{6}{7-\theta}} (\gamma_{0})^{-\frac{\theta - 1}{7 - \theta}} C(\theta)^{\frac{3}{7-\theta}}  \quad &2 < \theta \le 3, \\ \frac{\sqrt{6}}{12} \gamma_{0}^{-\frac{1}{2}} C(\theta)^{\frac{3}{4}} \quad &\theta > 3, \end{cases} \\
C(\theta) &:= \begin{cases} 4 \pi^{\theta - 1} \int_{0}^{\infty} dy ~ \frac{\sin^{2} y}{|y|^{\theta}} \quad &1 < \theta < 3, \\
4 \pi^{2} \quad &\theta = 3, \\
4 \pi^{2} \sum_{x \ge 1} |x|^{2 - \theta} \quad &\theta > 3. \end{cases} \label{diffcoeffi2}
\end{align}
In other words, $W(y,t)$ satisfies a fractional diffusion equation:
\begin{align*}
\partial_{t} W(y,t) = \begin{cases} - (2\pi)^{- \frac{6}{7 - \theta}} C_{\theta,\gamma_{0}} (-\Delta)^{\frac{3}{7-\theta}} W(y,t) \quad &2 < \theta \le 3, \\ - (2 \pi)^{- \frac{3}{2}} C_{\theta,\gamma_{0}} (-\Delta)^{\frac{3}{4}} W(y,t) \quad &\theta > 3. \end{cases}
\end{align*}

\end{theorem}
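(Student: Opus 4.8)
The plan is to adapt the Fourier--Laplace (resolvent) scheme of \cite{JKO} for the exponential model; the new feature to handle is the singularity of $\omega$ and $\omega'$ at $k=0$ produced by the polynomial tail, together with the discontinuity of $\widehat{\a}''$ when $\theta\le3$.

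\textbf{Step 1 (Wigner evolution).} First I would apply It\^o's formula to \eqref{defofpsi} for the products $\widehat{\psi}(k-\tfrac{\epsilon p}{2},\cdot)^{*}\widehat{\psi}(k+\tfrac{\epsilon p}{2},\cdot)$ and for the ``antidiagonal'' products $\widehat{\psi}(\cdot)\widehat{\psi}(\cdot)$, $\widehat{\psi}(\cdot)^{*}\widehat{\psi}(\cdot)^{*}$, obtaining a closed linear system for $\widehat{W}_{\epsilon,+}(p,k,t):=\tfrac{\epsilon}{2}\mathbb{E}_{\epsilon}[\widehat{\psi}(k-\tfrac{\epsilon p}{2},\tfrac{t}{f_{\theta,s}(\epsilon)})^{*}\widehat{\psi}(k+\tfrac{\epsilon p}{2},\tfrac{t}{f_{\theta,s}(\epsilon)})]$ and the antidiagonal Wigner functions. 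In this system the dispersive part of \eqref{defofpsi} contributes a transport term with frequency $f_{\theta,s}(\epsilon)^{-1}\delta_{\epsilon}\omega(k)$, where $\delta_{\epsilon}\omega(k):=\omega(k+\tfrac{\epsilon p}{2})-\omega(k-\tfrac{\epsilon p}{2})$, while the noise part contributes $f_{\theta,s}(\epsilon)^{-1}\epsilon^{s}\gamma_{0}$ times the scattering operator $\mathcal{L}$ with kernel built from $r(k,k')$, $R(k)$ of \eqref{sqrtscatt}--\eqref{defofmeanscat}. The antidiagonal functions carry an extra oscillatory factor which, combined with the a priori bound \eqref{initialbound} and the scale separation forced by the assumption $s<1$, makes their contribution to $\langle W_{\epsilon,+}(t),J\rangle$ vanish as $\epsilon\to0$; this is where $s<1$ enters.

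\textbf{Step 2 (resolvent equation and homogenization).} Next I would take, for $\lambda>0$, the Laplace transform $w_{\epsilon}(\lambda,p,k):=\int_{0}^{\infty}e^{-\lambda t}\widehat{W}_{\epsilon,+}(p,k,t)\,dt$. The surviving equation is
\[
\Big(\lambda+\tfrac{\sqrt{-1}}{f_{\theta,s}(\epsilon)}\delta_{\epsilon}\omega(k)+\tfrac{\epsilon^{s}\gamma_{0}}{f_{\theta,s}(\epsilon)}R(k)\Big)w_{\epsilon}(\lambda,p,k)=\widehat{W}_{\epsilon,+}(p,k,0)+\tfrac{\epsilon^{s}\gamma_{0}}{f_{\theta,s}(\epsilon)}\!\int_{\T}R(k,k')\,w_{\epsilon}(\lambda,p,k')\,dk'+\mathrm{err}_{\epsilon}.
\]
Integrating over $k\in\T$ and using $\int_{\T}(\mathcal{L}g)(k)\,dk=0$ gives, with $\rho_{\epsilon}(\lambda,p):=\int_{\T}w_{\epsilon}(\lambda,p,k)\,dk$,
\[
\lambda\rho_{\epsilon}(\lambda,p)-\int_{\T}\widehat{W}_{\epsilon,+}(p,k,0)\,dk=-\tfrac{\sqrt{-1}}{f_{\theta,s}(\epsilon)}\int_{\T}\delta_{\epsilon}\omega(k)\,w_{\epsilon}(\lambda,p,k)\,dk+\mathrm{err}_{\epsilon}.
\]
Since $\omega$ is even, $\delta_{\epsilon}\omega$ is odd and $R$ is even in $k$; splitting $w_{\epsilon}$ into even and odd parts and solving the resulting $2\times2$ system to leading order, the off-equilibrium (non-constant in $k$) part of $w_{\epsilon}$ is small in the relevant norm thanks to the coercivity of $\mathcal{L}$ on the orthogonal complement of constants together with \eqref{initialbound}, so the right-hand side closes as $-D_{\epsilon}(\lambda,p)\,\rho_{\epsilon}(\lambda,p)+\mathrm{err}_{\epsilon}$ with
\[
D_{\epsilon}(\lambda,p)\approx\frac{1}{f_{\theta,s}(\epsilon)^{2}}\int_{\T}\frac{\tfrac{\epsilon^{s}\gamma_{0}}{f_{\theta,s}(\epsilon)}R(k)\,(\delta_{\epsilon}\omega(k))^{2}}{\big(\lambda+\tfrac{\epsilon^{s}\gamma_{0}}{f_{\theta,s}(\epsilon)}R(k)\big)^{2}+f_{\theta,s}(\epsilon)^{-2}(\delta_{\epsilon}\omega(k))^{2}}\,dk .
\]

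\textbf{Step 3 (asymptotics and conclusion).} Then I would compute $\lim_{\epsilon\to0}D_{\epsilon}(\lambda,p)$: replace $\delta_{\epsilon}\omega(k)$ by $\epsilon p\,\omega'(k)$, insert the $k\to0$ asymptotics $R(k)\sim C(\theta)(2\pi)^{-2}k^{2}$ and $\omega'(k)\sim|k|^{-(3-\theta)/2}$ (resp.\ $\sqrt{-\log k}$ at $\theta=3$, resp.\ a positive constant for $\theta>3$) established in Section 3, use the definition \eqref{timescaling} of $f_{\theta,s}$, and rescale $k=k_{\epsilon}u$ at the mesoscopic scale $k_{\epsilon}\to0$ at which transport and damping balance; by dominated convergence the integral concentrates on $|k|\sim k_{\epsilon}$ and tends to a constant multiple of $|p|^{6/(7-\theta)}$ (resp.\ $|p|^{3/2}$), the constant being $C_{\theta,\gamma_{0}}$ of \eqref{diffcoeffi}, the $\csc$ factor coming from reducing the $u$-integral to $\int_{0}^{\infty}x^{\sigma-1}(1+x)^{-1}\,dx=\pi/\sin\pi\sigma$. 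Hence $\rho_{\epsilon}(\lambda,p)\to(\lambda+C_{\theta,\gamma_{0}}|p|^{6/(7-\theta)})^{-1}\widetilde{W}_{0}(p)$ (resp.\ with exponent $3/2$). Since for $J\in C_{0}^{\infty}(\R\times\R_{\ge0})$ and fixed $t$ the test function $J(\cdot,t)\in\mathbb{S}(\R)$, formula \eqref{Wigner+T} integrates out $k$, and the uniform bound \eqref{initialbound} upgrades the pointwise-in-$\lambda$ convergence of $\rho_{\epsilon}$ to convergence of $\int_{0}^{\infty}\langle W_{\epsilon,+}(t),J(\cdot,t)\rangle\,dt$ to $\int_{\R}dy\int_{0}^{\infty}dt\,W(y,t)J(y,t)$ with $W$ as in the statement, which is the stated fractional diffusion equation.

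\textbf{Main obstacle.} The hard part is Steps 2--3 near $k=0$ when $2<\theta\le3$: there the transport frequency $f_{\theta,s}(\epsilon)^{-1}\delta_{\epsilon}\omega$ is singular while the damping $f_{\theta,s}(\epsilon)^{-1}\epsilon^{s}R(k)$ is degenerate, so the coercivity estimate for the off-equilibrium part of $w_{\epsilon}$ and the replacement of $\delta_{\epsilon}\omega(k)$ by $\epsilon p\,\omega'(k)$ must be carried out with quantitative error control on $\{|k|\lesssim k_{\epsilon}\}$ and $\{|k|\gtrsim k_{\epsilon}\}$ separately, using only the precise $k\to0$ asymptotics of $\widehat{\a}(k)$ and $\widehat{\a}'(k)$ (which is exactly why $\theta>2$ is needed). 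Making every $\mathrm{err}_{\epsilon}$ term --- the It\^o corrections, the antidiagonal Wigner functions, and the non-closed part of the gain term --- genuinely negligible uniformly in $k$ is the technical core, and it is precisely the thermal-type bound \eqref{initialbound2} that supplies the $\mathbb{L}^{2}$ control needed.
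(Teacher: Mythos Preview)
Your proposal follows essentially the same resolvent/Laplace--Fourier scheme as the paper's Section~5 (itself adapted from \cite{JKO}): derive a closed linear system for the Wigner and anti-Wigner transforms, take Laplace transforms, show homogenization in $k$, and identify the limiting symbol via the integral you call $D_{\epsilon}(\lambda,p)$, which is exactly the paper's $I_{\epsilon}(p,\lambda)$ in the proof of Proposition~\ref{prop:coefficient}. The rescaling and residue computation you sketch in Step~3 match the paper's change of variables $k\mapsto k_{\epsilon}$ and the identity $\int_{\R}\frac{k^{2}}{k^{4}+1}|k|^{-\tau}\,dk=\tfrac{\pi}{2}\csc(\tfrac{\pi\tau}{4}+\tfrac{\pi}{4})$.

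Two points where your sketch diverges from what the paper actually does, and where you should tighten the argument. First, the closure in Step~2 is not obtained from a generic spectral-gap/coercivity estimate for $\mathcal{L}$; the paper exploits the explicit rank-two factorization $R(k,k')=\tfrac{3}{4}\sum_{i=1,2}\mathbf{e}_{i}(k)\mathbf{e}_{3-i}(k')$ (Lemma~\ref{expressionR}), so that the gain term in the resolvent equation involves only the two scalars $w_{\epsilon,i}(p,\lambda)=\int_{\T}\mathbf{e}_{i}(k)\bar{w}_{\epsilon,+}(p,k,\lambda)\,dk$. This is what produces the clean expression \eqref{superdiffco} for $a_{\epsilon}$ and makes the computation of the limiting symbol exact rather than perturbative. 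A coercivity argument alone would not immediately give the sharp constant $C_{\theta,\gamma_{0}}$. Second, the anti-Wigner terms (the paper's $\bar{u}_{\epsilon,\pm}$) are not killed merely by oscillation: their control requires the weighted $L^{2}$ estimate of Proposition~\ref{wignerestimate}, in particular the bound on $\int_{\T}R(k)\big(\tfrac{\bar{\omega}_{\epsilon}}{\lambda f_{\theta,s}(\epsilon)+2\gamma R}\big)^{2}|\bar{u}_{\epsilon,+}|^{2}\,dk$, which is where the thermal-type assumption \eqref{initialbound2} and the $L^{2}$ bound of Proposition~\ref{forlaplace} enter.

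One small slip: in Step~3 you write $R(k)\sim C(\theta)(2\pi)^{-2}k^{2}$, but $C(\theta)$ is the constant governing the dispersion $\widehat{\a}(k)\sim C(\theta)|k|^{\theta-1}$ (Lemma~\ref{asympofa}); the noise kernel has $R(k)\sim 6\pi^{2}k^{2}$ from \eqref{defofmeanscat}, independent of $\theta$. Keep these two asymptotics separate when you do the rescaling, since both enter the final constant.
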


\begin{remark}\label{re:seqcompact}

From $(\ref{initialbound})$, we have the uniform boundness of the Wigner distribution:
\begin{align}\label{sequentialcompact}
\frac{|<W_{\epsilon,+}(t),J>|}{\| J \|} \le \frac{\sqrt{K_{1}}}{2} \quad 0 < \epsilon < 1, t \ge 0, J \in \mathbb{S}(\R \times \T).
\end{align}
Thanks to $(\ref{sequentialcompact})$, we obtain the $\star$ - weakly sequential compactness of the Wigner distribution in $\mathbb{L}^{\infty}([0,T] \times \mathbb{S})$ for any $T > 0$ , that is, there exists some subsequence $\{ <W_{\epsilon(n),+}(t),\cdot> \}_{n}$ and an element $<W(t),\cdot> \in \mathbb{S}^{'}(\R \times \T)$ such that
\begin{align*}
\lim_{n \to \infty} | \int_{0}^{T} dt ~ <W_{\epsilon(n),+}(t),J> f(t) - \int_{0}^{T} dt ~ <W(t),J> f(t) | = 0
\end{align*}
for any $J \in \mathbb{S}(\R \times \T)$, $f \in \mathbb{L}^{1}([0,T])$. Therefore if we verify the uniqueness of limits of convergent subsequences, then we can conclude the proof of Theorem $\ref{main0}$. \textcolor{black}{Note that any limit of a convergent subsequence $<W(t),\cdot>$ is positive a.e. $t$ and we can extend $<W(t),\cdot>$ to a finite positive measure $\mu(t)(dy)$ on $\R$, see Section $\ref{subsubsq:extension}$. }
\end{remark}

From Theorem $\ref{main0}$ and Proposition $\ref{energywave}$ we conclude that when $2 < \theta \le 3$, the time evolution of the macroscopic energy distribution is governed by $\frac{3}{7 - \theta}$-fractional diffusion equation.

\begin{remark}
\textcolor{black}{We can also consider pinned models and for these models the interaction potential $\a$ is defined as follows:
\begin{align*}
\a_{x} := - |x|^{- \theta} , \quad x \in \Z \setminus \{ 0 \} \quad \a_{0} := \nu + 2 \sum_{x \in \N} |x|^{- \theta} \quad \nu > 0, \theta > 1.
\end{align*}
The definition of the wave function and the Wigner distribution are same as unpinned chains, but the time scaling $f_{\theta,s}(\epsilon) = f_{\theta,\nu,s}(\epsilon)$ is changed as
\begin{align*}
f_{\theta,\nu,s}(\epsilon) := \begin{cases} \epsilon^{\frac{3 - s(\theta - 1)}{4 - \theta}}  \quad &2 < \theta < \frac{5}{2} , ~ 0 \le s < 1, \\ \epsilon^{2-s} \log{\epsilon^{-1}}  \quad &\theta = \frac{5}{2}, ~ 0 \le s < 1, \\ \epsilon^{2 - s}  \quad &\theta > \frac{5}{2}, ~ 0 \le s < 1. \end{cases}
\end{align*}}
 
\textcolor{black}{Assume that $(\theta,s) \in (2,\frac{5}{2}] \times [0,1) \cup (\frac{5}{2},\infty) \times (0,1)$. Then by using the same strategy of Section \ref{sec:proof0}, we have the direct limit for the Wigner distribution as follows:
\begin{align*}
\lim_{\epsilon \to 0} \int_{0}^{\infty} dt ~ <W_{\epsilon,+}(t),J(\cdot,t)> = \int_{\R} dy \int_{0}^{\infty} dt ~ W(y,t) J(y,t) ,
\end{align*}
where $W(y,t)$ is given by
\begin{align*}
\widetilde{W}(p,t) = \begin{cases} \exp{ \{ - C_{\theta,\nu,\gamma_{0}} |p|^{\frac{3}{4 - \theta}} t \} } \widetilde{W}_{0}(p)  \quad &2 < \theta \le \frac{5}{2}, \\ \exp{ \{ - C_{\theta,\nu,\gamma_{0}} |p|^{2} t \} } \widetilde{W}_{0}(p) \quad &\theta > \frac{5}{2} \end{cases}
\end{align*}
and 
\begin{align*}
C_{\theta,\nu,\gamma_{0}} &:= \begin{cases} \frac{12 \pi^{3} \csc{\frac{3(\theta - 2)\pi}{4(4 - \theta)} + \frac{\pi}{4} }}{4 - \theta} (\frac{C(\theta)}{24 \sqrt{\nu} \pi})^{\frac{3}{4 - \theta}} \gamma^{- \frac{\theta - 1}{4 - \theta}}  \quad &2 < \theta \le \frac{5}{2}, \\ \gamma_{0}^{-1} \int_{\T} dk ~ \frac{(\omega^{'}(k))^{2}}{2R(k)} \quad &\theta > \frac{5}{2}. \end{cases}  
\end{align*}}

\textcolor{black}{Although the chain is pinned, we see that if $2 < \theta \le \frac{5}{2}$ then the macroscopic energy behavior is anomalous, and  this result also agrees with the result of \cite{TS}. Since we want to avoid complicated division into cases, in this paper we do not give the proof for pinned chains. Note that we can also consider the cases $(\theta,s) \in (\frac{5}{2},\infty) \times \{ 0 \}$, and with some additional computations we can prove that the macroscopic energy behavior is normal. Since in this cases $f_{\theta,\nu,0}(\epsilon) = \epsilon ^{2}$, we have to expand the functions $R(k,k^{'},\epsilon p), R(k \pm \frac{\epsilon p}{2})$ appeared in $\eqref{startpoint}$ with respect to $\epsilon p$ up to the third order. }
\end{remark}

\begin{remark}
It is impossible for us to consider the case $1 < \theta \le 2$ by using the strategy of \cite{JKO} because their proof relies only on the asymptotic behavior of the sound speed at $k = 0$. In the case $1 < \theta \le 2$, \textcolor{black}{$\widehat{\a}^{'}(k)$ and $\omega^{'}(k)$ are not continuous functions on $\T \setminus \{ 0 \}$.} 
\end{remark}

\subsection{Derivation of the Boltzmann equation : micro to meso}

If $s=1$ we need the two-step scaling limit, one is microscopic scale to mesoscopic scale, then mesoscopic scale to macroscopic scale, to derive the fractional diffusion. 
\begin{theorem}\label{main1}
Suppose that $\theta > 2$, $s=1$ and there exists a finite measure $\mu_{0}$ on $\R \times \T$ such that
\begin{align}\label{ass:initialconvergence2}
\lim_{\epsilon \to 0} <W_{\epsilon,+}(0),J> = \int_{\R \times \T} d\mu_{0} J(y,k)
\end{align}
for any $J \in \mathbb{S}(\R \times \T)$. 
Then we have the following results:

\noindent $(1)$ For any $t \ge 0$, there exists $W(t) \in \mathbb{S}(\R \times \T)^{'}$ such that 
\begin{align*}
\lim_{\epsilon \to 0} |<W(t),J> - <W_{\epsilon,+},J>| = 0,
\end{align*}
for any $J \in \mathbb{S}(\R \times \T)$.

\noindent $(2)$ $W(t)$ can be extended to a finite measure $\mu(t)$ on $\R \times \T$.

\noindent $(3)$ $\mu(t)$ is the unique solution of the following measure-valued Boltzmann equation:
\begin{align}\label{boltzmann}
\partial_{t} \int_{\R \times \T} d \mu(t) J(y,k) &= \frac{1}{2 \pi} \int_{\R \times \T} d \mu(t) \omega^{'}(k) \partial_{y} J(y,k) + \gamma_{0} \int_{\R \times \T} d \mu(t) (\mathcal{L}J)(y,k) , \notag \\
\int_{\R \times \T} d \mu(0) J(y,k) &= \int_{\R \times \T} d \mu_{0} J(y,k) \quad \quad \mu(t)(dy,\{ 0 \}) = 0,
\end{align}
for any $J \in \mathbb{S}(\R \times \T_{0})$, where $\mathcal{L}$ is a scattering operator defined as
\begin{align}
(\mathcal{L}J)(y,k) &:= 2 \int_{\T} dk' ~ R(k,k') (J(y,k') - J(y,k)) , \label{jumpop} \\
R(k,k') &:= \frac{1}{2} [ r(k,k+k')^{2} + r(k,k-k')^{2} ] \label{jumpkernel}.
\end{align}
\end{theorem}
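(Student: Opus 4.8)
The plan is to follow the standard two-step kinetic-limit strategy adapted to the long-range dispersion relation $\omega(k)=\sqrt{\widehat{\a}(k)}$. The starting point is the evolution equation for the Fourier-Wigner transform; differentiating $\langle W_{\e,+}(t),J\rangle$ in $t$ using the SDE \eqref{defofpsi} produces (after taking expectations) a closed hierarchy in which the anti-diagonal terms $\mathbb{E}_\e[\widehat\psi(k-\tfrac{\e p}{2})\widehat\psi(k+\tfrac{\e p}{2})]$ (as opposed to $\widehat\psi^*\widehat\psi$) appear. One first shows, exactly as in \cite{JKO,BOS,KJO}, that these non-conjugated correlators are negligible in the limit: they carry a fast oscillating phase $e^{\pm 2\sqrt{-1}\omega(k)t/f_{\theta,1}(\e)}$ and are damped by the scattering term $\gamma_0 R(k)$, so an integration by parts in $t$ (or a resolvent estimate) together with the $\mathbb{L}^2$ bound \eqref{initialbound} kills them. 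The outcome is that $\langle W_{\e,+}(t),J\rangle$ satisfies, up to $o_\e(1)$, the weak equation
\begin{align*}
\partial_t\langle W_{\e,+}(t),J\rangle = \frac{1}{2\pi}\langle W_{\e,+}(t),\omega'(k)\partial_y J\rangle + \gamma_0\langle W_{\e,+}(t),\mathcal{L}J\rangle + o_\e(1),
\end{align*}
valid for $J\in\mathbb{S}(\R\times\T_0)$, where the factor $\e/f_{\theta,1}(\e)=1$ since $s=1$ forces $f_{\theta,1}(\e)=\e$ in every regime of \eqref{timescaling} (this is precisely the ``critical scaling $\e:\e:\e$'' remark). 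The collision kernel $R(k,k')$ in \eqref{jumpkernel} emerges from the noise covariance built out of $r(k,k')$ in \eqref{sqrtscatt} after symmetrizing over $\pm k'$.

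Next I would establish parts (1) and (2): by Remark \ref{re:seqcompact} the family $\{\langle W_{\e,+}(t),\cdot\rangle\}$ is $\star$-weakly sequentially compact in $\mathbb{L}^\infty([0,T],\mathbb{S}')$, and any subsequential limit $W(t)$ is a positive distribution, hence extends to a finite positive measure $\mu(t)$ on $\R\times\T$ (for the extension argument the paper refers to Section \ref{subsubsq:extension}). Passing to the limit in the weak equation above along such a subsequence gives that $\mu(t)$ solves \eqref{boltzmann} for test functions $J\in\mathbb{S}(\R\times\T_0)$; the boundary condition $\mu(t)(dy,\{0\})=0$ must be proved separately, by showing that no mass concentrates at $k=0$ in the limit — this is where the assumption $\theta>2$ enters, via the asymptotics $\omega'(k)\sim |k|^{-(3-\theta)/2}$ (or $\sqrt{-\log k}$, or bounded) together with $R(k)\sim k^2$, which guarantee that the transport and scattering terms control a neighborhood of $k=0$ uniformly; one uses a cutoff test function supported near $0$ and the $\mathbb{L}^2$ bound to see its $\mu(t)$-mass vanishes. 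Finally, once we know $\mu(t)$ is supported off $\{0\}$ and solves \eqref{boltzmann}, uniqueness follows from a Gronwall / duality argument: the operator $J\mapsto \frac{1}{2\pi}\omega'(k)\partial_y J+\gamma_0\mathcal{L}J$ generates a (sub-Markovian, after accounting for the transport part via characteristics) semigroup on a suitable space of test functions decaying at $k=0$, and two solutions with the same initial data must coincide by testing against the backward evolution; since the limit is unique, the whole family converges, giving (1).

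The main obstacle I anticipate is the control near the singular point $k=0$, both in the elimination of the oscillatory anti-diagonal terms and in proving $\mu(t)(dy,\{0\})=0$. When $2<\theta\le 3$ the group velocity $\omega'(k)$ blows up at $k=0$, so the transport term is not a bounded perturbation and the naive integration-by-parts in $t$ picks up a non-integrable factor unless one simultaneously exploits the lower bound on the damping $R(k)\gtrsim k^2$ and the fact that the measure $\e\,\mathbb{E}_\e[|\widehat\psi(k,t)|^2]\,dk$ has bounded mass (from \eqref{initialbound}). Quantitatively one needs that $\int_\T \frac{R(k)\,(\omega'(k))^2}{R(k)^2+\e^2(\omega'(k))^2}\,dk$ stays under control at the relevant $\e$-scale — the divergence/convergence dichotomy highlighted in the introduction — and combining this with the $\mathbb{L}^2$ bound to show that the contribution of any fixed neighborhood of $k=0$ is $o_\e(1)$ uniformly in $t$ is the delicate point. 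Everything else (the compactness, the algebraic identification of $R(k,k')$, the uniqueness via duality) is routine modulo careful bookkeeping with the scaling function $f_{\theta,1}(\e)=\e$.
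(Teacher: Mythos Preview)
Your outline matches the paper's proof in structure: compactness and positivity for (1),(2) as in Section~6.1, elimination of the anti-Wigner terms $Y_{\epsilon,\pm}$ via the fast phase $2\bar\omega_\epsilon/\epsilon$ and time-integration as in (\ref{awigdisevo})--(\ref{disappaWigner}), and uniqueness via characteristics plus Gronwall as in Section~6.2.2.

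Your anticipated main obstacle, however, is misplaced. The integral $\int_\T \frac{R(k)(\omega'(k))^2}{R(k)^2+\epsilon^2(\omega'(k))^2}\,dk$ does not appear in this proof at all; it belongs to Theorem~\ref{main0} (the direct limit with $s<1$), specifically to (\ref{mainterm2}). Here $s=1$ gives the exactly critical scaling $\epsilon/f_{\theta,1}(\epsilon)=1$, so the transport term carries coefficient $1$ and no such balancing is needed. The actual role of $\theta>2$ is in Lemma~\ref{replacelemma}: one needs $\int_\T|\omega'(k)|^2\,dk<\infty$ (i.e.\ $|k|^{-(3-\theta)}$ integrable) to replace $\delta_\epsilon\omega$ by $p\,\omega'(k)$ via dominated convergence and to make sense of $\langle W_{\epsilon,+},\omega'\partial_yJ\rangle$. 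The no-mass-at-$0$ statement is handled much more simply than you suggest: test against a bump $f^{\lambda,0,r}(k)$ supported in $|k|\le r$ and use the uniform $\mathbb{L}^2(\T)$ bound from Proposition~\ref{forlaplace} with $p=0$ to get $|\langle W_{\epsilon,+}(t),J(y)f^{\lambda,0,r}(k)\rangle|\lesssim \|J\|\,r$; no interplay between $\omega'$ and $R$ is needed. One point you do not mention, and which is the actual subtlety near $k=0$, is how to pass from $\int_0^T\langle Y_{\epsilon,\pm},\bar\omega_\epsilon J\rangle\,dt\to 0$ to $\int_0^T\langle Y_{\epsilon,\pm},\mathcal{L}J\rangle\,dt\to 0$: the paper observes that $\mathcal{L}J/\omega$ and $\mathcal{R}_0 J/\omega$ remain in $\mathbb{S}(\R\times\T)$ (the kernel $R(k,k')$ vanishes fast enough at $k=0$), so the first estimate can simply be reapplied with $J$ replaced by $\mathcal{L}J/\omega$.
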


We usually expect that the limit of the local spectral density homogenize on $\T$ due to the scattering effect. However, $\mu(t)$ does not homogenize on $\T$ in the critical case $s=1$. This implies that there should exist a longer time scaling on which the homogenization on $\T$ occurs and the time evolution of the energy on $\R$ is non-trivial, and leads us to the problem of the scaling limit for the rescaled solution of \eqref{boltzmann}.

\subsection{Derivation of fractional diffusion equation : meso to macro}

In this subsection we construct a solution of $(\ref{boltzmann})$ probabilistically. Let $\{ K_{n} \in \T ; n \in \Z_{\ge 0} \}$ be a Markov chain whose transition probability $P(k,dk')$ is given by
\begin{align*}
P(k,dk') := \frac{R(k,k')}{R(k)} dk' .
\end{align*}
The invariant probability measure $\pi$ for $\{ K_{n} \in \T ; n \in \N \}$ is written as
\begin{align*}
\pi(dk) := \frac{2R(k)}{3} dk.
\end{align*}
Suppose that $\{ \tau_{n} ; n \in \N \}$ be an I.I.D. sequence of random variables such that $\tau_{1}$ is exponentially distributed with intensity 1, and $\{ K_{n} \in \T ; n \in \Z_{\ge 0} \}$ and $\{ \tau_{n} ; n \in \N \}$ are independent. Set $t_{n} := \sum_{m = 1}^{n} \frac{1}{2 \gamma_{0} R(K_{m-1})} \tau_{m} , n \ge 1 $ , $t_{0} := 0$. Now we define a continuous-time Markov process $\{ K(t) \in \T ; t \ge 0 \}$ as $K(t) := K_{n}$ if $ t_{n} \le t < t_{n+1} $ for some $n \in \Z_{\ge 0}$. By simple computations we see that $\{ K(t) \in \T ; t \ge 0 \}$ is the continuous random walk generated by $2\gamma_{0} \mathcal{L}$. Now we can construct a soluton of $(\ref{boltzmann})$. Suppose that $u_{0}:\R \times \T \to \R_{\ge 0}$ is a function such that $u(\cdot,k) \in C^{1}_{0}(\R)$ for all $k \in \T $ and $u(y,\cdot) \in C(\T)$ for all $y \in \R$. Then we define a function $u : \R \times \T \to \R_{\ge 0}$ as 
\begin{align*}
u(y,k,t) := \mathbb{E}_{k} [u_{0}(y + Z(t),K(t))],
\end{align*}
where
\begin{align*}
Z(t) := \int_{0}^{t} ds ~ \omega^{'}(K(s)) .
\end{align*}
$u(y,k,t)dydk$ is the unique solution of $(\ref{boltzmann})$ with initial condition $u(y,k,0) dydk = u_{0}(y,k) dydk$. By applying \cite[Theorem 2.8]{KJO}, we obtain the scaling limit of $u(y,k,t)$. 

\begin{theorem}\label{main2}
$\\$
$(1)$ As $N \to \infty$, the finite-dimensional distributions of scaled process $\{ \frac{1}{N(\theta)} Z(Nt) ; t \ge 0 \}$ converge weakly to those of the L\'{e}vy process generated by $ - (2\pi)^{-\frac{6}{7-\theta}} C_{\theta,\gamma_{0}} (-\Delta)^{\frac{3}{7 - \theta}}$ if $2 < \theta \le 3$ and by $ - (2\pi)^{-\frac{3}{2}} C_{\theta,\gamma_{0}} (-\Delta)^{\frac{3}{4}}$ if $\theta > 3$, where $C_{\theta,\gamma_{0}}$ is given by $(\ref{diffcoeffi})$ and the ratio of the space-time scaling $N(\theta)$ is defined as
\begin{align*}
N(\theta) := \begin{cases} N^{\frac{7-\theta}{6}} \quad &2 < \theta < 3, \\ (\log{N})^{\frac{1}{2}} N^{\frac{2}{3}} \quad &\theta = 3 , \\ N^{\frac{2}{3}} \quad & \theta > 3. \end{cases}
\end{align*}

\noindent $(2)$ Suppose that $u_{0} \in C_{0}^{\infty}(\R \times \T)$. Define $u_{N}(y,k,t)$ as
\begin{align*}
u_{N}(y,k,t) := \mathbb{E}_{k}[u_{0}(\frac{1}{N(\theta)}(y + Z(Nt)),K(Nt))].
\end{align*}
Then for any $y \in \R$ and $t \ge 0$, we have
\begin{align*}
\lim_{N \to \infty} \int_{\T} dk ~ |u_{N}(N(\theta)y,k,t) - \bar{u}(y,t)|^{2} = 0,
\end{align*}
where $\bar{u}$ is the solution of the following partial differential equation:
\begin{align*}
&\begin{cases}
\partial_{t} \bar{u}(y,t) = - (2\pi)^{-\frac{6}{7-\theta}} C_{\theta,\gamma_{0}} (-\Delta)^{\frac{3}{7 - \theta}} u_{0}(y,t) , \\
\bar{u}(y,0) = \int_{\T} dk ~ u_{0}(y,k) .
\end{cases} & &2 < \theta \le 3, \\
&\begin{cases}
\partial_{t} \bar{u}(y,t) = - (2\pi)^{-\frac{3}{2}} C_{\theta,\gamma_{0}} (-\Delta)^{\frac{3}{4}} u_{0}(y,t) , \\
\bar{u}(y,0) = \int_{\T} dk ~ u_{0}(y,k) .
\end{cases} & &\theta > 3.
\end{align*}

\end{theorem}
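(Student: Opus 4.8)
The plan is to deduce Theorem~\ref{main2} essentially as a corollary of the general scaling-limit result \cite[Theorem 2.8]{KJO}, after verifying that the hypotheses of that theorem hold for our Markov chain $\{K_n\}$, the additive functional $Z(t)$, and the additive functional's increment $\omega^{'}(k)$. Concretely, \cite[Theorem 2.8]{KJO} gives the convergence of $\frac{1}{a_N}Z(Nt)$ to a stable (L\'evy) process whose generator is a fractional Laplacian, provided one identifies (i) the stationary Markov chain with invariant measure $\pi(dk)=\frac{2R(k)}{3}dk$, (ii) the tail behavior near $k=0$ of the ``waiting time'' $\frac{1}{2\gamma_0 R(K_{m-1})}$, which diverges like $R(k)^{-1}\sim (2\pi k)^{-2}$ up to constants by \eqref{defofmeanscat}, and (iii) the growth of $\omega^{'}(k)$ near $k=0$, which by the displayed asymptotics in the introduction behaves like $|k|^{-\frac{3-\theta}{2}}$ for $2<\theta<3$, like $\sqrt{-\log k}$ for $\theta=3$, and like a constant for $\theta>3$. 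Feeding the exponents $a=-\frac{3-\theta}{2}$ (resp.\ $0$) and $b=2$ into the index formula $\frac{b+1}{2(b-a)}$ yields the fractional exponents $\frac{3}{7-\theta}$ and $\frac34$, and the $\theta=3$ case acquires the logarithmic correction which is exactly why $N(\theta)$ carries the extra $(\log N)^{1/2}$ factor. The normalizing sequence $N(\theta)$ is precisely the one for which the rescaled sums of the heavy-tailed contributions to $Z$ have a nondegenerate limit; I would check this by a standard stable-limit computation (regular variation of the tail of $\int_0^{\tau_1/(2\gamma_0 R(k))}\omega^{'}(K(s))\,ds$ starting from $\pi$, using that the heavy tail is generated only by small-$k$ excursions).

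For part~(1), after invoking \cite[Theorem 2.8]{KJO} the only genuine work is to compute the constant in front of the fractional Laplacian, i.e.\ to show it equals $(2\pi)^{-\frac{6}{7-\theta}}C_{\theta,\gamma_0}$ with $C_{\theta,\gamma_0}$ and $C(\theta)$ as in \eqref{diffcoeffi}--\eqref{diffcoeffi2}. This is a bookkeeping exercise: one writes the characteristic exponent of the limiting L\'evy process as $\lim_{N\to\infty}N\,\mathbb{E}_\pi\big[1-\cos(\tfrac{p}{N(\theta)}Z(t_1))\big]$ (or the analogous resolvent expression used in \cite{KJO}), substitutes the small-$k$ asymptotics $\omega^{'}(k)\sim \frac{C(\theta)^{1/2}}{2\pi}|k|^{-\frac{3-\theta}{2}}$ coming from $\widehat\a(k)\sim \frac{C(\theta)}{(2\pi)^2}|k|^{\theta-1}$ near $k=0$ (this is where the three cases in $C(\theta)$ — an integral for $1<\theta<3$, the value $4\pi^2$ at $\theta=3$, and a convergent lattice sum for $\theta>3$ — come from, via $\sum_x |x|^{-\theta}(1-\cos 2\pi kx)$), together with $R(k)\sim (2\pi k)^2$ and the exponential holding-time at rate $2\gamma_0 R(k)$, and then evaluates the resulting one-sided stable integral $\int_0^\infty r^{-1-\alpha}(1-\cos r)\,dr$, which produces the cosecant factor $\csc\frac{(4-\theta)\pi}{7-\theta}$ and the power of $\gamma_0$. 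I would also record here the identification $C_{\theta,\gamma_0}=c_{\theta,\gamma_0}$ claimed in the introduction, which is immediate once the same constant appears in the direct limit of Theorem~\ref{main0} (that is proved in Section~5 via the integral in \eqref{mainterm2}).

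For part~(2), I would combine the process-level convergence of part~(1) with the homogenization of the $K$-component. The point is that on the time scale $Nt$ the chain $K(Nt)$ equilibrates to $\pi$ (more precisely, to the uniform-in-$k$ average against $\pi$) faster than the spatial functional $Z$ decorrelates, so $u_N(N(\theta)y,k,t)=\mathbb{E}_k[u_0(y+\tfrac{1}{N(\theta)}Z(Nt),K(Nt))]$ loses its $k$-dependence in the limit. The clean way is to appeal to the $\mathbb{L}^2(\pi)$-convergence statement in \cite[Theorem 2.8]{KJO} directly — it is formulated exactly as the vanishing of $\int_\T dk\,|u_N(N(\theta)y,k,t)-\bar u(y,t)|^2$ — after checking that $u_0\in C_0^\infty(\R\times\T)$ meets its regularity requirements and that $\bar u(y,0)=\int_\T u_0(y,k)\,dk$ is the correct projected initial datum; the limit $\bar u$ then solves $\partial_t\bar u=-(2\pi)^{-\frac{6}{7-\theta}}C_{\theta,\gamma_0}(-\Delta)^{\frac{3}{7-\theta}}\bar u$ (resp.\ exponent $\frac34$) because its generator is the one produced in part~(1).

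The main obstacle I anticipate is not the probabilistic convergence — that is essentially quoted — but the explicit constant computation: tracking every factor of $2\pi$ through the Fourier normalization, the relation $\widehat\a(k)\sim \frac{C(\theta)}{(2\pi)^2}|k|^{\theta-1}$, the holding-time rate $2\gamma_0 R(k)$, the $\frac13$-vs-$\frac23$ normalizations of $\pi$, and the stable-integral evaluation, so that the answer lands exactly on \eqref{diffcoeffi}. A secondary subtlety is the $\theta=3$ borderline: there $\omega^{'}(k)\sim\sqrt{-\log k}$ is only slowly varying, so the tail of $Z(t_1)$ is regularly varying of index $-3/2$ up to a slowly varying factor $(\log\cdot)^{3/4}$, and one must verify that the correct norming is $(\log N)^{1/2}N^{2/3}$ and that the slowly varying factor does not alter the limiting fractional exponent $\frac34$ — this is where one must be careful that $\csc\frac{(4-\theta)\pi}{7-\theta}$ and the power $\frac{6}{7-\theta}$ both specialize continuously to the $\theta=3$ values $\csc\frac{\pi}{4}=\sqrt2$ and $\frac32$, matching the second line of \eqref{diffcoeffi}.
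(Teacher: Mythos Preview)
Your plan for part~(1) is essentially the paper's: verify the hypotheses of \cite[Theorem 2.8]{KJO} (Conditions 2.1--2.3 and (2.12) there), then compute the constant. The paper checks the tail condition by the change of variables $k\mapsto N^{-1/3}k$ in $N\,\pi\big(\frac{\omega'(k)}{2\gamma_0 R(k)}>N(\theta)\lambda\big)$ and reads off a power law, then identifies the L\'evy--Khintchine exponent; the equality $c_{\theta,\gamma_0}=C_{\theta,\gamma_0}$ is obtained from $\int_{\R}(1-\cos y)|y|^{-\alpha-1}\,dy=\frac{2}{\alpha}\cos\frac{\alpha\pi}{2}\,\Gamma(1-\alpha)$ together with Euler's reflection formula, not via \eqref{mainterm2}. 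One small correction to your asymptotics: $R(k)\sim 6\pi^2 k^2$ (not $4\pi^2 k^2$) near $0$, coming from the $\frac{3}{2}\sin^2 2\pi k$ term in \eqref{defofmeanscat}; this feeds into the constant.

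For part~(2) there is a gap. You propose to quote an $\mathbb L^2(\pi)$-statement from \cite{KJO}, but the paper does \emph{not} do this, and the convergence asserted is in $\mathbb L^2(dk)$ (Lebesgue), not $\mathbb L^2(\pi)$; since $\pi(dk)=\tfrac{2}{3}R(k)\,dk$ vanishes at $k=0$, these are not interchangeable. More importantly, you correctly identify the mechanism (``$K(Nt)$ equilibrates faster than $Z$ decorrelates'') but do not say how to decouple the two correlated quantities inside the expectation $\mathbb E_k[u_0(y+\tfrac{1}{N(\theta)}Z(Nt),K(Nt))]$. The paper's device is an intermediate time scale: pick $m_N\to\infty$ with $m_N/N(\theta)\to 0$, replace $Z_N(Nt)$ by $Z_N(Nt-m_Nt)$ (the error is controlled in $\mathbb L^2(dk)$ by $(m_N/N(\theta))^2\int_\T|\omega'|^2$, using that Lebesgue measure is reversible for $K(\cdot)$), then apply the Markov property at time $Nt-m_Nt$ so that the $K$-factor becomes $\mathbb E_{K(Nt-m_Nt)}\big[e^{2\pi i x K(m_Nt)}-\int_\T e^{2\pi i x k'}dk'\big]$, which vanishes in $\mathbb L^2(dk)$ as $m_N\to\infty$ by ergodicity of the semigroup $e^{2\gamma_0 t\mathcal L}$. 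Without this splitting (or an equivalent argument) your proposal does not actually establish the joint limit; quoting \cite{KJO} alone, as you do, leaves the decoupling step unjustified in the present long-range setting with its $\theta$-dependent scaling $N(\theta)$.
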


We point out that the coefficient of the fractional diffusion obtained by two-step scaling limit is equal to that obtained by direct scaling limit $(\ref{diffcoeffi})$. From Theorem $\ref{main2}$ (2), we see that the time evolution of the macroscopic energy distribution $u(y,t)$ is governed by the fractional diffusion equation. 

\section{proof of Theorem \ref{main0}}\label{sec:proof0}

\subsection{Overview of the proof}

In this subsection we outline the very intuitive, and not rigorous proof of Theorem $\ref{main0}$. As we have mentioned in remark below Theorem $\ref{main0}$, all we have to do is to show the uniqueness of the limit of any convergent subsequence. To prove this, we fix a convergent subsequence and for notational simplicity we also denote this subsequence by $\{ <W_{\epsilon,+},\cdot> \}_{\epsilon}$. 

First we calculate the time evolution of the Wigner distribution. Roughly we obtain
\begin{align*}
\frac{d}{dt} \widetilde{W}_{\epsilon,+}(p,k,t) &= - \frac{\sqrt{-1} \epsilon \omega^{'}(k)}{f_{\theta,s}(\epsilon)} \widetilde{W}_{\epsilon,+}(p,k,t) + \frac{\gamma}{f_{\theta,s}(\epsilon)} [\mathcal{L} \{ \widetilde{W}_{\epsilon,+}(p,\cdot,t)](k) + o_{\epsilon}(1),
\end{align*}
where $\widetilde{W}_{\epsilon,+}$ is the Fourier transform of the Wigner distribution. Since this differential equation includes the scattering term on $\T$, the limit of the Wigner distribution homogenizes in $k \in \T$, that is, we will obtain $W_{\epsilon,+}(y,k,t) \to W(y,t) \quad \epsilon \to 0$ in some sense. Then by taking the Laplace transform of both sides of the above equation, we have
\begin{align*}
\bar{w}_{\epsilon,+}(p,k,\lambda) - \widetilde{W}_{\epsilon,+}(p,k,0) = - \frac{\sqrt{-1} \epsilon \omega^{'}(k)}{f_{\theta,s}(\epsilon)} \bar{w}_{\epsilon,+}(p,k,\lambda) + \frac{\gamma}{f_{\theta,s}(\epsilon)} [\mathcal{L} \{ \bar{w}_{\epsilon,+}(p,\cdot,\lambda)](k) + o_{\epsilon}(1),
\end{align*}
where $\bar{w}_{\epsilon,+}(p,k,\lambda) := \int_{0}^{\infty} e^{-\lambda t} \widetilde{W}_{\epsilon,+}(p,k,t)$. From the above equation and \eqref{initialconvergence}, we obtain
\begin{align*}
&\bigl\{ \int_{\T} dk ~ \frac{2 \gamma R(k)}{f_{\theta,s}(\epsilon)}(1 - \frac{2 \gamma R(k)}{f_{\theta,s}(\epsilon) \lambda + 2 \gamma R(k) + \sqrt{-1} \epsilon \omega^{'}(k)}) \bigr\} \int_{\T} dk ~ \frac{2}{3} R(k) \bar{w}_{\epsilon,+}(p,k,\lambda) \\
&= \bigl( \int_{\T} dk ~ \frac{2\gamma R(k)}{f_{\theta,s}(\epsilon) \lambda + 2 \gamma R(k) + \sqrt{-1} \epsilon \omega^{'}(k)} \bigr) \widetilde{W}_{0}(p) + o_{\epsilon}(1). 
\end{align*}
Since
\begin{align*}
&\int_{\T} dk ~ \frac{2 \gamma R(k)}{f_{\theta,s}(\epsilon)}(1 - \frac{2 \gamma R(k)}{f_{\theta,s}(\epsilon) \lambda + 2 \gamma R(k) + \sqrt{-1} \epsilon \omega^{'}(k)}) \to \begin{cases} \lambda + C_{\theta,\gamma_{0}}|p|^{\frac{6}{7-\theta}} \quad 2 < \theta \le 3, \\ \lambda + C_{\theta,\gamma_{0}}|p|^{\frac{3}{2}} \quad \theta > 3, \end{cases} \\
&\int_{\T} dk ~ \frac{2\gamma R(k)}{f_{\theta,s}(\epsilon) \lambda + 2 \gamma R(k) + \sqrt{-1} \epsilon \omega^{'}(k)} \to 1, \\
&\int_{\T} dk ~ R(k) \bar{w}_{\epsilon,+}(p,k,\lambda) \to w(p,\lambda) = \int_{\R} dy ~ e^{-2 \pi \sqrt{-1} y p} \int_{0}^{\infty} dt ~ e^{-\lambda t} W(y,t),
\end{align*}
as $\epsilon \to 0$, we have
\begin{align*}
\begin{cases} (\lambda + C_{\theta,\gamma_{0}}|p|^{\frac{6}{7-\theta}}) w(p,\lambda) + \widetilde{W}_{0}(p) = 0 \quad 2 < \theta \le 3, \\
(\lambda + C_{\theta,\gamma_{0}}|p|^{\frac{3}{2}}) w(p,\lambda) + \widetilde{W}_{0}(p) = 0 \quad \theta > 3. \end{cases}
\end{align*}
By using the one-to-one correspondence of Laplace-Fourier transform we can show that $W(y,t)$ is the unique solution of the fractional diffusion equation. 

The real situation is more complicated, but the correct proof follows the above intuitive story.

\subsection{Time evolution of the Wigner distribution}

First we introduce the Fourier transform of the Wigner distribution. Define $\widetilde{W}_{\epsilon,+}(p,k,t):\R \times \T \times \R_{\ge 0} \to \mathbb{C} , ~ \iota = +,-$ as
\begin{align}
\widetilde{W}_{\epsilon,+}(p,k,t) &:= \frac{\epsilon}{2} \mathbb{E}_{\epsilon} [\widehat{\psi}_{\epsilon}(k - \frac{\epsilon p}{2},\frac{t}{f_{\theta,s}(\epsilon)})^{*} \widehat{\psi}_{\epsilon}(k + \frac{\epsilon p}{2},\frac{t}{f_{\theta,s}(\epsilon)})] \label{fWigner+} .
\end{align}
To get closed equations governing the dynamics of $\widetilde{W}_{\epsilon,+}$, we also introduce the Fourier transforms of the anti-Wigner distribution $\widehat{Y}_{\epsilon}$ and the $*$-Wigner distribution $\widetilde{W}_{\epsilon,-}$. Define $\widetilde{W}_{\epsilon,-}(p,k,t), \widetilde{Y}_{\epsilon,\iota}(p,k,t):\R \times \T \times \R_{\ge 0} \to \mathbb{C} , ~ \iota = +,-$ as
\begin{align}
\widetilde{W}_{\epsilon,-}(p,k,t) &:= \widetilde{W}_{\epsilon,+}(t)(p,-k,t) \label{fWigner-}, \\ 
\widetilde{Y}_{\epsilon,+}(p,k,t) &:= \frac{\epsilon}{2} \mathbb{E}_{\epsilon} [\widehat{\psi}_{\epsilon}(-k + \frac{\epsilon p}{2},\frac{t}{f_{\theta,s}(\epsilon)}) \widehat{\psi}_{\epsilon}(k + \frac{\epsilon p}{2},\frac{t}{f_{\theta,s}(\epsilon)})] \label{faWigner+}, \\
\widetilde{Y}_{\epsilon,-}(p,k,t) &:= \widetilde{Y}_{\epsilon,+}(t)(-p,k,t)^{*} \label{faWigner-} .
\end{align}
The corresponding $S^{'}(\R)$ elements $<\widetilde{W}_{\epsilon,\iota}(t),\cdot>, <\widetilde{Y}_{\epsilon,\iota}(t),\cdot>$ are defined as
\begin{align}
<\widetilde{W}_{\epsilon,\iota}(t),J> &:= \int_{\R \times \T} dpdk ~ \widetilde{W}_{\epsilon,\iota}(p,k,t) J(p)^{*}, \label{fWigneriota} \\
<\widetilde{Y}_{\epsilon,\iota}(t),J> &:= \int_{\R \times \T} dpdk ~ \widetilde{Y}_{\epsilon,\iota}(p,k,t) J(p)^{*} \label{faWigneriota}.
\end{align}
Note that $<\widetilde{W}_{\epsilon,+},\cdot>(t)$ is indeed the inverse Fourier transformation of $<W_{\epsilon,+}(t),\cdot>$ in $S^{'}(\R)$. We may abbreviate the variables $(p,k,t)$ or some of them for simplicity. From $(\ref{defofpsi})$, we see that the time evolution of $(\widetilde{W}_{\epsilon,+},\widetilde{W}_{\epsilon,-},\widetilde{Y}_{\epsilon,+},\widetilde{Y}_{\epsilon,-})$ is governed by the following differential equations:
\begin{align}\label{startpoint}
\frac{d}{dt} \widetilde{W}_{\epsilon,+} &= - \frac{\sqrt{-1} \epsilon (\delta_{\epsilon} \omega)}{f_{\theta,s}(\epsilon)} \widetilde{W}_{\epsilon,+} + \frac{\gamma}{f_{\theta,s}(\epsilon)} (\mathcal{L}_{\epsilon p} \widetilde{W}_{\epsilon,+}) - \frac{\gamma}{2 f_{\theta,s}(\epsilon)} \sum_{\iota = \pm} (\mathcal{L}^{+}_{\iota \epsilon p} \widetilde{Y}_{\epsilon,-\iota}), \\
\frac{d}{dt} \widetilde{W}_{\epsilon,-} &= \frac{\sqrt{-1} \epsilon (\delta_{\epsilon} \omega)}{f_{\theta,s}(\epsilon)} \widetilde{W}_{\epsilon,-} + \frac{\gamma}{f_{\theta,s}(\epsilon)} (\mathcal{L}_{\epsilon p} \widetilde{W}_{\epsilon,-}) - \frac{\gamma}{2 f_{\theta,s}(\epsilon)} \sum_{\iota = \pm} (\mathcal{L}^{-}_{\iota \epsilon p} \widetilde{Y}_{\epsilon,-\iota}), \\
\frac{d}{dt} \widetilde{Y}_{\epsilon,+} &= - \frac{2\sqrt{-1} \bar{\omega}_{\epsilon}}{f_{\theta,s}(\epsilon)} \widetilde{Y}_{\epsilon,+} + \frac{\gamma}{f_{\theta,s}(\epsilon)} (\mathcal{L}_{\epsilon p} \widetilde{Y}_{\epsilon,+}) - \frac{\gamma}{f_{\theta,s}(\epsilon)}[\mathcal{R}_{\epsilon p} (\widetilde{Y}_{\epsilon,+} - \widetilde{Y}_{\epsilon,-}) ] \notag \\ 
& \quad - \frac{\gamma}{2 f_{\theta,s}(\epsilon)} \sum_{\iota = \pm} (\mathcal{L}^{+}_{\iota \epsilon p} \widetilde{W}_{\epsilon,-\iota}), \\
\frac{d}{dt} \widetilde{Y}_{\epsilon,-} &= \frac{2\sqrt{-1} \bar{\omega}_{\epsilon}}{f_{\theta,s}(\epsilon)} \widetilde{Y}_{\epsilon,-} + \frac{\gamma}{f_{\theta,s}(\epsilon)} (\mathcal{L}_{\epsilon p} \widetilde{Y}_{\epsilon,-}) + \frac{\gamma}{f_{\theta,s}(\epsilon)}[\mathcal{R}_{\epsilon p} (\widetilde{Y}_{\epsilon,+} - \widetilde{Y}_{\epsilon,-}) ] \notag \\ 
& \quad - \frac{\gamma}{2 f_{\theta,s}(\epsilon)} \sum_{\iota = \pm} (\mathcal{L}^{-}_{\iota \epsilon p} \widetilde{W}_{\epsilon,-\iota}), \label{startpointe}
\end{align}
where $\delta_{\epsilon} \omega, \bar{\omega}_{\epsilon}$ are functions on $\R \times \T$ defined as
\begin{align*}
\delta_{\epsilon} \omega(p,k) &:= \frac{1}{\epsilon} ( \omega(k+\frac{\epsilon p}{2}) - \omega(k-\frac{\epsilon p}{2}) ), \\
\bar{\omega}_{\epsilon}(p,k) &:= \frac{1}{2} [ \omega(k+\frac{\epsilon p}{2}) + \omega(k-\frac{\epsilon p}{2}) ].
\end{align*}
In addition, $\mathcal{R}_{p} , \mathcal{L}_{p} , \mathcal{L}^{\pm}_{p}, p \in \R$ are operators on $\mathbb{L}^{2}(\T)$ defined as
\begin{align}
\mathcal{R}_{p}f(k) &:= \int_{\T} dk^{'} ~ R(k,k',p) f(k'), \notag \\
\mathcal{L}_{p}f(k) &:= 2 \mathcal{R}_{p}f(k) - (R(k + \frac{p}{2}) + R(k - \frac{p}{2})) f(k) , \notag \\
\mathcal{L}^{\pm}_{p}f(k) &:= 2 \mathcal{R}_{p}f(k) - 2R(k \pm \frac{p}{2})f(k) , \notag \\
R(k,k',p) &:= \frac{1}{2} \sum_{\iota = \pm 1} r(k+\frac{p}{2},k + \iota k') r(k-\frac{p}{2},k + \iota k'). \label{scatkernel}
\end{align}
and $r(k,k^{'})$ is defined in $(\ref{sqrtscatt})$. Since the above differential equations are linear and the coefficients are bounded, $(\widetilde{W}_{\epsilon,+},\widetilde{W}_{\epsilon,-},\widetilde{Y}_{\epsilon,+},\widetilde{Y}_{\epsilon,-})(p,\cdot,t) \in (\mathbb{L}^{2}(\T))^{4}$ for any $p \in \R, t \ge 0$. 
Now we simplify the righthand side of $(\ref{startpoint})$ in several steps. By expanding $R(k,k^{'},\epsilon p), R(k \pm \frac{\epsilon p}{2})$ with respect to $\epsilon p$ up to the second order and using Lemma $\ref{expressionR}$, we have
\begin{align}\label{startpoint2}
\frac{d}{dt} \widetilde{W}_{\epsilon,\iota} &= - (\operatorname{sgn} \iota) \frac{\sqrt{-1} \epsilon (\delta_{\epsilon} \omega)}{f_{\theta,s}(\epsilon)} \widetilde{W}_{\epsilon,\iota} + \frac{\gamma}{f_{\theta,s}(\epsilon)} [\mathcal{L} \{ \widetilde{W}_{\epsilon,+} - \frac{1}{2}(\widetilde{Y}_{\epsilon,+} + \widetilde{Y}_{\epsilon,-}) \} ] \notag \\ 
& \quad - (\operatorname{sgn} \iota) \frac{\gamma R^{'} \epsilon p}{2 f_{\theta,s}(\epsilon)}(\widetilde{Y}_{\epsilon,+} - \widetilde{Y}_{\epsilon,-}) + \frac{\gamma \epsilon^{2} p^{2}}{f_{\theta,s}(\epsilon)} r^{(1,\iota)}_{\epsilon}, \\
\frac{d}{dt} \widetilde{Y}_{\epsilon,\iota} &= - (\operatorname{sgn} \iota) \frac{2\sqrt{-1} \bar{\omega}_{\epsilon}}{f_{\theta,s}(\epsilon)} \widetilde{Y}_{\epsilon,+} + \frac{\gamma}{f_{\theta,s}(\epsilon)} [\mathcal{L} \{ \widetilde{Y}_{\epsilon,\iota} - \frac{1}{2}(\widetilde{W}_{\epsilon,+} + \widetilde{W}_{\epsilon,-}) \} ] \notag \\ 
& \quad - (\operatorname{sgn} \iota) \bigl\{ \frac{\gamma}{f_{\theta,s}(\epsilon)}[\mathcal{R}_{0} (\widetilde{Y}_{\epsilon,+} - \widetilde{Y}_{\epsilon,-}) ] + \frac{\gamma R^{'} \epsilon p}{2 f_{\theta,s}(\epsilon)}(\widetilde{W}_{\epsilon,+} - \widetilde{W}_{\epsilon,-}) \bigr\} + \frac{\gamma \epsilon^{2} p^{2}}{f_{\theta,s}(\epsilon)} r^{(2,\iota)}_{\epsilon}, \label{startpoint2e}
\end{align}
where $\operatorname{sgn} \iota$ is the sign of $\iota = \pm$ and $r^{(i,\iota)}_{\epsilon}(p,k,t), ~ i=1,2, ~ \iota = \pm$ are remainder terms and these satisfy
\begin{align}\label{remainderbound}
||r^{(i,\iota)}_{\epsilon}(p,\cdot,t)||_{\mathbb{L}^{2}(\T)} \lesssim \sum_{\iota = \pm} ||\widetilde{W}_{\epsilon,\iota}(p,\cdot,t)||_{\mathbb{L}^{2}(\T)} + ||\widetilde{Y}_{\epsilon,\iota}(p,\cdot,t)||_{\mathbb{L}^{2}(\T)}.
\end{align}
for any $0 < \epsilon < 1, p \in \R, t \ge 0$. The operator $\mathcal{L}$ is defined in $(\ref{jumpop})$. Define 
\begin{align}\label{defofU+}
\widetilde{U}_{\epsilon,+}(p,k,t) &:= \frac{1}{2} (\widetilde{Y}_{\epsilon,+} + \widetilde{Y}_{\epsilon,-})(p,k,t) , \\
\widetilde{U}_{\epsilon,-}(p,k,t) &:= \frac{1}{2 \sqrt{-1} } (\widetilde{Y}_{\epsilon,+} - \widetilde{Y}_{\epsilon,-})(p,k,t) \label{defofU-}.
\end{align}
From $(\ref{startpoint2})$, $(\ref{startpoint2e})$, $(\ref{defofU+})$ and $(\ref{defofU-})$, we have
\begin{align}
\frac{d}{dt} \widetilde{W}_{\epsilon,\iota} &= - (\operatorname{sgn} \iota) \frac{\sqrt{-1} \epsilon (\delta_{\epsilon} \omega)}{f_{\theta,s}(\epsilon)} \widetilde{W}_{\epsilon,+} + \frac{\gamma}{f_{\theta,s}(\epsilon)} [\mathcal{L} ( \widetilde{W}_{\epsilon,+} - \widetilde{U}_{\epsilon,+} ) ] \notag \\ & \quad - (\operatorname{sgn} \iota) \frac{\sqrt{-1} \gamma R^{'} \epsilon p}{f_{\theta,s}(\epsilon)}\widetilde{U}_{\epsilon,-} + \frac{\gamma \epsilon^{2} p^{2}}{f_{\theta,s}(\epsilon)} r^{(1,\iota)}_{\epsilon}, \label{fwig1} \\
\frac{d}{dt} \widetilde{U}_{\epsilon,+} &= \frac{2 \bar{\omega}_{\epsilon}}{f_{\theta,s}(\epsilon)} \widetilde{U}_{\epsilon,-} + \frac{\gamma}{f_{\theta,s}(\epsilon)} [\mathcal{L} \{ \widetilde{U}_{\epsilon,+} - \frac{1}{2}(\widetilde{W}_{\epsilon,+} + \widetilde{W}_{\epsilon,-}) \} ] \notag \\ & \quad  + \frac{\gamma \epsilon^{2} p^{2}}{2f_{\theta,s}(\epsilon)} (r^{(2,+)}_{\epsilon} + r^{(2,-)}_{\epsilon}) , \label{fwig3} \\
\frac{d}{dt} \widetilde{U}_{\epsilon,-} &= - \frac{2 \bar{\omega}_{\epsilon}}{f_{\theta,s}(\epsilon)} \widetilde{U}_{\epsilon,+} - \frac{2\gamma R}{f_{\theta,s}(\epsilon)} \widetilde{U}_{\epsilon,-} + \frac{\sqrt{-1} \gamma R^{'} \epsilon p}{2 f_{\theta,s}(\epsilon)}(\widetilde{W}_{\epsilon,+} - \widetilde{W}_{\epsilon,-}) \notag \\ & \quad - \frac{\sqrt{-1} \gamma \epsilon^{2} p^{2}}{2 f_{\theta,s}(\epsilon)} (r^{(2,+)}_{\epsilon} - r^{(2,-)}_{\epsilon}) \label{fwig4}.
\end{align}
By using $(\ref{fwig1}) - (\ref{fwig4})$, we have the following $\mathbb{L}^{2}(\T)$ estimate of the Wigner distribution: 
\begin{proposition}\label{forlaplace}
There exists some positive constant $C_{1}$ such that 
\begin{align*}
\mathbf{E}_{2,\epsilon}(p,T) \le \mathbf{E}_{2,\epsilon}(p,0) \exp \{C_{1} p^{2} \frac{\gamma \epsilon^{2}}{f_{\theta,s}(\epsilon)} T \},  
\end{align*}
holds for any $0 < \epsilon < 1$, $p \in \R$, $T > 0$ where
\begin{align*}
\mathbf{E}_{2,\epsilon}(p,t) := \| \widetilde{W}_{\epsilon,+}(p,\cdot,t) \|_{\mathbb{L}^{2}(\T)}^{2} + \sum_{\iota = \pm } \| \widetilde{U}_{\epsilon,\iota}(p,\cdot,t) \|_{\mathbb{L}^{2}(\T)}^{2}.
\end{align*}
\end{proposition}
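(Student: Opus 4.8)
The plan is to prove the bound by a Gr\"onwall argument on $t\mapsto\mathbf{E}_{2,\epsilon}(p,t)$: differentiate in $t$, insert the closed system $(\ref{fwig1})$--$(\ref{fwig4})$, and show that on the right-hand side every term is either \emph{skew-adjoint} (hence invisible to $\frac{d}{dt}\|\cdot\|_{\mathbb{L}^{2}(\T)}^{2}$), \emph{dissipative} (a non-positive quadratic form), or \emph{small} (bounded by $C\,p^{2}\,\tfrac{\gamma\epsilon^{2}}{f_{\theta,s}(\epsilon)}\,\mathbf{E}_{2,\epsilon}(p,t)$). Throughout, $\langle\cdot,\cdot\rangle$ and $\|\cdot\|$ denote the inner product and norm of $\mathbb{L}^{2}(\T)$ in the variable $k$ with $p$ fixed, and I will freely use the reflection symmetries $\widetilde{W}_{\epsilon,-}(p,k,t)=\widetilde{W}_{\epsilon,+}(p,-k,t)$ and $\widetilde{U}_{\epsilon,\pm}(p,-k,t)=\widetilde{U}_{\epsilon,\pm}(p,k,t)$ (so that $\|\widetilde{W}_{\epsilon,-}\|=\|\widetilde{W}_{\epsilon,+}\|$), together with the fact that $\mathcal{L}$ of $(\ref{jumpop})$ is self-adjoint and non-positive on $\mathbb{L}^{2}(\T)$, $\langle J,\mathcal{L}J\rangle=-\int_{\T^{2}}dk\,dk'\,R(k,k')\,|J(k)-J(k')|^{2}\le 0$, and commutes with reflection.

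For the skew-adjoint part: since $\delta_{\epsilon}\omega$ and $\bar{\omega}_{\epsilon}$ are real and act by multiplication, $\sqrt{-1}\,\delta_{\epsilon}\omega$ is skew-adjoint, so the transport term of $(\ref{fwig1})$ contributes $0$ to $\operatorname{Re}\langle\widetilde{W}_{\epsilon,+},\partial_{t}\widetilde{W}_{\epsilon,+}\rangle$, and the rotation coupling $\pm\tfrac{2\bar{\omega}_{\epsilon}}{f_{\theta,s}(\epsilon)}$ between $\widetilde{U}_{\epsilon,+}$ and $\widetilde{U}_{\epsilon,-}$ in $(\ref{fwig3})$--$(\ref{fwig4})$ is antisymmetric and drops out of $\tfrac{d}{dt}(\|\widetilde{U}_{\epsilon,+}\|^{2}+\|\widetilde{U}_{\epsilon,-}\|^{2})$. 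For the dissipative part (the terms of order $\gamma/f_{\theta,s}(\epsilon)$): collecting the $\mathcal{L}$-terms from $(\ref{fwig1})$ and $(\ref{fwig3})$ and the damping $-\tfrac{2\gamma R}{f_{\theta,s}(\epsilon)}\widetilde{U}_{\epsilon,-}$ of $(\ref{fwig4})$, and using self-adjointness of $\mathcal{L}$ together with $\|\widetilde{W}_{\epsilon,-}\|=\|\widetilde{W}_{\epsilon,+}\|$ to symmetrize, this block reorganizes into the ``completed square''
\begin{align*}
\frac{2\gamma}{f_{\theta,s}(\epsilon)}\big\langle v-\widetilde{U}_{\epsilon,+},\,\mathcal{L}(v-\widetilde{U}_{\epsilon,+})\big\rangle+\frac{\gamma}{2f_{\theta,s}(\epsilon)}\langle w,\mathcal{L}w\rangle-\frac{4\gamma}{f_{\theta,s}(\epsilon)}\langle\widetilde{U}_{\epsilon,-},R\,\widetilde{U}_{\epsilon,-}\rangle,
\end{align*}
where $v:=\tfrac12(\widetilde{W}_{\epsilon,+}+\widetilde{W}_{\epsilon,-})$, $w:=\widetilde{W}_{\epsilon,+}-\widetilde{W}_{\epsilon,-}$, and each of the three pieces is $\le0$. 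This is the analytic reflection of the contractivity of the $\widehat{\psi}$-dynamics generated by $A+\gamma S$ with $S\le0$, and it is what kills the otherwise dangerous $\gamma/f_{\theta,s}(\epsilon)$ prefactor.

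It remains to estimate the terms carrying the factor $R'(k)\epsilon p$ (namely $\mp\tfrac{\sqrt{-1}\gamma R'\epsilon p}{f_{\theta,s}(\epsilon)}\widetilde{U}_{\epsilon,-}$ in $(\ref{fwig1})$ and $\tfrac{\sqrt{-1}\gamma R'\epsilon p}{2f_{\theta,s}(\epsilon)}w$ in $(\ref{fwig4})$) and the remainders $\tfrac{\gamma\epsilon^{2}p^{2}}{f_{\theta,s}(\epsilon)}r^{(i,\iota)}_{\epsilon}$. The $R'\epsilon p$ terms are of order $\epsilon$, not $\epsilon^{2}$, so they cannot simply be discarded; instead I absorb them into the genuine damping $-\tfrac{4\gamma}{f_{\theta,s}(\epsilon)}\langle\widetilde{U}_{\epsilon,-},R\widetilde{U}_{\epsilon,-}\rangle$ via the weighted Cauchy--Schwarz inequality
\begin{align*}
\frac{\gamma\epsilon p}{f_{\theta,s}(\epsilon)}\int_{\T}dk\,|R'(k)|\,|\widetilde{W}_{\epsilon,+}(k)|\,|\widetilde{U}_{\epsilon,-}(k)|\le\delta\,\frac{\gamma}{f_{\theta,s}(\epsilon)}\langle\widetilde{U}_{\epsilon,-},R\widetilde{U}_{\epsilon,-}\rangle+\frac{C_{\delta}\gamma\epsilon^{2}p^{2}}{f_{\theta,s}(\epsilon)}\Big\|\frac{R'}{\sqrt{R}}\Big\|_{\infty}^{2}\|\widetilde{W}_{\epsilon,+}\|^{2},
\end{align*}
which is legitimate because $R(k)=2\sin^{2}\pi k\,(3-2\sin^{2}\pi k)$ has on $\T$ a single zero, at $k=0$, of order two, matched exactly by the order-one zero of $R'$, so that $R'/\sqrt{R}$ is bounded on $\T$; choosing $\delta$ small enough (summed over the finitely many such terms) makes the first piece absorbable and leaves only $C\,\tfrac{\gamma\epsilon^{2}p^{2}}{f_{\theta,s}(\epsilon)}\mathbf{E}_{2,\epsilon}(p,t)$. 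The remainders are controlled by Cauchy--Schwarz and the bound $(\ref{remainderbound})$, using $\|\widetilde{W}_{\epsilon,-}\|=\|\widetilde{W}_{\epsilon,+}\|$ and $\|\widetilde{Y}_{\epsilon,\pm}\|\le\|\widetilde{U}_{\epsilon,+}\|+\|\widetilde{U}_{\epsilon,-}\|$, each contributing at most $C\,\tfrac{\gamma\epsilon^{2}p^{2}}{f_{\theta,s}(\epsilon)}\mathbf{E}_{2,\epsilon}(p,t)$. Adding the three groups gives $\tfrac{d}{dt}\mathbf{E}_{2,\epsilon}(p,t)\le C_{1}\,p^{2}\,\tfrac{\gamma\epsilon^{2}}{f_{\theta,s}(\epsilon)}\,\mathbf{E}_{2,\epsilon}(p,t)$, and Gr\"onwall's inequality yields the claim.

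I expect the main obstacle to be the bookkeeping in the dissipative step: verifying that the various cross-terms between $\widetilde{W}_{\epsilon,\pm}$ and $\widetilde{U}_{\epsilon,\pm}$ actually reassemble into the non-positive ``completed-square'' form above, which requires using the reflection symmetry of the Wigner functions and of $\mathcal{L}$ in precisely the right places and is closely tied to the (purely algebraic) fact that the $p=0$ frozen generator of $(\ref{startpoint})$--$(\ref{startpointe})$ is itself dissipative, mirroring $S\le0$. A secondary difficulty is the absorption of the $R'\epsilon p$ terms, where the dissipation $-\langle\widetilde{U}_{\epsilon,-},R\widetilde{U}_{\epsilon,-}\rangle$ degenerates at $k=0$ and only the matching of the vanishing orders of $R$ and $R'$ at $k=0$ makes the estimate go through.
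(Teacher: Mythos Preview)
Your proposal is correct and follows essentially the same route as the paper: an energy identity for $\mathbf{E}_{2,\epsilon}$, identification of skew/dissipative/small pieces, absorption of the $R'\epsilon p$ cross term into the $R|\widetilde{U}_{\epsilon,-}|^{2}$ damping via the boundedness of $(R')^{2}/R$, and Gr\"onwall. Your ``completed-square'' form with $v=\tfrac12(\widetilde{W}_{\epsilon,+}+\widetilde{W}_{\epsilon,-})$, $w=\widetilde{W}_{\epsilon,+}-\widetilde{W}_{\epsilon,-}$ is exactly the parity decomposition of the paper's single term $\langle \widetilde{W}_{\epsilon,+}-\widetilde{U}_{\epsilon,+},\,\mathcal{L}(\widetilde{W}_{\epsilon,+}-\widetilde{U}_{\epsilon,+})\rangle=-\mathcal{E}(\widetilde{W}_{\epsilon,+}-\widetilde{U}_{\epsilon,+})$: since $\widetilde{U}_{\epsilon,+}$ is even in $k$ and $\mathcal{L}$ preserves parity, the even/odd cross terms vanish and your two $\mathcal{L}$-pieces sum to precisely the paper's one. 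The paper simply uses the reflection identity $\int \widetilde{U}_{\epsilon,+}^{*}\mathcal{L}\widetilde{W}_{\epsilon,-}\,dk=\int \widetilde{U}_{\epsilon,+}^{*}\mathcal{L}\widetilde{W}_{\epsilon,+}\,dk$ directly rather than passing through $v,w$, which makes the bookkeeping you flag as the ``main obstacle'' a one-line observation.
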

\begin{proof}
By multiplying both sides of $(\ref{fwig1}), (\ref{fwig3}), (\ref{fwig4})$ by $\widetilde{W}_{\epsilon,+}^{*}, \widetilde{U}_{\epsilon,\iota}^{*}, \iota = \pm $ and $(\ref{fwig1})^{*}, (\ref{fwig3})^{*}, (\ref{fwig4})^{*}$ by $\widetilde{W}_{\epsilon,+}, \widetilde{U}_{\epsilon,\iota}, \iota = \pm $, adding them sideways, and taking the integral with respect to $k \in \T$ and $t \in [0,T]$, we have
\begin{align*}
&\mathbf{E}_{2,\epsilon}(p,T) + \frac{2 \gamma}{f_{\theta,s}(\epsilon)} \int_{0}^{T} dt ~ \mathcal{E}(\widetilde{W}_{\epsilon,+}(p,\cdot,t) - \widetilde{U}_{\epsilon,+}(p,\cdot,t)) \\ 
& \quad + \frac{4 \gamma}{f_{\theta,s}(\epsilon)} \int_{0}^{T} dt \int_{\T} dk ~ R(k) |\widetilde{U}_{\epsilon,+}(p,k,t)|^{2} \\
& \quad + \frac{2 \gamma \epsilon p}{f_{\theta,s}(\epsilon)} \int_{0}^{T} dt \int_{\T} dk ~ R^{'}(k) \operatorname{Im}(\widetilde{W}_{\epsilon,+}^{*} \widetilde{U}_{\epsilon,+})(p,k,t) \\
&= \mathbf{E}_{2,\epsilon}(p,0) + \frac{\gamma \epsilon^{2} p^{2}}{f_{\theta,s}(\epsilon)} \int_{0}^{T} dt \int_{\T} dk ~ \mathbf{R}_{\epsilon}(p,k,t),
\end{align*}
where 
\begin{align*}
\mathcal{E}(f) &:= \int_{\T} dk ~ f(k)(- \mathcal{L} f)(k) , \quad f \in \mathbb{L}^{2}(\T), \notag \\
&= \int_{\T^{2}} dkdk^{'} ~ R(k,k^{'}) |f(k) - f(k^{'})|^{2} , 
\end{align*}
and $\mathbf{R}_{\epsilon}(p,k,t)$ is a remainder term which satisfies
\begin{align*}
\| \mathbf{R}_{\epsilon}(p,\cdot,t) \|_{\mathbb{L}^{1}(\T)} \lesssim \mathbf{E}_{2,\epsilon}(p,t),
\end{align*}
for any $p \in \R, t \ge 0$. By using Young's inequality, we have
\begin{align*}
&\epsilon p \int_{0}^{T} dt \int_{\T} dk ~ R^{'}(k) \operatorname{Im}(\widetilde{W}_{\epsilon,+}^{*} \widetilde{U}_{\epsilon,+})(p,k,t) \\
& \quad \ge - 2 \int_{0}^{T} dt \int_{\T} dk ~ R(k) |\widetilde{U}_{\epsilon,+}(p,k,t)|^{2} - \frac{\epsilon^{2} p^{2}}{8} \int_{0}^{T} dt \int_{\T} dk ~ \frac{(R^{'}(k))^{2}}{R(k)} |\widetilde{W}_{\epsilon,+}(p,k,t)|^{2}.
\end{align*}
Since $\frac{(R^{'}(k))^{2}}{R(k)}$ is uniformly bounded, we have
\begin{align*}
\mathbf{E}_{2,\epsilon}(p,T) - \mathbf{E}_{2,\epsilon}(p,0) \lesssim \frac{\gamma \epsilon^{2} p^{2}}{f_{\theta,s}(\epsilon)} \int_{0}^{T} dt ~ \mathbf{E}_{2,\epsilon}(p,k,t).
\end{align*}
By using Gronwall's inequality and (\ref{initialbound2}), we conclude the proof of this proposition.
\end{proof}

\subsection{Laplace transform of the Wigner distribution}

Define 
\begin{align*}
\bar{w}_{\epsilon,\iota}(p,k,\lambda) &:= \int_{0}^{\infty} dt ~ e^{- \lambda t} \widetilde{W}_{\epsilon,\iota}(p,k,t) , \\
\bar{u}_{\epsilon,\iota}(p,k,\lambda) &:= \int_{0}^{\infty} dt ~ e^{- \lambda t} \widetilde{U}_{\epsilon,\iota}(p,k,t) , 
\end{align*}
for $\lambda > 0$. Thanks to Proposition $\ref{forlaplace}$, for $p \in \R$ and $\lambda > 0$, we have $\bar{w}_{\epsilon,\iota}(p,\cdot,\lambda) , \bar{u}_{\epsilon,\iota}(p,\cdot,\lambda) \in \mathbb{L}^{2}(\T)$ by choosing sufficiently small $\epsilon > 0$. Taking the Laplace transforms of both sides of $(\ref{fwig1}) - (\ref{fwig4})$, we see that $\bar{w},\bar{u}$ satisfies the follwoing equations: 
\begin{align}
\lambda \bar{w}_{\epsilon,\iota} - \widetilde{W}_{\epsilon,\iota}^{(0)} &= - \operatorname{sgn}(\iota) \{ \frac{\sqrt{-1} \epsilon (\delta_{\epsilon} \omega) }{f_{\theta,s}(\epsilon)} \bar{w}_{\epsilon,\iota} + \frac{\sqrt{-1} \gamma R^{'} \epsilon p}{f_{\theta,s}(\epsilon)} \bar{u}_{\epsilon,-} \} \notag \\ & \quad  + \frac{\gamma}{f_{\theta,s}(\epsilon)} [ \mathcal{L} (\bar{w}_{\epsilon,\iota} - \bar{u}_{\epsilon,+}) ] + \frac{\gamma \epsilon^{2} p^{2}}{f_{\theta,s}(\epsilon)} \mathbf{r}_{\epsilon}^{(1,\iota)}, \label{laplacewigner+} \\
\lambda \bar{u}_{\epsilon,+} - \widetilde{U}_{\epsilon,+}^{(0)} &= \frac{2 \bar{\omega}_{\epsilon}}{f_{\theta,s}(\epsilon)} \bar{u}_{\epsilon,-} + \frac{\gamma}{f_{\theta,s}(\epsilon)} [\mathcal{L} \{ \bar{u}_{\epsilon,+} - \frac{1}{2} ( \bar{w}_{\epsilon,+} + \bar{w}_{\epsilon,-} ) \} ] + \frac{\gamma \epsilon^{2} p^{2}}{f_{\theta,s}(\epsilon)} \mathbf{r}_{\epsilon}^{(2,+)}, \label{laplaceawigner+} \\
\lambda \bar{u}_{\epsilon,-} - \widetilde{U}_{\epsilon,-}^{(0)} &= - \frac{2 \bar{\omega}_{\epsilon}}{f_{\theta,s}(\epsilon)} \bar{u}_{\epsilon,+} - \frac{2 \gamma R}{f_{\theta,s}(\epsilon)} \bar{u}_{\epsilon,-} - \frac{\sqrt{-1} \gamma R^{'} \epsilon p}{2 f_{\theta,s}(\epsilon)} ( \bar{w}_{\epsilon,+} - \bar{w}_{\epsilon,-} ) \notag \\ & \quad + \frac{\gamma \epsilon^{2} p^{2}}{f_{\theta,s}(\epsilon)} \mathbf{r}_{\epsilon}^{(2,-)}, \label{laplaceawigner-}
\end{align} 
where $\widetilde{W}_{\epsilon,\iota}^{(0)} := \widetilde{W}_{\epsilon,\iota}(\cdot,\cdot,0), \widetilde{U}_{\epsilon,+}^{(0)} := \widetilde{U}_{\epsilon,\iota}(\cdot,\cdot,0) , \iota = \pm$. $\mathbf{r}_{\epsilon}^{(i,\iota)}(p,k,\lambda), i = 1,2 ~ \iota = \pm,$ are the Laplace transforms of the remainder terms $r^{(i,\iota)}_{\epsilon}(p,k,t), ~ i=1,2, ~ \iota = \pm$ and they satisfy
\begin{align}\label{esti:Laplaceremainder}
\| \mathbf{r}_{\epsilon}^{i,\iota}(p,\cdot,\lambda) \|_{\mathbb{L}^{2}(\T)}^{2} \lesssim \lambda \| \bar{w}_{\epsilon,+}(p,\cdot,\lambda) \|_{\mathbb{L}^{2}(\T)}^{2} + \lambda \sum_{\iota = \pm} \| \bar{u}_{\epsilon,\iota}(p,\cdot,\lambda) \|_{\mathbb{L}^{2}(\T)}^{2}.
\end{align}
By using the above equations, we can show the following estimate.
\begin{proposition}\label{wignerestimate}
For any positive constant $M > 0$ and a compact interval $I \subset \R_{>0}$, there exists some positive constant $C_{M,I} > 0$ such that
\begin{align}\label{Wigestimate}
&\overline{\lim_{\epsilon \to 0}} \frac{\gamma}{f_{\theta,s}(\epsilon)}  \sup_{|p| \le M, \lambda \in I} [\int_{\T} dk ~ R(k) |\bar{u}_{\epsilon,-}(p,k,\lambda)|^{2} + \mathcal{E}(\bar{w}_{\epsilon,+}(p,\cdot,\lambda)) \notag \\
& \quad \quad + \int_{\T} dk ~ R(k) (\frac{\bar{\omega}_{\epsilon}(p,k)}{\lambda f_{\theta,s}(\epsilon) + 2 \gamma R(k)})^{2} |\bar{u}_{\epsilon,+}(p,k,\lambda)|^{2} ] \le C_{M,I}. 
\end{align}
\end{proposition}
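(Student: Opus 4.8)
The plan is to run, in the Laplace-transformed variables, the same energy computation that yields Proposition~\ref{forlaplace}, and then to read off the three quantities in \eqref{Wigestimate} from the resulting identity with the help of the algebraic relation \eqref{laplaceawigner-}.

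Two a priori facts should be recorded first. Cauchy--Schwarz applied to the definitions \eqref{fWigner+} and \eqref{faWigner+} together with \eqref{initialbound2} gives $\mathbf{E}_{2,\epsilon}(p,0) \lesssim K_{1}$ uniformly in $p$, and in particular $\|\widetilde{W}_{\epsilon,\iota}^{(0)}(p,\cdot)\|_{\mathbb{L}^{2}(\T)}^{2},\, \|\widetilde{U}_{\epsilon,\iota}^{(0)}(p,\cdot)\|_{\mathbb{L}^{2}(\T)}^{2} \lesssim K_{1}$; moreover, since $\gamma\epsilon^{2}/f_{\theta,s}(\epsilon) \to 0$, the exponential bound of Proposition~\ref{forlaplace} makes the Laplace transforms well defined for small $\epsilon$ and yields $\sup_{|p| \le M,\,\lambda \in I}\bigl(\|\bar{w}_{\epsilon,+}(p,\cdot,\lambda)\|_{\mathbb{L}^{2}(\T)}^{2} + \sum_{\iota = \pm}\|\bar{u}_{\epsilon,\iota}(p,\cdot,\lambda)\|_{\mathbb{L}^{2}(\T)}^{2}\bigr) \le C_{M,I}$ for small $\epsilon$.

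Next I would form the Laplace-domain energy identity: multiply \eqref{laplacewigner+} (with $\iota=+$), \eqref{laplaceawigner+} and \eqref{laplaceawigner-} by $\bar{w}_{\epsilon,+}^{*}$, $\bar{u}_{\epsilon,+}^{*}$ and $\bar{u}_{\epsilon,-}^{*}$ respectively, integrate over $\T$, add the complex conjugates and sum. The $\delta_{\epsilon}\omega$ contributions drop in the real part, the $\bar{\omega}_{\epsilon}$ contributions cancel pairwise, and --- exploiting the $k \mapsto -k$ symmetry of the system ($\bar{w}_{\epsilon,-}(p,k,\lambda) = \bar{w}_{\epsilon,+}(p,-k,\lambda)$, evenness in $k$ of $\bar{u}_{\epsilon,\pm}$, oddness of $R'$, the fact that $\mathcal{L}$ commutes with $k \mapsto -k$, and $\mathcal{E}(\bar{w}_{\epsilon,-}) = \mathcal{E}(\bar{w}_{\epsilon,+})$) --- the scattering terms collapse, just as for Proposition~\ref{forlaplace}, to $-\tfrac{2\gamma}{f_{\theta,s}(\epsilon)}\mathcal{E}(\bar{w}_{\epsilon,+} - \bar{u}_{\epsilon,+}) - \tfrac{4\gamma}{f_{\theta,s}(\epsilon)}\int_{\T} dk\,R(k)|\bar{u}_{\epsilon,-}(p,k,\lambda)|^{2}$, up to one cross term proportional to $\tfrac{\gamma\epsilon p}{f_{\theta,s}(\epsilon)}\int_{\T} dk\,R'(k)\operatorname{Im}(\bar{u}_{\epsilon,-}\bar{w}_{\epsilon,+}^{*})$. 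The initial-data terms are bounded by $\mathbf{E}_{2,\epsilon}(p,0)^{1/2}$ times the $\mathbb{L}^{2}$ bounds above, hence by $C_{M,I}$; the cross term is split by Young's inequality, a small multiple of $\tfrac{\gamma}{f_{\theta,s}(\epsilon)}\int_{\T} R|\bar{u}_{\epsilon,-}|^{2}$ being absorbed and the remainder being $\lesssim \tfrac{\gamma\epsilon^{2}p^{2}}{f_{\theta,s}(\epsilon)}\int_{\T} (R')^{2}R^{-1}|\bar{w}_{\epsilon,+}|^{2} \to 0$ because $(R')^{2}/R$ is bounded and $\gamma\epsilon^{2}/f_{\theta,s}(\epsilon) \to 0$; the Laplace-transformed remainders $\mathbf{r}_{\epsilon}^{(i,\iota)}$ are treated likewise via \eqref{esti:Laplaceremainder}. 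This yields $\tfrac{\gamma}{f_{\theta,s}(\epsilon)}\bigl(\mathcal{E}(\bar{w}_{\epsilon,+}(p,\cdot,\lambda) - \bar{u}_{\epsilon,+}(p,\cdot,\lambda)) + \int_{\T} dk\,R(k)|\bar{u}_{\epsilon,-}(p,k,\lambda)|^{2}\bigr) \le C_{M,I}$ for $|p|\le M$, $\lambda\in I$ and small $\epsilon$, which already controls the first term of \eqref{Wigestimate}. The $\bar{\omega}_{\epsilon}$-weighted (third) term is then recovered from \eqref{laplaceawigner-} read as an algebraic identity for $\bar{u}_{\epsilon,-}$:
\begin{align*}
\frac{2\bar{\omega}_{\epsilon}}{\lambda f_{\theta,s}(\epsilon) + 2\gamma R}\,\bar{u}_{\epsilon,+} &= \frac{f_{\theta,s}(\epsilon)\,\widetilde{U}_{\epsilon,-}^{(0)}}{\lambda f_{\theta,s}(\epsilon) + 2\gamma R} - \bar{u}_{\epsilon,-} - \frac{\sqrt{-1}\,\gamma R'\epsilon p}{2(\lambda f_{\theta,s}(\epsilon) + 2\gamma R)}(\bar{w}_{\epsilon,+} - \bar{w}_{\epsilon,-}) + \frac{\gamma\epsilon^{2}p^{2}}{\lambda f_{\theta,s}(\epsilon) + 2\gamma R}\,\mathbf{r}_{\epsilon}^{(2,-)};
\end{align*}
multiplying by $R^{1/2}$ and using $\|x+y\|^{2} \le 2\|x\|^{2} + 2\|y\|^{2}$ bounds $\tfrac{\gamma}{f_{\theta,s}(\epsilon)}\int_{\T} R\bigl(\tfrac{\bar{\omega}_{\epsilon}}{\lambda f_{\theta,s}(\epsilon) + 2\gamma R}\bigr)^{2}|\bar{u}_{\epsilon,+}|^{2}$ by $\tfrac{\gamma}{f_{\theta,s}(\epsilon)}\int_{\T} R|\bar{u}_{\epsilon,-}|^{2}$ (just controlled) plus three error integrals, the decisive point being the elementary inequality $(\lambda f_{\theta,s}(\epsilon) + 2\gamma R)^{2} \ge 8\lambda\gamma f_{\theta,s}(\epsilon) R$, which makes the initial-data error $\le \tfrac{1}{8\lambda}\|\widetilde{U}_{\epsilon,-}^{(0)}\|_{\mathbb{L}^{2}(\T)}^{2} \lesssim K_{1}/\lambda$ and, together with $(R')^{2}/R$ bounded and $\gamma\epsilon^{2}/f_{\theta,s}(\epsilon) \to 0$, kills the $R'\epsilon p$ and $\mathbf{r}_{\epsilon}^{(2,-)}$ errors.

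The main obstacle is the middle term: the plain energy identity only produces $\mathcal{E}(\bar{w}_{\epsilon,+} - \bar{u}_{\epsilon,+})$, so to reach $\mathcal{E}(\bar{w}_{\epsilon,+})$ via $\mathcal{E}(\bar{w}_{\epsilon,+}) \lesssim \mathcal{E}(\bar{w}_{\epsilon,+} - \bar{u}_{\epsilon,+}) + \mathcal{E}(\bar{u}_{\epsilon,+})$ one must establish separately that $\tfrac{\gamma}{f_{\theta,s}(\epsilon)}\mathcal{E}(\bar{u}_{\epsilon,+}) \le C_{M,I}$. I would extract this from \eqref{laplaceawigner+} (pairing it with $(\bar{u}_{\epsilon,+} - \tfrac12(\bar{w}_{\epsilon,+} + \bar{w}_{\epsilon,-}))^{*}$, or with $\bar{u}_{\epsilon,+}^{*}$), using the $\mathbb{L}^{2}$ bounds and \eqref{initialbound2} for the $\lambda$- and initial-data terms, $\gamma\epsilon^{2}/f_{\theta,s}(\epsilon) \to 0$ for the remainder, and a weighted Cauchy--Schwarz split of the $\bar{\omega}_{\epsilon}\bar{u}_{\epsilon,-}$ term against the weight $\lambda f_{\theta,s}(\epsilon) + 2\gamma R$, so that it couples to the already-controlled $\int_{\T} R|\bar{u}_{\epsilon,-}|^{2}$ and the third term of \eqref{Wigestimate}, together with the fact that $\bar{\omega}_{\epsilon}^{2}R^{-1/2}$ remains bounded near $k=0$ for $\theta > 2$. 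Collecting the three estimates gives the uniform bound $\tfrac{\gamma}{f_{\theta,s}(\epsilon)}\sup_{|p|\le M,\,\lambda\in I}[\text{bracket in }\eqref{Wigestimate}] \le C_{M,I}$ for small $\epsilon$, and passing to the $\varlimsup$ finishes the proof; the remaining effort is the bookkeeping needed to keep all error terms uniform in $|p| \le M$ and $\lambda \in I$.
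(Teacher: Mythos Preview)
Your first two steps coincide with the paper's: the Laplace-domain energy identity yields control of $\tfrac{\gamma}{f_{\theta,s}(\epsilon)}\bigl(\int_{\T} R|\bar u_{\epsilon,-}|^{2}+\mathcal{E}(\bar w_{\epsilon,+}-\bar u_{\epsilon,+})\bigr)$, and the $\bar\omega_\epsilon$-weighted term is then read off from \eqref{laplaceawigner-} exactly as you say, via $(\lambda f_{\theta,s}(\epsilon)+2\gamma R)^{2}\ge 8\lambda\gamma f_{\theta,s}(\epsilon)R$.

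The gap is in your third step, the bound on $\tfrac{\gamma}{f_{\theta,s}(\epsilon)}\mathcal{E}(\bar u_{\epsilon,+})$. Pairing \eqref{laplaceawigner+} with $\bar u_{\epsilon,+}^{*}$ does not produce $-\mathcal{E}(\bar u_{\epsilon,+})$: the scattering term there is $\mathcal{L}\bigl(\bar u_{\epsilon,+}-\tfrac12(\bar w_{\epsilon,+}+\bar w_{\epsilon,-})\bigr)$, so you pick up a cross Dirichlet form $\mathcal{E}(\bar w_{\epsilon,+}+\bar w_{\epsilon,-},\bar u_{\epsilon,+})$. Any Young splitting of that cross form feeds back $\mathcal{E}(\bar w_{\epsilon,+})$, which in turn is only controlled through $\mathcal{E}(\bar w_{\epsilon,+}-\bar u_{\epsilon,+})+\mathcal{E}(\bar u_{\epsilon,+})$; one checks that the resulting coefficient of $\mathcal{E}(\bar u_{\epsilon,+})$ on the right cannot be pushed below $1$, so the argument is circular. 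The alternative pairing with $(\bar u_{\epsilon,+}-\tfrac12(\bar w_{\epsilon,+}+\bar w_{\epsilon,-}))^{*}$ avoids this but no longer isolates $\mathcal{E}(\bar u_{\epsilon,+})$. Your weighted Cauchy--Schwarz on the $\bar\omega_\epsilon\bar u_{\epsilon,-}$ term has a separate problem: the piece paired against $\bar u_{\epsilon,+}$ is $\tfrac{1}{\gamma f_{\theta,s}(\epsilon)}\int \bar\omega_\epsilon^{2}R^{-1}|\bar u_{\epsilon,+}|^{2}$, and near $k=0$ the ratio of this integrand to the already-controlled weight $R\,\bar\omega_\epsilon^{2}(\lambda f_{\theta,s}(\epsilon)+2\gamma R)^{-2}$ is $\sim (\lambda f_{\theta,s}(\epsilon))^{2}/(\gamma R)^{2}\to\infty$; the observation that $\bar\omega_\epsilon^{2}R^{-1/2}$ stays bounded does not cure this divergence.

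The paper closes this step differently and more simply: it uses $\mathcal{E}(\bar u_{\epsilon,+})\lesssim \int_{\T} R|\bar u_{\epsilon,+}|^{2}$ and then splits the integral at the scale $|k|=(f_{\theta,s}(\epsilon)/\gamma)^{1/2}$. On $\{|k|\le (f_{\theta,s}(\epsilon)/\gamma)^{1/2}\}$ one has $R(k)\lesssim f_{\theta,s}(\epsilon)/\gamma$, so $\tfrac{\gamma}{f_{\theta,s}(\epsilon)}\int_{\text{small }k}R|\bar u_{\epsilon,+}|^{2}\lesssim \|\bar u_{\epsilon,+}\|_{\mathbb{L}^{2}(\T)}^{2}\le C_{M,I}$. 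On the complement one has the pointwise lower bound $\bigl(\tfrac{\bar\omega_\epsilon}{\lambda f_{\theta,s}(\epsilon)+2\gamma R}\bigr)^{2}\gtrsim (f_{\theta,s}(\epsilon))^{-(5-\theta)/2}\gamma^{-(\theta-1)/2}$, which lets one dominate $\int_{\text{large }k}R|\bar u_{\epsilon,+}|^{2}$ by the already-established $\bar\omega_\epsilon$-weighted integral. This scale-splitting trick is the missing idea; once you have it, your outline goes through.
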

\begin{proof}
First we will show that
\begin{align}\label{wigestimate1}
\overline{\lim_{\epsilon \to 0}} \frac{\gamma}{f_{\theta,s}(\epsilon)} \sup_{|p| \le M, \lambda \in I} [ \int_{\T} dk ~ R(k) |\bar{u}_{\epsilon,-}(p,k,\lambda)|^{2} + \mathcal{E}((\bar{w}_{\epsilon,+}- \bar{u}_{\epsilon,+})(p,\cdot,\lambda)) ] \le C_{M,I}.
\end{align}
By multiplying both sides of $(\ref{laplacewigner+}), (\ref{laplaceawigner+}), (\ref{laplaceawigner-})$ by $\bar{w}_{\epsilon,+}^{*}, \bar{u}_{\epsilon,\iota}^{*}, \iota = \pm $, taking the real parts of them, adding them sideways and taking the integral with respect to $k \in \T$, we obtain
\begin{align*}
&\lambda (\| \bar{w}_{\epsilon,+}(p,\cdot,\lambda) \|_{\mathbb{L}^{2}(\T)}^{2} + \sum_{\iota = \pm} \| \bar{u}_{\epsilon,\iota}(p,\cdot,\lambda) \|_{\mathbb{L}^{2}(\T)}^{2} ) + \frac{2 \gamma}{f_{\theta,s}(\epsilon)} \int_{\T} dk ~ R(k) |\bar{u}_{\epsilon,-}(p,k,\lambda)|^{2} \\
& \quad + \frac{2 \gamma \epsilon p}{f_{\theta,s}(\epsilon)} \int_{\T} dk ~ R^{'}(k) \operatorname{Im}(\bar{w}_{\epsilon,+}^{*}\bar{u}_{\epsilon,-})(p,k,\lambda) + \frac{\gamma}{f_{\theta,s}(\epsilon)} \mathcal{E}(\bar{w}_{\epsilon,+}(p,\cdot,\lambda) - \bar{u}_{\epsilon,+}(p,\cdot,\lambda)) \\
&=  \int_{\T} dk ~ \operatorname{Re} (\widetilde{W}_{\epsilon,+}^{(0)} \bar{w}_{\epsilon,+}^{*} + \sum_{\iota = \pm} \widetilde{U}_{\epsilon,\iota}^{(0)} \bar{u}_{\epsilon,-}^{*})(p,k,\lambda) + \frac{\gamma \epsilon^{2} p^{2}}{f_{\theta,s}(\epsilon)} \int_{\T} dk ~ \mathbf{r}_{\epsilon}(p,k,\lambda),
\end{align*}
where $\mathbf{r}_{\epsilon}(p,k,\lambda)$ is a remainder term which satisfies
\begin{align*}
\| \mathbf{r}_{\epsilon}(p,k,\lambda) \|_{\mathbb{L}^{1}(\T)} \lesssim \| \bar{w}_{\epsilon,+}(p,\cdot,\lambda) \|_{\mathbb{L}^{2}(\T)}^{2} + \sum_{\iota = \pm} \| \bar{u}_{\epsilon,\iota}(p,\cdot,\lambda) \|_{\mathbb{L}^{2}(\T)}^{2} .
\end{align*}
By using Young's inequality, we have
\begin{align*}
&\epsilon p \int_{\T} dk ~ R^{'}(k) \operatorname{Im}(\bar{w}_{\epsilon,+}^{*}\bar{u}_{\epsilon,-})(p,k,\lambda) \\
& \quad \ge - 2 \int_{\T} dk ~ R(k) |\bar{u}_{\epsilon,-}(p,k,\lambda)|^{2} - \frac{\epsilon^{2} p^{2}}{8} \int_{\T} dk ~ \frac{(R^{'}(k))^{2}}{R(k)} |\bar{w}_{\epsilon,+}(p,k,\lambda)|^{2}.
\end{align*}
From the above, $(\ref{initialbound2})$ and Proposition $\ref{forlaplace}$, we get $(\ref{wigestimate1})$. 

Next we will verify that
\begin{align} \label{wigestimate2}
\overline{\lim_{\epsilon \to 0}} \frac{\gamma}{f_{\theta,s}(\epsilon)} \sup_{|p| \le M, \lambda \in I} \int_{\T} dk ~ R(k) (\frac{\bar{\omega}_{\epsilon}(p,k)}{\lambda f_{\theta,s}(\epsilon) + 2 \gamma R(k)})^{2} |\bar{u}_{\epsilon,+}(p,k,\lambda)|^{2} \le C_{M,I}.
\end{align}
From $(\ref{laplaceawigner-})$, we have 
\begin{align*}
&\frac{\bar{\omega}_{\epsilon}(p,k)}{\lambda f_{\theta,s}(\epsilon) + 2 \gamma R(k)} \bar{u}_{\epsilon,+}(p,k,\lambda) \\
& \quad = \bar{u}_{\epsilon,-} + \frac{1}{\lambda f_{\theta,s}(\epsilon) + 2 \gamma R} \{ f_{\theta,s}(\epsilon) \widetilde{U}_{\epsilon,-}^{(0)} - \frac{\sqrt{-1} \gamma R^{'} \epsilon p}{2} ( \bar{w}_{\epsilon,+} - \bar{w}_{\epsilon,-} ) + \gamma \epsilon^{2} \mathbf{r}_{\epsilon}^{(2,-)} \}
\end{align*} 
By using Shwartz inequality, we obtain
\begin{align*}
&\int_{\T} dk ~ R(k) (\frac{\bar{\omega}_{\epsilon}(p,k)}{\lambda f_{\theta,s}(\epsilon) + 2 \gamma R(k)})^{2} |\bar{u}_{\epsilon,+}(p,k,\lambda)|^{2} \notag \\
& \lesssim \int_{\T} dk ~ R(k) \{ (\frac{f_{\theta,s}(\epsilon)}{\lambda f_{\theta,s}(\epsilon) + 2 \gamma R(k)})^{2} |\widetilde{U}_{\epsilon,-}^{(0)}(p,k)|^{2} + (\frac{\gamma \epsilon^{2} p^{2}}{\lambda f_{\theta,s}(\epsilon) + 2 \gamma R(k)})^{2} |\mathbf{r}_{\epsilon}^{(2,-)}|^{2} \} \notag \\
& \quad + \int_{\T} dk ~ R(k)|\bar{u}_{\epsilon,-}(p,k,\lambda)|^{2} + \int_{\T} dk ~ R(k)(\frac{\gamma R^{'}(k) \epsilon p}{\lambda f_{\theta,s}(\epsilon) + 2 \gamma R(k)})^{2} |\bar{w}_{\epsilon,+}(p,k,\lambda)|^{2} .
\end{align*}
Since $(\frac{1}{\lambda f_{\theta,s}(\epsilon) + 2 \gamma R(k)})^{2} \le \frac{1}{8 \lambda f_{\theta,s}(\epsilon) \gamma R(k)}$, we have 
\begin{align*}
&\int_{\T} dk ~ R(k) \{ (\frac{f_{\theta,s}(\epsilon)}{\lambda f_{\theta,s}(\epsilon) + 2 \gamma R(k)})^{2} |\widetilde{U}_{\epsilon,-}^{(0)}(p,k)|^{2} + (\frac{\gamma \epsilon^{2} p^{2}}{\lambda f_{\theta,s}(\epsilon) + 2 \gamma R(k)})^{2} |\mathbf{r}_{\epsilon}^{(2,-)}|^{2} \} \\
& \quad \lesssim \frac{f_{\theta,s}(\epsilon)}{\gamma} + \frac{\gamma \epsilon^{4}}{f_{\theta,s}(\epsilon)} 
\end{align*}
for any $|p| \le M, \lambda \in I$.
By using $(\ref{wigestimate1})$ and the uniform boundness $0 \le \frac{(R^{'}(k))^{2}}{R(k)} \lesssim 1 , k \in \T$, we have
\begin{align*}
&\int_{\T} dk ~ R(k)|\bar{u}_{\epsilon,-}(p,k,\lambda)|^{2} + \int_{\T} dk ~ R(k)(\frac{\gamma R^{'}(k) \epsilon p}{\lambda f_{\theta,s}(\epsilon) + 2 \gamma R(k)})^{2} |\bar{w}_{\epsilon,+}(p,k,\lambda)|^{2} \\
& \quad \lesssim \frac{f_{\theta,s}(\epsilon)}{\gamma} + \frac{\epsilon^{2}}{\gamma} 
\end{align*}
for any $|p| \le M, \lambda \in I$. Therefore we obtain $(\ref{wigestimate2})$.

Finally we will show that
\begin{align}\label{wigestimate3}
\overline{\lim_{\epsilon \to 0}} \frac{\gamma}{f_{\theta,s}(\epsilon)} \sup_{|p| \le M, \lambda \in I} \mathcal{E}(\bar{u}_{\epsilon,+}(p,\cdot,\lambda)) \le C_{M,I}.
\end{align}
If we get $(\ref{wigestimate3})$, then from $(\ref{wigestimate1}), (\ref{wigestimate2})$ and $(\ref{wigestimate3})$ we obtain the estimate $(\ref{Wigestimate})$. Since
\begin{align*}
\mathcal{E}(\bar{u}_{\epsilon,+}(p,\cdot,\lambda)) \lesssim \int_{T} R(k) |\bar{u}_{\epsilon,+}(p,k,\lambda)|^{2},
\end{align*}
for any $p \in \R$, $\lambda > 0$, it is sufficient to show that
\begin{align}\label{wigestimate4}
\overline{\lim_{\epsilon \to 0}} \frac{\gamma}{f_{\theta,s}(\epsilon)} \sup_{|p| \le M, \lambda \in I} \int_{T} R(k) |\bar{u}_{\epsilon,+}(p,k,\lambda)|^{2} \le C_{M,I}.
\end{align}
To prove $(\ref{wigestimate4})$, we divide the domain of the integration into two parts. Since $R(k) \lesssim \frac{f_{\theta,s}(\epsilon)}{\gamma}$ on $\{ |k| \le (\frac{f_{\theta,s}(\epsilon)}{\gamma})^{\frac{1}{2}} \}$, we have
\begin{align*}
\int_{|k| \le (\frac{f_{\theta,s}(\epsilon)}{\gamma})^{\frac{1}{2}}} dk ~ R(k) |\bar{u}_{\epsilon,+}(p,k,\lambda)|^{2} \lesssim \frac{f_{\theta,s}(\epsilon)}{\gamma}.
\end{align*}
On the other hand, since $(f_{\theta,s}(\epsilon))^{-\frac{5 - \theta}{2}} \gamma^{-\frac{\theta - 1}{2}} \lesssim (\frac{\bar{\omega}_{\epsilon}(p,k)}{\lambda f_{\theta,s}(\epsilon) + 2 \gamma R(k)})^{2}$ on $\{ |k| > (\frac{f_{\theta,s}(\epsilon)}{\gamma})^{\frac{1}{2}} \}$, from $(\ref{wigestimate2})$ we have
\begin{align*}
&\int_{|k| > (\frac{f_{\theta,s}(\epsilon)}{\gamma})^{\frac{1}{2}}} dk ~ R(k) |\bar{u}_{\epsilon,+}(p,k,\lambda)|^{2} \\
&\lesssim (f_{\theta,s}(\epsilon))^{\frac{5 - \theta}{2}} \gamma^{\frac{\theta - 1}{2}} \int_{\T} dk ~ R(k) (\frac{\bar{\omega}_{\epsilon}(p,k)}{\lambda f_{\theta,s}(\epsilon) + 2 \gamma R(k)})^{2} |\bar{u}_{\epsilon,+}(p,k,\lambda)|^{2} \\
&\lesssim (f_{\theta,s}(\epsilon))^{\frac{7 - \theta}{2}} \gamma^{- \frac{3-\theta}{2}}.
\end{align*}
Therefore we have $(\ref{wigestimate4})$.
\end{proof}

\subsection{Homogenization of a limit point of the Wigner distribution}

In this subsection, we will show the following homogenization result.
\begin{theorem}
Suppose the same assumption of Theorem $\ref{main0}$. For any constant $M > 0$ and a compact interval $I \subset (0,\infty)$, we have 
\begin{align}
&\lim_{\epsilon \to 0} \sup_{\lambda \in I} \sup_{|p| \le M} \int_{\T} dk ~ |\bar{w}_{\epsilon,+}(p,k,\lambda) - w_{\epsilon,i}(p,\lambda)| = 0, \quad i=1,2, \label{homow} \\
&\lim_{\epsilon \to 0} \sup_{\lambda \in I} \sup_{|p| \le M} \int_{\T} dk ~ |\bar{u}_{\epsilon,\iota}(p,k,\lambda)| = 0, \quad \iota = \pm, \label{homou}
\end{align}
where
\begin{align*}
w_{\epsilon,i}(p,\lambda) := \int_{\T} dk ~ \mathbf{e}_{i}(k) \bar{w}_{\epsilon,+}(p,k,\lambda), \quad i=1,2 
\end{align*}
and $\mathbf{e}_{i}(k), i = 1,2$ are defined in $(\ref{defofe})$.
\end{theorem}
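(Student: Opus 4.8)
The plan is to extract the full strength of Propositions~\ref{forlaplace} and \ref{wignerestimate} and feed it into a Poincar\'e inequality for the scattering operator $\mathcal{L}$. First, since $\gamma\epsilon^{2}/f_{\theta,s}(\epsilon)\to 0$ in all four regimes of $(\ref{timescaling})$ (recall $s<1$), Proposition~\ref{forlaplace} gives $\mathbf{E}_{2,\epsilon}(p,t)\lesssim \mathbf{E}_{2,\epsilon}(p,0)\lesssim K_{1}$ uniformly for $|p|\le M$ and $t$ in bounded sets; a routine Laplace-transform bound (valid once $\epsilon$ is small enough that the exponent in Proposition~\ref{forlaplace} is $\le\lambda/2$ for all $\lambda\in I$) then yields
\[
\sup_{|p|\le M,\ \lambda\in I}\Bigl(\|\bar{w}_{\epsilon,+}(p,\cdot,\lambda)\|_{\mathbb{L}^{2}(\T)}^{2}+\sum_{\iota=\pm}\|\bar{u}_{\epsilon,\iota}(p,\cdot,\lambda)\|_{\mathbb{L}^{2}(\T)}^{2}\Bigr)\le C_{M,I}.
\]
Second, from $(\ref{Wigestimate})$ together with $(\ref{wigestimate1})$ and $(\ref{wigestimate4})$ in the proof of Proposition~\ref{wignerestimate}, and using once more that $f_{\theta,s}(\epsilon)/\gamma=\gamma_{0}^{-1}\epsilon^{-s}f_{\theta,s}(\epsilon)\to 0$ for $s<1$, we obtain
\[
\sup_{|p|\le M,\ \lambda\in I}\Bigl(\mathcal{E}\bigl(\bar{w}_{\epsilon,+}(p,\cdot,\lambda)\bigr)+\sum_{\iota=\pm}\int_{\T}dk\ R(k)\,|\bar{u}_{\epsilon,\iota}(p,k,\lambda)|^{2}\Bigr)\ \lesssim\ \frac{f_{\theta,s}(\epsilon)}{\gamma}\ \longrightarrow\ 0 .
\]
So all three families are bounded in $\mathbb{L}^{2}(\T)$, while $\bar{w}_{\epsilon,+}$ has vanishing Dirichlet energy and the $\bar{u}_{\epsilon,\iota}$ vanish in the $R$-weighted $\mathbb{L}^{2}$ norm.

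The analytic heart is a \emph{weighted} Poincar\'e inequality for $\mathcal{L}$: there is $c_{*}>0$ with
\[
\int_{\T}dk\ R(k)\,\bigl|f(k)-\langle f\rangle_{\pi}\bigr|^{2}\ \le\ c_{*}^{-1}\,\mathcal{E}(f),\qquad \langle f\rangle_{\pi}:=\int_{\T}f\,d\pi,\qquad f\in\mathbb{L}^{2}(\T).
\]
Indeed $\mathcal{E}(f)=\langle f,-\mathcal{L}f\rangle_{\mathbb{L}^{2}(\T)}$, and $-\mathcal{L}$ equals multiplication by the nonnegative scattering rate $\varrho(k):=2\int_{\T}R(k,k')\,dk'$ minus the integral operator with kernel $2R(k,k')$; since $\varrho\asymp R$ on $\T$ and $R(k,k')/\sqrt{\varrho(k)\varrho(k')}$ is a bounded (in particular square-integrable) kernel — its potential blow-up on $\{k=0\}\cup\{k'=0\}$ being cancelled by $R(k,k')\asymp k^{2}$ there — conjugation by $\varrho^{1/2}$ reduces the inequality to the statement that the compact self-adjoint operator $Q$ with that kernel has its top eigenvalue $1$, whose eigenfunction $\varrho^{1/2}$ corresponds to the constant mode of $\mathcal{L}$, simple and isolated; this holds because the jump chain with transition density $R(k,k')/R(k)$ is irreducible. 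It is essential that this is \emph{not} a spectral gap for $\mathcal{L}$ on $\mathbb{L}^{2}(\T)$: since $-\mathcal{L}$ is a compact perturbation of multiplication by $\varrho$ and $\varrho(k)\to 0$ as $k\to 0$, one has $0\in\sigma_{\mathrm{ess}}(-\mathcal{L})$ with the constant mode embedded in it, and it is precisely this degeneracy of the scattering rate at the origin that forces the anomalous space--time scaling $f_{\theta,s}(\epsilon)$.

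Applying the weighted Poincar\'e inequality to $f=\bar{w}_{\epsilon,+}(p,\cdot,\lambda)$ gives $\int_{\T}R(k)\,|\bar{w}_{\epsilon,+}-w_{\epsilon,1}|^{2}\,dk\lesssim f_{\theta,s}(\epsilon)/\gamma\to 0$ uniformly in $|p|\le M,\ \lambda\in I$, where $w_{\epsilon,1}=\langle\bar{w}_{\epsilon,+}\rangle_{\pi}$ is the average appearing in $(\ref{defofe})$ with $i=1$, the weight $\mathbf{e}_{1}$ being the invariant density up to normalization; since $\mathbf{e}_{2}$ also has total mass one and is a bounded function, $w_{\epsilon,1}-w_{\epsilon,2}=\int_{\T}(\mathbf{e}_{1}-\mathbf{e}_{2})(\bar{w}_{\epsilon,+}-w_{\epsilon,1})\,dk$, so once $(\ref{homow})$ is proved for $i=1$ it follows for $i=2$ as well. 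To convert the $R$-weighted $\mathbb{L}^{2}$ smallness of $g_{\epsilon}:=\bar{w}_{\epsilon,+}-w_{\epsilon,1}$ (resp.\ $g_{\epsilon}:=\bar{u}_{\epsilon,\iota}$) into the $\mathbb{L}^{1}(\T)$ statements $(\ref{homow})$, $(\ref{homou})$, split $\T=\{|k|<\delta\}\cup\{|k|\ge\delta\}$ for a small parameter $\delta>0$: on the bulk $\{|k|\ge\delta\}$, Cauchy--Schwarz against $\int_{|k|\ge\delta}R(k)^{-1}\,dk<\infty$ turns $\int R\,|g_{\epsilon}|^{2}\to 0$ into $\int_{|k|\ge\delta}|g_{\epsilon}|\to 0$; on $\{|k|<\delta\}$ the uniform $\mathbb{L}^{2}(\T)$ bound of the first paragraph (and $|w_{\epsilon,i}|\lesssim C_{M,I}$) gives $\int_{|k|<\delta}|g_{\epsilon}|\lesssim\sqrt{\delta}$, uniformly in $\epsilon$. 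Letting $\epsilon\to 0$ and then $\delta\to 0$ yields $(\ref{homow})$ for $i=1$ and $(\ref{homou})$, hence also $(\ref{homow})$ for $i=2$.

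The main obstacle is exactly this loss of the spectral gap at $k=0$: one cannot homogenize $\bar{w}_{\epsilon,+}$ by a naive spectral argument on $\mathbb{L}^{2}(\T)$, and is forced to combine the $R$-weighted Poincar\'e inequality with the crude, $\epsilon$-uniform $\mathbb{L}^{2}(\T)$ control near the origin supplied by Proposition~\ref{forlaplace}. Everything else is routine; in particular the uniformity over $|p|\le M$ and $\lambda\in I$ required in $(\ref{homow})$--$(\ref{homou})$ is inherited directly from the way Propositions~\ref{forlaplace} and \ref{wignerestimate} are stated, so it needs no separate argument.
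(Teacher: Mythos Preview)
Your strategy is sound, but there is one concrete error: you claim that ``the weight $\mathbf{e}_{1}$ [is] the invariant density up to normalization'' and hence that $w_{\epsilon,1}=\langle\bar{w}_{\epsilon,+}\rangle_{\pi}$. This is false. From Lemma~\ref{expressionR} one has $R(k)=\tfrac{3}{4}(\mathbf{e}_{1}(k)+\mathbf{e}_{2}(k))$, so $\pi(dk)=\tfrac{2}{3}R(k)\,dk=\tfrac{1}{2}(\mathbf{e}_{1}+\mathbf{e}_{2})(k)\,dk$ and therefore $\langle\bar{w}_{\epsilon,+}\rangle_{\pi}=\tfrac{1}{2}(w_{\epsilon,1}+w_{\epsilon,2})$, not $w_{\epsilon,1}$. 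The repair is immediate and already implicit in your own bootstrap: your weighted Poincar\'e inequality (which is correct --- in fact the integral operator with kernel $R(k,k')$ has rank two by Lemma~\ref{expressionR}, so the spectral gap for $Q$ is explicit) gives $\int_{\T}R\,|\bar{w}_{\epsilon,+}-\langle\bar{w}_{\epsilon,+}\rangle_{\pi}|^{2}\,dk\to 0$; your $\delta$-splitting then yields $\int_{\T}|\bar{w}_{\epsilon,+}-\langle\bar{w}_{\epsilon,+}\rangle_{\pi}|\,dk\to 0$; finally the identity $w_{\epsilon,i}-\langle\bar{w}_{\epsilon,+}\rangle_{\pi}=\int_{\T}\mathbf{e}_{i}\,(\bar{w}_{\epsilon,+}-\langle\bar{w}_{\epsilon,+}\rangle_{\pi})\,dk$ with $\mathbf{e}_{i}$ bounded and of unit mass recovers $(\ref{homow})$ for \emph{both} $i=1,2$ simultaneously.

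For comparison, the paper avoids the abstract Poincar\'e inequality altogether by exploiting the rank-two factorization $R(k,k')=\tfrac{3}{4}\sum_{i}\mathbf{e}_{i}(k)\mathbf{e}_{3-i}(k')$ directly: writing $\int_{\T}\mathbf{e}_{i}\,|\bar{w}_{\epsilon,+}-w_{\epsilon,i}|\le\int_{\T^{2}}\mathbf{e}_{i}(k)\mathbf{e}_{i}(k')\,|\bar{w}_{\epsilon,+}(k)-\bar{w}_{\epsilon,+}(k')|$ and applying Cauchy--Schwarz against $R(k,k')$ gives the bound $\mathcal{E}(\bar{w}_{\epsilon,+})^{1/2}\bigl(\int_{\T^{2}}\mathbf{e}_{i}^{2}(k)\mathbf{e}_{i}^{2}(k')/R(k,k')\bigr)^{1/2}$, the second factor being finite since $\mathbf{e}_{i}^{2}(k)\mathbf{e}_{i}^{2}(k')/R(k,k')$ is bounded on $\T^{2}$. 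This is shorter and entirely elementary; your route through the weighted Poincar\'e inequality is more robust (it would survive replacing $R(k,k')$ by any irreducible kernel without a finite-rank factorization) at the price of the extra spectral argument. The treatment of $(\ref{homou})$ and the $\delta$-splitting near $k=0$ are essentially the same in both approaches.
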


\begin{proof}
First we show that there is no mass concentration at $k = 0$ macroscopically, that is,
\begin{align}\label{nomass}
\lim_{\delta \to 0} \lim_{\epsilon \to 0} \sup_{\lambda \in I} \sup_{p \le M} \int_{|k| < \delta} dk ~ \sum_{\iota} |\bar{w}_{\epsilon,\iota}(p,k,\lambda)| + |\bar{u}_{\epsilon,\iota}(p,k,\lambda)| = 0. 
\end{align}
Actually, by Schwartz's inequality, we have
\begin{align*}
\Bigl( \int_{|k| < \delta} dk ~ \sum_{\iota} |\bar{w}_{\epsilon,\iota}(p,k,\lambda)|+ |\bar{u}_{\epsilon,\iota}(p,k,\lambda)| \Bigr)^{2} \le 2 \delta \int_{\T} dk ~ |\bar{w}_{\epsilon,\iota}(p,k,\lambda)|^{2} + |\bar{u}_{\epsilon,\iota}(p,k,\lambda)|^{2}
\end{align*}
for $0 < \delta < \frac{1}{2}$. From the assumption $(\ref{initialbound2})$ and Proposition $\ref{forlaplace}$, we obtain 
\begin{align*}
\int_{\T} dk ~ |\bar{w}_{\epsilon,\iota}(p,k,\lambda)|^{2} &\le \frac{1}{\lambda} \int_{0}^{\infty} dt ~ \exp \{ - \lambda t \} \int_{\T} dk ~ |\widetilde{W}_{\epsilon,+}(p,k,t)|^{2} \\
&\le \frac{\mathbf{E}_{2,\epsilon}(p,0)}{\lambda} \int_{0}^{\infty} dt ~ \exp \{ - ( \lambda -C_{1} \frac{\epsilon^{2} p^{2}}{f_{\theta,s}(\epsilon)} )t \} \\
&\le \frac{2K_{1}}{\lambda( \lambda -C_{1} \frac{\epsilon^{2} M^{2}}{f_{\theta,s}(\epsilon)} )} , \\
\int_{\T} dk ~ |\bar{u}_{\epsilon,\iota}(p,k,\lambda)|^{2} &\le \frac{2K_{1}}{\lambda( \lambda -C_{1} \frac{\epsilon^{2}M^{2}}{f_{\theta,s}(\epsilon)} )} 
\end{align*}
for sufficiently small $\epsilon > 0$ and the above estimates imply $(\ref{nomass})$. 

Thanks to $(\ref{nomass})$, to verify $(\ref{homow})$ it is sufficient to show
\begin{align}\label{sufficientw}
\lim_{\epsilon \to 0} \sup_{\lambda \in I} \sup_{|p| \le M} \int_{\T} dk ~ \mathbf{e}_{i}(k) |\bar{w}_{\epsilon,+}(p,k,\lambda) - w_{\epsilon,i}(p,\lambda)| = 0, \quad i=1,2.
\end{align}
From the definition of $w_{\epsilon,i}$ and the property $\int_{\T} dk ~ \mathbf{e}_{i}(k) = 1$, we have
\begin{align*}
&\int_{\T} dk ~ \mathbf{e}_{i}(k) |\bar{w}_{\epsilon,+}(p,k,\lambda) - w_{\epsilon,i}(p,\lambda)| \\
& \quad \le \int_{\T^{2}} dkdk' ~ \mathbf{e}_{i}(k) \mathbf{e}_{i}(k') |\bar{w}_{\epsilon,+}(p,k,\lambda) - \bar{w}_{\epsilon,+}(p,k',\lambda)| \\
& \quad \le \Bigl( \int_{\T^{2}} dkdk' ~ R(k,k')|\bar{w}_{\epsilon,+}(p,k,\lambda) - \bar{w}_{\epsilon,+}(p,k',\lambda)|^{2} \Bigr)^{\frac{1}{2}} \Bigl( \int_{\T^{2}} dkdk' ~ \frac{\mathbf{e}_{i}^{2}(k) \mathbf{e}_{i}^{2}(k')}{R(k,k')} \Bigr)^{\frac{1}{2}}.
\end{align*}
Since $\frac{\mathbf{e}_{i}^{2}(k) \mathbf{e}_{i}^{2}(k')}{R(k,k')} , i = 1,2$ are uniformly bounded in $k,k' \in \T$, by using Proposition $\ref{wignerestimate}$ we have $(\ref{sufficientw})$. 

To prove $(\ref{homou})$, it is sufficient to show that 
\begin{align*}
&\lim_{\epsilon \to 0} \sup_{\lambda \in I} \sup_{|p| \le M} \int_{\T} dk ~ R(k) |\bar{u}_{\epsilon,\iota}(p,k,\lambda)| = 0, \quad \iota = \pm,
\end{align*}
but this is a direct consequence of Proposition $\ref{wignerestimate}$.
\end{proof}

\subsection{Characterization of a limit point of the Wigner distribution}\label{subseq:character}

In this subsection we characterize a limit point of convergent subsequence of $(\bar{w}_{\epsilon,+}(p,k,\lambda))_{\epsilon}$. \textcolor{black}{From Remark \ref{re:seqcompact} and \eqref{homou}, for any $J \in \mathbb{S}(\R)$ we have
\begin{align*}
\int_{0}^{\infty} dt ~ e^{-\lambda t} <W(t),J> &= \lim_{\epsilon \to 0} \int_{0}^{\infty} dt ~ e^{-\lambda t} <W_{\epsilon}(t),J> \\
&=  \lim_{\epsilon \to 0} \int_{\R} dp \bigr( \int_{\T} dk ~ \bar{w}_{\epsilon,+}(p,k,\lambda) \bigl) \widetilde{J}(p) \\
&= \lim_{\epsilon \to 0} \int_{\R} dp ~ w_{\epsilon,i}(p,\lambda) \widetilde{J}(p) \\
&= \int_{\R} dp ~ w(p,\lambda) \widetilde{J}(p), 
\end{align*}
where $w(p,\lambda)$ is the Laplace-Fourier transform of $<W(t),\cdot> = \mu(t)(dy)$: 
\begin{align*}
w(p,\lambda) = \int_{0}^{\infty} dt ~ e^{-\lambda t} \widetilde{W}(p,t), \quad \widetilde{W}(p,t) = \int_{\R} \mu(t)(dy) ~ e^{2 \pi \sqrt{-1} p y}.
\end{align*}
In the present subsection we will show the following equation:
\begin{align}\label{laplacefouriergoal}
\int_{\R} dp ~(\lambda + C_{\theta,\gamma_{0}} |p|^{\frac{6}{7 - \theta}}) w(p,\lambda) J(p) = \int_{\R} dp ~ \widetilde{W}_{0}(p) J(p),
\end{align}
for any $J \in \mathbb{S}(\R)$ and $\lambda > 0$. By using one-to-one correspondence of the Laplace-Fourier transform, we see that any limit of convergent subsequence $W(y,t)$ is given by
\begin{align*}
\widetilde{W}(p,t) = e^{-C_{\theta,\gamma_{0}} |p|^{-\frac{6}{7-\theta}} t} \widetilde{W}_{0}(p) , 
\end{align*}
almost every $(p,t)$. We have thus established the theorem.}

Now we will derive (\ref{laplacefouriergoal}). First we rewrite $(\ref{laplacewigner+})$ as follows:
\begin{align}
D_{\epsilon} \bar{w}_{\epsilon,+} = f_{\theta,s}(\epsilon) \widetilde{W}_{\epsilon,+}^{(0)} + \frac{3\gamma}{2} \sum_{i = 1,2} \mathbf{e}_{i} w_{\epsilon,3-i} + q_{\epsilon}, \label{rewritewigner+}
\end{align}
where 
\begin{align*}
D_{\epsilon}(p,k,\lambda) &:= f_{\theta,s}(\epsilon) \lambda + 2 \gamma R(k) + \sqrt{-1} \epsilon \delta_{\epsilon} \omega(p,k), \\
q_{\epsilon}(p,k,\lambda) &:= - \sqrt{-1} \gamma R^{'} \epsilon p \bar{u}_{\epsilon,-}(p,k,\lambda) - \gamma (\mathcal{L} \bar{u}_{\epsilon,+})(p,k,\lambda) + \gamma \epsilon^{2} p^{2} \mathbf{r}_{\epsilon}^{(1,+)}(p,k,\lambda).
\end{align*}
Multiplying both sides of $(\ref{rewritewigner+})$ by $\frac{\gamma \mathbf{e}_{i}}{f_{\theta,s}(\epsilon) D_{\epsilon}}$ and taking the integral with respect to $k \in \T$, we have
\begin{align*}
&\Bigl( \int_{\T} dk ~ \frac{\gamma}{f_{\theta,s}(\epsilon)} (1 -\frac{3 \mathbf{e}_{i}\mathbf{e}_{3-i}}{2 D_{\epsilon}}) \Bigr) w_{\epsilon,i} - \Bigl( \frac{3}{2} \int_{\T} dk ~ \frac{\gamma^{2} \mathbf{e}_{i}^{2}}{D_{\epsilon}} \Bigr) w_{\epsilon,3-i} \notag \\
& \quad = \int_{\T} dk ~ \frac{\gamma \mathbf{e}_{i} \widetilde{W}_{\epsilon,+}^{(0)}}{D_{\epsilon}} + \int_{\T} dk ~ \frac{\gamma \mathbf{e}_{i} q_{\epsilon}}{f_{\theta,s}(\epsilon) D_{\epsilon}}, i=1,2.
\end{align*}
Adding sideways the above equations corresponding to both values of $i$, we have
\begin{align*}
a_{\epsilon} w_{\epsilon,1} - b_{\epsilon}(w_{\epsilon,1} - w_{\epsilon,2}) = \int_{\T} dk ~ \frac{2 \gamma R \widetilde{W}_{\epsilon,+}^{(0)}}{D_{\epsilon}} + \int_{\T} dk ~ \frac{2 \gamma R q_{\epsilon}}{f_{\theta,s}(\epsilon) D_{\epsilon}},
\end{align*}
where
\begin{align}
a_{\epsilon}(p,\lambda) &:= \int_{\T} dk ~ \frac{2 \gamma R}{f_{\theta,s}(\epsilon)} (1 - \frac{2 \gamma R}{D_{\epsilon}}) , \label{superdiffco} \\
b_{\epsilon}(p,\lambda) &:= \frac{3}{2} \int_{\T} dk ~ \frac{\gamma \mathbf{e}_{i}}{f_{\theta,s}(\epsilon)} (1 - \frac{2 \gamma R}{D_{\epsilon}}) \notag .
\end{align}
From \eqref{homow}, \eqref{homou} and the following proposition  we obtain (\ref{laplacefouriergoal}).
\begin{proposition}\label{prop:coefficient}
Suppose the same assumptions of Theorem $\ref{main0}$. For any $J \in \mathbb{S}(\R)$ and $\lambda > 0$, we have
\begin{align}
&\lim_{\epsilon \to 0} \int_{\R} dp ~ | a_{\epsilon}(p,\lambda) - \lambda - C_{\theta,\gamma_{0}} |p|^{\frac{6}{7 - \theta}} | |J(p)| = 0 \quad 2 < \theta \le 3, \label{mainterm} \\
&\lim_{\epsilon \to 0} \int_{\R} dp ~ | a_{\epsilon}(p,\lambda) - \lambda - C_{\theta,\gamma_{0}} |p|^{\frac{3}{2}} | |J(p)| = 0 \quad \theta > 3, \label{mainterm1} \\
&\lim_{\epsilon \to 0} \int_{\R} dp ~ |b_{\epsilon}(p,\lambda)(w_{\epsilon,1} - w_{\epsilon,2})(p,\lambda)| |J(p)| = 0, \label{vanish1} \\
&\lim_{\epsilon \to 0} | \int_{\R \times \T} dpdk ~ \frac{2 \gamma R(k) \widetilde{W}_{\epsilon,+}^{(0)}(p,k)}{D_{\epsilon}} J(p) - \int_{\R} dp ~ \widetilde{W}_{0}(p) J(p)| = 0, \label{initialterm} \\
&\lim_{\epsilon \to 0} \int_{\R} dp |\int_{\T} dk ~ \frac{\gamma R q_{\epsilon}}{f_{\theta,s}(\epsilon) D_{\epsilon}}| |J(p)| = 0, \label{vanish2} \end{align}
where $C_{\theta,\gamma_{0}}$ is a positive constant defined in $(\ref{diffcoeffi})$.
\end{proposition}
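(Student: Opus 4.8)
\noindent\emph{Strategy.} Put $g=f_{\theta,s}(\epsilon)$, so that $D_{\epsilon}=g\lambda+2\gamma R+\sqrt{-1}\epsilon\delta_{\epsilon}\omega$, and recall $\gamma=\epsilon^{s}\gamma_{0}$ with $0\le s<1$. The computation rests on the identity $1-\frac{2\gamma R}{D_{\epsilon}}=\frac{g\lambda+\sqrt{-1}\epsilon\delta_{\epsilon}\omega}{D_{\epsilon}}$; rationalising and discarding the odd-in-$k$ imaginary cross term (recall that $\delta_{\epsilon}\omega(p,\cdot)$ is odd and $R,|D_{\epsilon}|^{2}$ are even in $k$) yields the exact splitting $a_{\epsilon}=a_{\epsilon}^{(1)}+a_{\epsilon}^{(2)}$ with
\begin{align*}
a_{\epsilon}^{(1)}(p,\lambda):=\int_{\T}\frac{2\gamma R\,\lambda\,(g\lambda+2\gamma R)}{|D_{\epsilon}|^{2}}\,dk,\qquad a_{\epsilon}^{(2)}(p,\lambda):=\int_{\T}\frac{2\gamma R\,(\epsilon\delta_{\epsilon}\omega)^{2}}{g\,|D_{\epsilon}|^{2}}\,dk.
\end{align*}
The integrand of $a_{\epsilon}^{(1)}$ is bounded by $\lambda$, and since $s<1$ forces both $g\ll\gamma$ and $\epsilon\ll\gamma$, for each fixed $k\neq0$ the term $2\gamma R(k)$ dominates the other two in $D_{\epsilon}$, so that $a_{\epsilon}^{(1)}\to\int_{\T}\lambda\,dk=\lambda$ by dominated convergence, uniformly for $|p|\le M$ and $\lambda\in I$. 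Thus \eqref{mainterm}--\eqref{mainterm1} reduce to identifying $\lim_{\epsilon\to0}a_{\epsilon}^{(2)}(p,\lambda)$.

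\noindent\emph{The term $a_{\epsilon}^{(2)}$.} For each fixed $\delta>0$ the contribution of $\{|k|>\delta\}$ to $a_{\epsilon}^{(2)}$ is $O\big(\epsilon^{2}/(g\gamma)\big)$, which tends to $0$ in every regime of \eqref{timescaling} (there $R\gtrsim1$ and $\delta_{\epsilon}\omega$ is bounded), so the integral concentrates near $k=0$. On $\{|k|<\delta\}$ I would substitute the asymptotics $R(k)\sim6\pi^{2}k^{2}$ and, for $|k|\gg\epsilon|p|$, $\delta_{\epsilon}\omega(p,k)\sim p\,\omega^{'}(k)$, with $\omega^{'}(k)\sim c\,|k|^{-(3-\theta)/2}$ when $2<\theta<3$, $\omega^{'}(k)\sim2\pi\sqrt{-\log|k|}$ when $\theta=3$, and $\omega^{'}(k)\to\sqrt{C(\theta)}$ when $\theta>3$; the hypothesis $\theta>2$ guarantees that $\omega^{'}$ has an integrable singularity at $0$ and that the transition scale $k_{*}=k_{*}(\epsilon,p)$, defined by $2\gamma R(k_{*})\asymp\epsilon|\delta_{\epsilon}\omega(p,k_{*})|$, satisfies $k_{*}\gg\epsilon$, so the cutoff $|k|\gg\epsilon|p|$ is harmless. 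Rescaling $k=k_{*}\xi$ turns $a_{\epsilon}^{(2)}$ into $(1+o(1))$ times $g^{-1}$ times a product of powers of $\epsilon,\gamma,k_{*},|p|$ times the elementary integral $\int_{0}^{\infty}\frac{\xi^{2}}{1+\xi^{7-\theta}}\,d\xi=\frac{\pi}{7-\theta}\csc\frac{3\pi}{7-\theta}$ (with $\int_{0}^{\infty}\frac{\xi^{2}}{1+\xi^{4}}\,d\xi=\frac{\pi}{2\sqrt2}$ when $\theta>3$); the very definition of $g=f_{\theta,s}(\epsilon)$ in \eqref{timescaling} is chosen so that all powers of $\epsilon$ cancel, leaving $a_{\epsilon}^{(2)}\to C_{\theta,\gamma_{0}}|p|^{6/(7-\theta)}$ when $2<\theta\le3$ and $C_{\theta,\gamma_{0}}|p|^{3/2}$ when $\theta>3$. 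The identity $\csc\frac{3\pi}{7-\theta}=\csc\frac{(4-\theta)\pi}{7-\theta}$ and the factor $2$ from the two half-neighbourhoods of $k=0$ reproduce exactly the constant \eqref{diffcoeffi}. When $\theta=3$ the rescaling is not scale invariant because of the $\sqrt{-\log}$, and $k_{*}(\epsilon,p)$ is only defined implicitly through the inverse function $h_{s}$ appearing in \eqref{timescaling}; one then freezes the slowly varying logarithm over the dyadic ranges of $\xi$, which leaves the limiting constant unchanged. All of this must be carried out uniformly for $|p|\le M$ and $\lambda\in I$.

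\noindent\emph{The remaining assertions.} For \eqref{vanish1}: the same algebraic manipulation applied to $b_{\epsilon}$ (with the functions $\mathbf e_{i}$ in place of $R$) gives a uniform bound $\sup_{|p|\le M,\lambda\in I}|b_{\epsilon}(p,\lambda)|\le C_{M,I}$, while \eqref{homow} yields $w_{\epsilon,1}-w_{\epsilon,2}\to0$ uniformly on $\{|p|\le M\}$; together with $|J|\in\mathbb L^{1}(\R)$ this proves \eqref{vanish1}. For \eqref{initialterm}: write $\frac{2\gamma R}{D_{\epsilon}}=1+\big(\frac{2\gamma R}{D_{\epsilon}}-1\big)$; the $1$ contributes $\int_{\T}\widetilde W_{\epsilon,+}^{(0)}(p,k)\,dk\to\widetilde W_{0}(p)$ by \eqref{initialconvergence}, and the correction is negligible because $\big|\frac{2\gamma R}{D_{\epsilon}}-1\big|\le1$ and tends to $0$ uniformly on $\{|k|>\delta\}$, while the initial data carry no mass at $k=0$ (by \eqref{initialbound2} and Cauchy--Schwarz, $\int_{|k|<\delta}|\widetilde W_{\epsilon,+}^{(0)}(p,\cdot)|\,dk\lesssim\sqrt{\delta}$), the $dk$-integral being bounded uniformly in $p$ by \eqref{initialbound}. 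For \eqref{vanish2}: split $q_{\epsilon}=-\sqrt{-1}\gamma R^{'}\epsilon p\,\bar u_{\epsilon,-}-\gamma\mathcal L\bar u_{\epsilon,+}+\gamma\epsilon^{2}p^{2}\mathbf r_{\epsilon}^{(1,+)}$ and bound each term using $|D_{\epsilon}|\ge2\gamma R$, Propositions \ref{forlaplace} and \ref{wignerestimate} and the vanishing \eqref{homou}: the first and third contribute surplus powers of $\epsilon$ (of order $\epsilon(\gamma/g)^{1/2}$ and $\gamma\epsilon^{2}/g$) that go to $0$, and for the scattering term one first subtracts from the multiplier $\frac{\gamma R}{gD_{\epsilon}}$ its $k$-essentially-constant bulk value, which is legitimate because $\int_{\T}(\mathcal L\bar u_{\epsilon,+})(k)\,dk=0$, so as to recover the smallness needed; integrating against $|J|\in\mathbb L^{1}(\R)$ then closes the estimate.

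\noindent\emph{Main obstacle.} The crux is the rescaling analysis of $a_{\epsilon}^{(2)}$, made uniform in $(p,\lambda)$ — above all the case $\theta=3$, where the transition scale $k_{*}$ is only implicitly defined and the logarithmic factor must be tracked — together with the bookkeeping of the error terms in \eqref{vanish2}, where the crude bound $|D_{\epsilon}|\ge2\gamma R$ alone is not sharp enough and the mean-zero structure of $\mathcal L$ together with the refined $\mathbb L^{2}$-estimates of Proposition \ref{wignerestimate} must be used.
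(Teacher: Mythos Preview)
Your proposal is correct and follows essentially the same route as the paper: the splitting $a_\epsilon=a_\epsilon^{(1)}+a_\epsilon^{(2)}$ via parity, dominated convergence for $a_\epsilon^{(1)}$, a rescaling-to-profile argument for $a_\epsilon^{(2)}$, and the three-term splitting of $q_\epsilon$ together with the mean-zero property of $\mathcal L$ are exactly what the paper does.

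On the two obstacles you flag, the paper's execution is slightly cleaner than your sketch. For $a_\epsilon^{(2)}$ it does \emph{not} first replace $\delta_\epsilon\omega$ by $p\,\omega'(k)$ and then rescale; instead it rescales $k$ immediately by the $p$-dependent factor $\gamma_0^{-2/(7-\theta)}|p|^{2/(7-\theta)}(f_{\theta,s}(\epsilon)/\epsilon^s)^{1/3}$ and computes the pointwise limit of the rescaled $(\delta_\epsilon\omega)_\epsilon$ directly (Appendix~\ref{app:delome}); at $\theta=3$ the logarithm is absorbed into this rescaling and the limit is simply $\operatorname{sgn}(k)\sqrt{C(3)}$, so no dyadic freezing is needed. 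For the $\mathcal L\bar u_{\epsilon,+}$ piece of \eqref{vanish2}, subtracting the bulk constant $\gamma/(2g)$ is only the first step: the remaining multiplier still blows up like $\gamma/g$ near $k=0$, and the paper overcomes this by a further parity reduction and then the explicit rank-two decomposition $\mathcal L f(k)=\tfrac32\sum_i\mathbf e_{3-i}(k)\!\int_\T\mathbf e_i f-2R(k)f(k)$, which supplies a factor $R(k)$ or $\mathbf e_i(k)\lesssim R(k)$ that cancels the singularity; after that \eqref{homou} and the weighted bound $\frac{\gamma}{g}\int_\T R\bigl(\frac{\bar\omega_\epsilon}{g\lambda+2\gamma R}\bigr)^2|\bar u_{\epsilon,+}|^2\lesssim1$ from Proposition~\ref{wignerestimate} finish the job.
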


\subsubsection{Proof of $(\ref{initialterm})$}

From $(\ref{initialconvergence})$, it is sufficient to show that
\begin{align*}
\lim_{\epsilon \to 0} | \int_{\R \times \T} dpdk ~ (\frac{2 \gamma R}{D_{\epsilon}} - 1) \widetilde{W}_{\epsilon,+}^{(0)} J| = 0.
\end{align*}
By Schwartz's inequality, we have
\begin{align*}
| \int_{\R \times \T} dpdk ~ (\frac{2 \gamma R}{D_{\epsilon}} - 1) \widetilde{W}_{\epsilon,+}^{(0)} J|^{2} \le \Bigl( \int_{\R \times \T} dpdk ~ |\frac{2 \gamma R}{D_{\epsilon}} - 1|^{2} |J| \Bigr) \Bigl( \int_{\R \times \T} dpdk ~ |\widetilde{W}_{\epsilon,+}^{(0)}|^{2} |J| \Bigr).
\end{align*}
From $(\ref{initialbound2})$, $\int_{\R \times \T} dpdk ~ |\widetilde{W}_{\epsilon,+}^{(0)}|^{2} |J|$ is bounded above  by $K_{1} \int_{\R} dp |J|$. On the other hand, $\frac{2 \gamma R}{D_{\epsilon}}$ is uniformly bounded in $(p,k) \in \R \times \T , 0 < \epsilon <1$:
\begin{align*}
|\frac{2 \gamma R}{D_{\epsilon}}| &= |\frac{2 \gamma R D_{\epsilon}^{*}}{|D_{\epsilon}|^{2}}| \\
&= | \frac{2 \gamma R (f_{\theta,s}(\epsilon) \lambda + 2 \gamma R - \sqrt{-1} \epsilon \delta_{\epsilon} \omega)}{(f_{\theta,s}(\epsilon) \lambda + 2 \gamma R)^{2} + \epsilon^{2} (\delta_{\epsilon} \omega)^{2}} | \\
&\le \frac{2 \gamma R (f_{\theta,s}(\epsilon) \lambda + 2 \gamma R )}{(f_{\theta,s}(\epsilon) \lambda + 2 \gamma R)^{2} + \epsilon^{2} (\delta_{\epsilon} \omega)^{2}} + \frac{2 \gamma R \epsilon |\delta_{\epsilon} \omega|}{(f_{\theta,s}(\epsilon) \lambda + 2 \gamma R)^{2} + \epsilon^{2} (\delta_{\epsilon} \omega)^{2}} \\
&\le \frac{2 \gamma R}{f_{\theta,s}(\epsilon) \lambda + 2 \gamma R} + \frac{2 \gamma R \epsilon |\delta_{\epsilon} \omega|}{4 (f_{\theta,s}(\epsilon) \lambda + 2 \gamma R) \epsilon |\delta_{\epsilon} \omega| } \\
&\le \frac{5}{4},
\end{align*}
where we use a fundamental inequality $\frac{a + b}{2} \ge \sqrt{ab}, a,b \ge 0$. In addition, $\lim_{\epsilon \to 0} \sup_{\lambda \in I} \frac{2 \gamma R}{D_{\epsilon}} = 1$ for all $(p,k) \in \R \times \T$. By the dominated convergence theorem, we conclude the proof of $(\ref{initialterm})$.

\subsubsection{Proof of $(\ref{mainterm})$, $(\ref{mainterm1})$}

From $(\ref{superdiffco})$, we have
\begin{align*}
a_{\epsilon} &= \int_{\T} dk ~ \frac{2 \gamma R}{f_{\theta,s}(\epsilon)} \frac{(f_{\theta,s}(\epsilon) \lambda + \sqrt{-1} \epsilon \delta_{\epsilon} \omega) (f_{\theta,s}(\epsilon) \lambda + 2 \gamma R - \sqrt{-1} \epsilon \delta_{\epsilon} \omega)}{|D_{\epsilon}|^{2}} \\
&= \int_{\T} dk ~ \frac{2 \gamma R}{f_{\theta,s}(\epsilon)} \frac{f_{\theta,s}(\epsilon) \lambda (f_{\theta,s}(\epsilon) \lambda + 2 \gamma R) + \epsilon^{2} (\delta_{\epsilon} \omega)^{2}}{|D_{\epsilon}|^{2}} \\
&= \lambda \int_{\T} dk ~ \frac{2 \gamma R(f_{\theta,s}(\epsilon) \lambda + 2 \gamma R)}{|D_{\epsilon}|^{2}} + \int_{\T} dk ~ \frac{2 \gamma R \epsilon^{2} (\delta_{\epsilon} \omega)^{2}}{f_{\theta,s}(\epsilon) |D_{\epsilon}|^{2}},
\end{align*}
where we use the property that $k \to \delta_{\epsilon} \omega(k)$ is a odd function and $k \to R(k)$ is a even function. Since $0 \le \frac{2 \gamma R(f_{\theta,s}(\epsilon) \lambda + 2 \gamma R)}{|D_{\epsilon}|^{2}} \le 1$ and converges to $1$ as $\epsilon \to 0$, we have
\begin{align*}
\lim_{\epsilon \to 0} \bigl|\int_{\T} dk ~ \frac{2 \gamma R(f_{\theta,s}(\epsilon) \lambda + 2 \gamma R)}{|D_{\epsilon}|^{2}} - 1 \bigr| = 0
\end{align*} 
uniformly in $p \in \R$. 

From now on we will show that
\begin{align}
&\lim_{\epsilon \to 0} \int_{\R} dp ~ \bigl| I_{\epsilon}(p,\lambda) - C_{\theta,\gamma_{0}}|p|^{\frac{6}{7 - \theta}} \bigr| J(p) = 0, \label{mainterm2} \\
& \quad I_{\epsilon}(p,\lambda) := \int_{\T} dk ~ \frac{2 \gamma R \epsilon^{2} (\delta_{\epsilon} \omega)^{2}}{f_{\theta,s}(\epsilon) |D_{\epsilon}|^{2}} \notag
\end{align} 
for any $\lambda > 0$. First we change the variable as
\begin{align*}
k &\to \gamma_{0}^{\frac{2}{7 - \theta}} |p|^{- \frac{2}{7 - \theta}} (\frac{f_{\theta,s}(\epsilon)}{\epsilon^{s}})^{-\frac{1}{3}} k, \\
\T &\to \T_{\epsilon} := \{ - \frac{1}{2} \gamma_{0}^{\frac{2}{7 - \theta}} |p|^{- \frac{2}{7 - \theta}} (\frac{f_{\theta,s}(\epsilon)}{\epsilon^{s}})^{-\frac{1}{3}} \le k < \frac{1}{2} \gamma_{0}^{\frac{2}{7 - \theta}} |p|^{- \frac{2}{7 - \theta}} (\frac{f_{\theta,s}(\epsilon)}{\epsilon^{s}})^{-\frac{1}{3}} \}.
\end{align*} 
Define $k_{\epsilon} := \gamma_{0}^{- \frac{2}{7 - \theta}} |p|^{\frac{2}{7 - \theta}} (\frac{f_{\theta,s}(\epsilon)}{\epsilon^{s}})^{\frac{1}{3}} k, ~ k \in \T_{\epsilon}$,
\begin{align*}
R_{\epsilon}(k) &:= \begin{cases} \gamma_{0}^{\frac{4}{7 - \theta}} |p|^{- \frac{4}{7 - \theta}} (\frac{f_{\theta,s}(\epsilon)}{\epsilon^{s}})^{-\frac{2}{3}} R(k_{\epsilon}) \quad &2 < \theta \le 3, \\ \gamma_{0} |p|^{-1}  \epsilon^{-(1-s)} R(k_{\epsilon}) \quad &\theta > 3, \end{cases} \\
(\delta_{\epsilon} \omega)_{\epsilon}(k,p) &:= \begin{cases} \gamma_{0}^{- \frac{3 - \theta}{7 - \theta}} |p|^{- \frac{4}{7 - \theta}} (\frac{f_{\theta,s}(\epsilon)}{\epsilon^{s}})^{\frac{3 - \theta}{6}} \delta_{\epsilon} \omega(k_{\epsilon},p) \quad &2 < \theta < 3, \\ |p|^{-1} \{ - \log (\frac{f_{\theta,s}(\epsilon)}{\epsilon^{s}})^{\frac{1}{3}} \}^{-\frac{1}{2}} \delta_{\epsilon} \omega(k_{\epsilon},p) \quad &\theta = 3, \\ |p|^{-1} \delta_{\epsilon} \omega(k_{\epsilon},p) \quad &\theta > 3. \end{cases}
\end{align*}
Then we have
\begin{align*}
I_{\epsilon}(p,\lambda) = \begin{cases} \gamma_{0}^{-\frac{\theta - 1}{7 - \theta}} |p|^{\frac{6}{7 - \theta}}  \int_{\T_{\epsilon}} dk ~ \frac{2R_{\epsilon} (\delta_{\epsilon} \omega)_{\epsilon}^{2}}{[\gamma_{0}^{- \frac{3 - \theta}{7 - \theta}}(\epsilon^{2s} f_{\theta,s}(\epsilon))^{\frac{1}{3}} \lambda + 2R_{\epsilon}]^{2} + (\delta_{\epsilon} \omega)_{\epsilon}^{2}} \quad &2 < \theta \le 3, \\ \gamma_{0}^{-\frac{1}{2}} |p|^{\frac{3}{2}} \int_{\T_{\epsilon}} dk ~ \frac{2R_{\epsilon} (\delta_{\epsilon} \omega)_{\epsilon}^{2}}{[(\epsilon^{2s} f_{\theta,s}(\epsilon))^{\frac{1}{3}} \lambda + 2R_{\epsilon}]^{2} + (\delta_{\epsilon} \omega)_{\epsilon}^{2}} \quad &\theta > 3. \end{cases}
\end{align*}
From Lemma \ref{expressionR}, we see that $R_{\epsilon}(k)$ converges to $6 \pi^{2} k^{2}$ as $\epsilon \to 0$ for any $k \in \T$. On the other hand, we have
\begin{align}\label{conv:deltaomega}
\lim_{\epsilon \to 0} (\delta_{\epsilon} \omega)_{\epsilon}(p,k) = \begin{cases} \operatorname{sgn}(k) \frac{(\theta - 1)\sqrt{C(\theta)}}{2} |k|^{- \frac{3 - \theta}{2}} \quad &2 < \theta < 3, \\ \operatorname{sgn}(k) \sqrt{C(\theta)} \quad &\theta \ge 3, \end{cases}
\end{align}
for any $(p,k) \neq (0,0)$, see Appendix \ref{app:delome}. Since
\begin{align*}
\frac{R_{\epsilon} (\delta_{\epsilon} \omega)_{\epsilon}^{2}}{[\gamma_{0}^{- \frac{3 - \theta}{7 - \theta}}(\epsilon^{2s} f_{\theta,s}(\epsilon))^{\frac{1}{3}} \lambda + 2R_{\epsilon}]^{2} + (\delta_{\epsilon} \omega)_{\epsilon}^{2}} 1_{\T_{\epsilon}} &\lesssim 1_{ \{ |k| \le 1 \} } R_{\epsilon} + 1_{ \{ \T_{\epsilon} \cap |k| > 1 \} } \frac{(\delta_{\epsilon} \omega)_{\epsilon}^{2}}{R_{\epsilon}} \\
&\lesssim \begin{cases}  1_{ \{ |k| \le 1 \} } |k|^{2} + 1_{ \{ |k| > 1 \} } |k|^{-(5-\theta)} \quad &2 < \theta < 3, \\ 1_{ \{ |k| \le 1 \} } |k|^{2} + 1_{ \{ |k| > 1 \} } \frac{|\log k|}{|k|^{2}} \quad &\theta = 3, \\ 1_{ \{ |k| \le 1 \} } |k|^{2} + 1_{\{ |k| > 1 \} } |k|^{-2} \quad &\theta > 3, \end{cases}
\end{align*}
by the dominated convergence theorem, we have
\begin{align*}
&\lim_{\epsilon \to 0} \int_{\T_{\epsilon}} dk ~ \frac{2 R_{\epsilon} (\delta_{\epsilon} \omega)_{\epsilon}^{2}}{[\gamma_{0}^{- \frac{3 - \theta}{7 - \theta}}(\epsilon^{2s} f_{\theta,s}(\epsilon))^{\frac{1}{3}} \lambda + 2R_{\epsilon}]^{2} + (\delta_{\epsilon} \omega)_{\epsilon}^{2}} \\
& \quad = \int_{\R} dk ~ \frac{12 \pi^{2} (\theta - 1)^{2} C(\theta) |k|^{2}}{576 \pi^{4} |k|^{7 - \theta} + (\theta - 1)^{2} C(\theta) } & &2 < \theta \le 3, \\
&\lim_{\epsilon \to 0} \int_{\T_{\epsilon}} dk ~ \frac{2 R_{\epsilon} (\delta_{\epsilon} \omega)_{\epsilon}^{2}}{[\gamma_{0}^{- \frac{3 - \theta}{7 - \theta}}(\epsilon^{2s} f_{\theta,s}(\epsilon))^{\frac{1}{3}} \lambda + R_{\epsilon}]^{2} + (\delta_{\epsilon} \omega)_{\epsilon}^{2}} \\
& \quad = \int_{\R} dk ~ \frac{12 \pi^{2} C(\theta) |k|^{2}}{144 \pi^{4} |k|^{4} + C(\theta) } & &\theta > 3. 
\end{align*}
By using the calculus of residua, we get
\begin{align*}
\int_{\R} dk ~ \frac{|k|^{2}}{|k|^{4} + 1} \frac{1}{|k|^{\tau}} = \frac{\pi \csc{(\frac{\pi \tau}{4} + \frac{\pi}{4}})}{2} \quad 0 \le \tau < 1 
\end{align*}
and thus we obtain
\begin{align*}
&\int_{\R} dk ~ \frac{12 \pi^{2} (\theta - 1)^{2} C(\theta) |k|^{2}}{576 \pi^{4} |k|^{7 - \theta} + (\theta - 1)^{2} C(\theta) } \\ 
& \quad = \frac{48 \pi^{2}}{7-\theta} (\frac{(\theta - 1)\sqrt{C(\theta)}}{24 \pi^{2}})^{\frac{6}{7-\theta}} \int_{\R} dk ~ \frac{|k|^{2}}{|k|^{4} + 1} \frac{1}{|k|^{\frac{3(3 - \theta)}{7 - \theta}}} \\
& \quad = \frac{24 \pi^{3} \csc{(\frac{3 \pi(3 - \theta) }{4(7 - \theta)} + \frac{\pi}{4}})}{7-\theta} (\frac{(\theta - 1)}{24 \pi^{2}})^{\frac{6}{7-\theta}} C(\theta)^{\frac{3}{7-\theta}} & &2 < \theta \le 3, \\
&\int_{\R} dk ~ \frac{12 \pi^{2} C(\theta) |k|^{2}}{144 \pi^{4} |k|^{4} + C(\theta) } = \frac{\sqrt{3} C(\theta)^{\frac{3}{4}}}{6 \pi} \int_{\R} dk ~ \frac{|k|^{2}}{|k|^{4} + 1} \\
& \quad = \frac{\sqrt{6} C(\theta)^{\frac{3}{4}}}{12} & &\theta > 3.
\end{align*}
From the above arguments and the dominated convergence theorem, we obtain \eqref{mainterm2}.

\subsubsection{Proof of $(\ref{vanish1})$}

From the proof of $(\ref{mainterm})$, we see that $|b_{\epsilon}(p,\lambda) J(p)|$ is bounded by some integrable function on $\R$. By using $(\ref{homow})$, we can verify $(\ref{vanish1})$.

\subsubsection{Proof of $(\ref{vanish2})$}

First we divide $\int_{\T} dk ~ \frac{\gamma R q_{\epsilon}}{f_{\theta,s}(\epsilon) D_{\epsilon}}$ into three parts:
\begin{align*}
&\int_{\T} dk ~ \frac{\gamma R q_{\epsilon}}{f_{\theta,s}(\epsilon) D_{\epsilon}} = \sum_{i = 1}^{3} Q_{\epsilon}^{(i)}, \\
&Q_{\epsilon}^{(1)} := - \sqrt{-1} \int_{\T} dk ~ \frac{ \gamma^{2} R R^{'} \epsilon p \bar{u}_{\epsilon,-}}{f_{\theta,s}(\epsilon) D_{\epsilon}}, \quad Q_{\epsilon}^{(2)} := - \int_{\T} dk ~ \frac{\gamma^{2} R (\mathcal{L} \bar{u}_{\epsilon,+})}{f_{\theta,s}(\epsilon) D_{\epsilon}}, \\
&Q_{\epsilon}^{(3)} := \int_{\T} dk ~ \frac{\gamma^{2} R \epsilon^{2} p^{2} \mathbf{r}_{\epsilon}^{(1)}}{f_{\theta,s}(\epsilon) D_{\epsilon}}.
\end{align*}
From the boundness of $|\frac{\gamma R}{D_{\epsilon}}|$ and \eqref{esti:Laplaceremainder}, we have
\begin{align*}
|Q_{\epsilon}^{(3)}| \lesssim p^{2} \frac{ \epsilon^{2 + s} }{f_{\theta,s}(\epsilon)},
\end{align*}
and thus we obtain
\begin{align*}
\lim_{\epsilon \to 0} \int_{\R} dp ~ | Q_{\epsilon}^{(3)}(p,\lambda) J(p) | = 0.
\end{align*} 

Next we consider the limit of $Q_{\epsilon}^{(1)}(p,\lambda)$. Since $k \to \delta_{\epsilon} \omega(k)$, $k \to R^{'}(k)$ are odd and $k \to R(k)$ is even, we have
\begin{align*}
- \sqrt{-1} \int_{\T} dk ~ \frac{ \gamma^{2} R R^{'} \epsilon p \bar{u}_{\epsilon,-}}{f_{\theta,s}(\epsilon) D_{\epsilon}} &= - \sqrt{-1} \int_{\T} dk ~ \frac{ \gamma^{2} R R^{'} \epsilon p \bar{u}_{\epsilon,-}(f_{\theta,s}(\epsilon) \lambda + 2 \gamma R(k) - \sqrt{-1} \epsilon \delta_{\epsilon} \omega)}{f_{\theta,s}(\epsilon) |D_{\epsilon}|^{2}} \\
&= - \int_{\T} dk ~ \frac{ \gamma^{2}  \epsilon^{2} p R R^{'} \delta_{\epsilon} \omega}{f_{\theta,s}(\epsilon) |D_{\epsilon}|^{2}} \bar{u}_{\epsilon,-}.
\end{align*}
By using Schwartz inequality and Proposition $\ref{wignerestimate}$, we have
\begin{align*}
|Q_{\epsilon}^{(1)}|^{2} &\le \frac{\gamma^{2}  \epsilon^{2} |p|^{2}}{(f_{\theta,s}(\epsilon))^{2}} \Bigl( \int_{\T} dk ~ R |\bar{u}_{\epsilon,-}|^{2} \Bigr) \Bigl( \int_{\T} dk ~ R |\gamma \frac{R^{'} \epsilon \delta_{\epsilon} \omega}{|D_{\epsilon}|^{2}}|^{2} \Bigr) \\
&\le \frac{\gamma^{2}  \epsilon^{2} |p|^{2}}{(f_{\theta,s}(\epsilon))^{2}} \Bigl( \int_{\T} dk ~ R |\bar{u}_{\epsilon,-}|^{2} \Bigr) \Bigl( \int_{\T} dk ~ R |\frac{\gamma  R^{'}}{f_{\theta,s}(\epsilon) \lambda + 2 \gamma R}|^{2} \Bigr) \\
&\lesssim |p|^{2} \frac{\epsilon^{2+s}}{f_{\theta,s}(\epsilon)}.
\end{align*}
Therefore we obtain 
\begin{align*}
\lim_{\epsilon \to 0} \int_{\R} dp ~ | Q_{\epsilon}^{(1)}(p,\lambda) J(p) | = 0.
\end{align*} 

Finally we consider the limit of $Q_{\epsilon}^{(2)}$. Since $\int_{\T} dk ~ (\mathcal{L} f) = 0$ for any $f \in \mathbb{L}^{1}(\T)$, $k \to \delta_{\epsilon} \omega(k)$, $k \to R^{'}(k)$ are odd and $k \to R(k)$, $k \to (\mathcal{L} \bar{u}_{\epsilon,+})(k)$ are even, we have
\begin{align*}
Q_{\epsilon}^{(2)} &= - \int_{\T} dk ~ \frac{\gamma^{2} R (\mathcal{L} \bar{u}_{\epsilon,+})}{f_{\theta,s}(\epsilon) D_{\epsilon}} \\
&= - \int_{\T} dk ~ \frac{\gamma}{2f_{\theta,s}(\epsilon)} (\mathcal{L} \bar{u}_{\epsilon,+}) + \int_{\T} dk ~ \frac{\gamma (f_{\theta,s}(\epsilon) \lambda + \sqrt{-1} \epsilon \delta_{\epsilon} \omega)}{2f_{\theta,s}(\epsilon) D_{\epsilon}} (\mathcal{L} \bar{u}_{\epsilon,+}) \\
&= \int_{\T} dk ~ \frac{\gamma (f_{\theta,s}(\epsilon) \lambda + \sqrt{-1} \epsilon \delta_{\epsilon} \omega)}{2f_{\theta,s}(\epsilon) D_{\epsilon}} (\mathcal{L} \bar{u}_{\epsilon,+}) \\
&= \int_{\T} dk ~ \frac{\gamma (f_{\theta,s}(\epsilon) \lambda + \sqrt{-1} \epsilon \delta_{\epsilon} \omega)(f_{\theta,s}(\epsilon) \lambda + 2 \gamma R - \sqrt{-1} \epsilon \delta_{\epsilon} \omega)}{2f_{\theta,s}(\epsilon) |D_{\epsilon}|^{2}} (\mathcal{L} \bar{u}_{\epsilon,+}) \\
&= \int_{\T} dk ~ \frac{\gamma f_{\theta,s}(\epsilon) \lambda(f_{\theta,s}(\epsilon) \lambda + 2 \gamma R) + \gamma (\epsilon \delta_{\epsilon} \omega)^{2}}{2f_{\theta,s}(\epsilon) |D_{\epsilon}|^{2}} (\mathcal{L} \bar{u}_{\epsilon,+}) \\ &= \sum_{i=1,2,3} Q_{\epsilon}^{(2,i)},
\end{align*}
where 
\begin{align*}
Q_{\epsilon}^{(2,1)} &:= \int_{\T} dk ~ \frac{\gamma \lambda(f_{\theta,s}(\epsilon) \lambda + 2 \gamma R)}{2|D_{\epsilon}|^{2}}(\mathcal{L} \bar{u}_{\epsilon,+}) , \\
Q_{\epsilon}^{(2,2)} &:= \int_{\T} dk ~ \frac{\gamma R (\epsilon \delta_{\epsilon} \omega)^{2}}{2f_{\theta,s}(\epsilon) |D_{\epsilon}|^{2}} \bar{u}_{\epsilon,+}, \quad Q_{\epsilon}^{(2,3)} := \sum_{i=1,2} |u_{\epsilon,i}| \int_{\T} dk ~ \frac{3 \gamma \mathbf{e}_{3-i} (\epsilon \delta_{\epsilon} \omega)^{2}}{8f_{\theta,s}(\epsilon) |D_{\epsilon}|^{2}} 
\end{align*}
and $u_{\epsilon,i}(p,\lambda) := \int_{\T} dk ~ \mathbf{e}_{i}(k) \bar{u}_{\epsilon,+}(p,k,\lambda), i = 1,2$. Since 
\begin{align*}
&|\int_{\T} dk ~ \frac{\gamma \lambda(f_{\theta,s}(\epsilon) \lambda + 2 \gamma R)}{2|D_{\epsilon}|^{2}}(\mathcal{L} \bar{u}_{\epsilon,+})| \\
& \quad \le \sum_{i=1,2} |u_{\epsilon,i}| |\int_{\T} dk ~ \frac{3\gamma \mathbf{e}_{3-i} \lambda(f_{\theta,s}(\epsilon) \lambda + 2 \gamma R)}{8|D_{\epsilon}|^{2}}| + |\int_{\T} dk ~ \frac{\gamma R \lambda(f_{\theta,s}(\epsilon) \lambda + 2 \gamma R)}{2|D_{\epsilon}|^{2}} \bar{u}_{\epsilon,+}| \\
& \quad \le \frac{\lambda}{4} (\sum_{i=1,2} |u_{\epsilon,i}|) + \frac{\lambda}{4} \int_{\T} dk ~ |\bar{u}_{\epsilon,+}|,
\end{align*}
by using $(\ref{homou})$, we have
\begin{align*}
\lim_{\epsilon \to 0} \int_{\R} dp ~ | Q_{\epsilon}^{(2,1)}(p,\lambda) J(p) | = 0.
\end{align*}
Next we estimate $Q_{\epsilon}^{(2,2)}$. Thanks to Schwartz's inequality and Proposition $\ref{wignerestimate}$, we have
\begin{align*}
&\Bigl| \int_{\T} dk ~ \frac{\gamma R (\epsilon \delta_{\epsilon} \omega)^{2}}{2f_{\theta,s}(\epsilon) |D_{\epsilon}|^{2}} \bar{u}_{\epsilon,+} \Bigr|^{2} \\
& \quad \le \frac{\gamma^{2} }{4f_{\theta,s}(\epsilon)^{2}} \Bigl| \int_{\T} dk ~ R |\bar{u}_{\epsilon,+}|^{2} (\frac{\bar{\omega}_{\epsilon}}{\lambda f_{\theta,s}(\epsilon) + \gamma R})^{2} \Bigr| \Bigl| \int_{\T} dk ~ \frac{R (\epsilon \delta_{\epsilon} \omega)^{4} (\lambda f_{\theta,s}(\epsilon) + \gamma R)^{2}}{(\bar{\omega}_{\epsilon})^{2} |D_{\epsilon}|^{4}} \Bigr| \\
& \quad \le \frac{\gamma^{2} }{4f_{\theta,s}(\epsilon)^{2}} \Bigl| \int_{\T} dk ~ R |\bar{u}_{\epsilon,+}|^{2} (\frac{\bar{\omega}_{\epsilon}}{\lambda f_{\theta,s}(\epsilon) + \gamma R})^{2} \Bigr| \Bigl| \int_{\T} dk ~ \frac{R (\epsilon \delta_{\epsilon} \omega)^{2}}{(\bar{\omega}_{\epsilon})^{2}} \Bigr| \\ &\quad \lesssim \frac{\epsilon^{2+s}}{f_{\theta,s}(\epsilon)},
\end{align*}
and therefore we obtain
\begin{align*}
\lim_{\epsilon \to 0} \int_{\R} dp ~ | Q_{\epsilon}^{(2,2)}(p,\lambda) J(p) | = 0.
\end{align*}
Now we consider $Q_{\epsilon}^{(2,3)}$. From Proposition $\ref{wignerestimate}$, we have
\begin{align*}
|u_{\epsilon,i}|^{2} &\le \Bigl| \int_{\T} dk ~ R |\bar{u}_{\epsilon,+}|^{2} (\frac{\bar{\omega}_{\epsilon}}{\lambda f_{\theta,s}(\epsilon) + \gamma R})^{2} \Bigr| \Bigl| \int_{\T} dk ~ (\frac{\lambda f_{\theta,s}(\epsilon) + \gamma R}{\bar{\omega}_{\epsilon}})^{2} \frac{\mathbf{e}_{i}^{2}}{R} \Bigr| \\
&\lesssim \epsilon^{s} f_{\theta,s}(\epsilon), \quad i = 1,2.
\end{align*}
Thus we get
\begin{align*}
\sum_{i=1,2} |u_{\epsilon,i}| \int_{\T} dk ~ \frac{3 \gamma \mathbf{e}_{3-i} (\epsilon \delta_{\epsilon} \omega)^{2}}{8f_{\theta,s}(\epsilon) |D_{\epsilon}|^{2}} &\lesssim ( \epsilon^{s} f_{\theta,s}(\epsilon))^{\frac{1}{2}} \int_{\T} dk ~ \frac{\gamma R (\epsilon \delta_{\epsilon} \omega)^{2}}{f_{\theta,s}(\epsilon) |D_{\epsilon}|^{2}} \\
&\lesssim (\frac{\epsilon^{2+s}}{f_{\theta,s}(\epsilon)})^{\frac{1}{2}} \int_{\T} dk ~ |\delta_{\epsilon} \omega|,
\end{align*}
and 
\begin{align*}
\lim_{\epsilon \to 0} \int_{\R} dp ~ | Q_{\epsilon}^{(2,3)}(p,\lambda) J(p) | = 0.
\end{align*}

\section{proof of Theorem \ref{main1}}

\subsection{Proof of (1),(2)}

\subsubsection{$\star$ - weakly sequentially compactness of $\{ <W_{\epsilon,+}(\cdot), \cdot> \}_{\epsilon}$}

In this subsection we show that $\{ <W_{\epsilon,+}(\cdot), \cdot> \}_{\epsilon}$ is $\star$ - weakly sequentially compact in $C([0,T] ; \mathbb{S}(\R \times \T)^{'})$ for any $T > 0$. Since $\mathbb{S}(\R \times \T)$ is separable, we can take a dence countable subset $\{ J^{(m)} ; m \in \N \} \subset \mathbb{S}(\R \times \T)$. From $(\ref{sequentialcompact})$ and $(\ref{eBoltzmann})$ (appeared in the proof of (3)), we see that $\{ <W_{\epsilon,+}(\cdot), J> \}_{\epsilon}$ is uniformly bounded and equicontinuous for any $J \in \mathbb{S}(\R \times \T)$. Therefore, by using the diagonal argument we can find a subsequence $\{ <W_{\epsilon(n),+}(\cdot), \cdot> \}_{n \in \N}$, $\epsilon(n) \to 0, n \to \infty$ such that $\{ <W_{\epsilon(n),+}(\cdot), J^{(m)}> \}_{n \in \N}$ converges in $C([0,T])$ for all $m \in \N$. Then by using $(\ref{sequentialcompact})$ again, we see that $\{ <W_{\epsilon(n),+}(\cdot), J> \}_{n \in \N}$ converges in $C([0,T])$ for any $J \in \mathbb{S}(\R \times \T)$. 

\subsubsection{Extension to a finite positive measure}\label{subsubsq:extension}

Now we verify that the $\star$ - weak limit $<W(t),\cdot> := \lim_{n \to \infty} <W_{\epsilon(n),+}(\cdot),\cdot>$ can be extended to a finite measure $\mu(t)$ on $\R \times \T$. Note that the following discussion does not depend on the time scaling and the noise scaling for the dynamics.

First we will show that $W(\cdot)$ is multiplicatively positive, that is,
\begin{align*}
<W(t),|J|^{2}> ~ \ge 0 
\end{align*}
for any $t \ge 0$, $J \in \mathbb{S}(\R \times \T)$. Fix a $J \in \mathbb{S}(\R \times \T)$. Since $J$ is smooth,
\begin{align*}
J(\frac{\epsilon}{2}(x+x'),k) - J(\epsilon x,k) &= \frac{\epsilon}{2} \int_{0}^{1} dr ~ (x' - x) \partial_{y} J(\epsilon x + r \frac{\epsilon}{2}(x' - x),k) 
\end{align*}
for any $x,x' \in \Z$. Therefore we have 
\begin{align*}
&\left| \int_{\T} dk e^{2\pi \sqrt{-1} (x'-x) k} \left(J(\frac{\epsilon}{2} (x+x'),k)J(\frac{\epsilon}{2} (x+x'),k)^* - J(\epsilon x,k) J(\frac{\epsilon}{2} (x+x'),k)^{*} \right) \right| \\
&= \left| \frac{\epsilon}{2}(x'-x) \int_{\T} dk e^{2\pi \sqrt{-1} (x'-x) k} J(\frac{\epsilon}{2} (x+x'),k)^{*} \int_{0}^{1} dr \partial_{y} J(\epsilon x + r \frac{\epsilon}{2} (x' - x) , k) \right|.
\end{align*}
By repeating the integration by parts we have
\begin{align*}
& \left|  \int_{\T} dk e^{2\pi \sqrt{-1} (x'-x) k} J(\frac{\epsilon}{2} (x+x'),k)^{*} \int_{0}^{1} dr \partial_{y} J(\epsilon x + r \frac{\epsilon}{2} (x' - x) , k) \right| \\
& = \left | \int_{\T} dk \left(\frac{1}{2 \pi \sqrt{-1} (x' - x)}\right)^{3} e^{2\pi \sqrt{-1} (x'-x) k}  \partial_{k}^{3}[ J(\frac{\epsilon}{2} (x+x'),k)^{*} \int_{0}^{1} dr \partial_{y} J(\epsilon x + r \frac{\epsilon}{2} (x' - x) , k) ] \right| \\
&\le \frac{1}{8\pi^3|x - x'|^{3}}  \int_{\T} dk  \ | \partial_{k}^{3}[ J(\frac{\epsilon}{2} (x+x'),k)^{*} \int_{0}^{1} dr \partial_{y} J(\epsilon x + r \frac{\epsilon}{2} (x' - x) , k) ] |. 
\end{align*}
Hence, we obtain
\begin{align*}
& \left| \int_{\T} dk e^{2\pi \sqrt{-1} (x'-x) k} \left(J(\frac{\epsilon}{2} (x+x'),k)J(\frac{\epsilon}{2} (x+x'),k)^* - J(\epsilon x,k) J(\frac{\epsilon}{2} (x+x'),k)^{*} \right) \right| \\
& \le \frac{\epsilon}{16\pi^3|x - x'|^{2}}  \int_{\T} dk  \ | \partial_{k}^{3}[ J(\frac{\epsilon}{2} (x+x'),k)^{*} \int_{0}^{1} dr \partial_{y} J(\epsilon x + r \frac{\epsilon}{2} (x' - x) , k) ] | \\
& \le \frac{1}{|x - x'|^{2}}O_{J}(\epsilon)
\end{align*}
for all $x \neq x' \in \Z$  where $O_{J}(\epsilon)$ is the remainder term which satisfies $\varlimsup_{\epsilon \to 0} |\frac{O_{J}(\epsilon)}{\epsilon}| \lesssim 1$. In the same way, we can show that
\begin{align*}
\left| \int_{\T} dk e^{2\pi \sqrt{-1} (x'-x) k} \left(J(\epsilon x,k) J(\frac{\epsilon}{2} (x+x'),k)^{*} - J(\epsilon x,k) J(\epsilon x',k)^{*} \right) \right| \le \frac{1}{|x - x'|^{2}}O_{J}(\epsilon).
\end{align*}
On the other hand we have
\begin{align*}
&\frac{\epsilon}{2} \sum_{x,x' \in \Z} <\psi(x',\frac{t}{\epsilon})^{*} \psi(x,\frac{t}{\epsilon})>_{\epsilon} \int_{\T} dk ~ e^{2\pi \sqrt{-1} (x'-x) k} J(\epsilon x,k) J(\epsilon x',k)^{*} \\
&= \frac{\epsilon}{2} \int_{\T} dk ~ <|\sum_{x \in \Z} e^{- 2\pi \sqrt{-1} x k} \psi(x,\frac{t}{\epsilon}) J(\epsilon x,k)|^{2} >_{\epsilon} ~ \ge 0.
\end{align*}
Since
\begin{align*}
& \left | \int_{\T} dk e^{2\pi \sqrt{-1} (x'-x) k} |J(\frac{\epsilon}{2} (x+x'),k)|^{2} - J(\epsilon x,k) J(\epsilon x',k)^{*} \right| \\
&\le \left| \int_{\T} dk e^{2\pi \sqrt{-1} (x'-x) k} \left(J(\frac{\epsilon}{2} (x+x'),k)J(\frac{\epsilon}{2} (x+x'),k)^* - J(\epsilon x,k) J(\frac{\epsilon}{2} (x+x'),k)^{*} \right) \right| \\
& ~ + \left| \int_{\T} dk e^{2\pi \sqrt{-1} (x'-x) k} \left(J(\epsilon x,k) J(\frac{\epsilon}{2} (x+x'),k)^{*} - J(\epsilon x,k) J(\epsilon x',k)^{*} \right) \right|,
\end{align*}
by combining the above calculations we have 
\begin{align*}
<W_{\epsilon,+}(t),|J|^{2}> ~ = \frac{\epsilon}{2} \int_{\T} dk ~ <|\sum_{x \in \Z} e^{- 2\pi \sqrt{-1} x k} \psi_{1}(x,\frac{t}{\epsilon}) J(\epsilon x,k)|^{2} >_{\epsilon} + O_{J}(\epsilon),
\end{align*}
and thus
\begin{align*}
<W(t),|J|^{2}> ~ \ge \liminf_{\epsilon \to 0} <W(t)_{\epsilon,+},|J|^{2}> ~ \ge 0,
\end{align*}
for any $t \ge 0$. Therefore $W(\cdot)$ is multiplicatively positive. 

Next we show that $W(\cdot)$ is positive, that is, 
\begin{align*}
<W(t),J> ~ \ge 0
\end{align*}
for any $t \ge 0$, $J \in \mathbb{S}(\R \times \T) , J \ge 0$. Since $\{ J \in \mathbb{S}(\R \times \T) ; J \in C_{0}^{\infty}(\R \times \T) , J \ge 0 \}$ is a dense subset of $\{ J \in \mathbb{S}(\R \times \T) ; J \ge 0 \} $, it is sufficient to show the positivity on $C_{0}^{\infty}(\R \times \T)$. Fix a positive function $J \in C_{0}^{\infty}(\R \times \T)$. There exists a positive constant $M > 0$ such that the support of $J$ is a subset of $[-M,M] \times \T$. Let $a(y) \in C_{0}^{\infty}(\R), b(k) \in C^{\infty}(\T)$ be functions such that $a(y) = 1 , ~ y \in [-M,M]$. Define $J^{(m)}(y,k) \in C_{0}^{\infty}(\R \times \T) , ~ m \in \N$ as
\begin{align*}
J^{(m)}(y,k) = a(y) \sqrt{J(y,k) + \frac{1}{m} }.
\end{align*}
Then the sequence $\{ |J^{(m)}|^{2} , ~ m \in \N \}$ converges to $b(k)J(y,k)$ in the topology of $C_{0}^{\infty}(\R \times \T)$. Since the embedding of the space $C_{0}^{\infty}(\R \times \T)$ into the space $\mathbb{S}(\R \times \T)$ is continuous, $\{ |J^{(m)}|^{2} , ~ m \in \N \}$ also converges to $J(y,k)$ in the topology of $\mathbb{S}(\R \times \T)$. By the continuity of $W(t)$, we have
\begin{align*}
<W(t),J> = \lim_{m \to \infty} <W(t),|J^{(m)}|^{2}> ~ \ge 0
\end{align*}
for any $t \ge 0$. For  Therefore $W(\cdot)$ is positive. 

By the usual method, for example see Lemma 1 in Chapter 2 of \cite{GV}, we can extend the domain of $W(\cdot)$ to the space $C_{0}(\R \times \T)$. By the Riesz representation theorem there exists a family of positive measures $\{ \mu(t) ; t \ge 0 \}$ such that
\begin{align*}
<W(t),J> ~ = \int_{\R \times \T} \mu(t)(dy,dk) ~ J(y,k) ,
\end{align*}
for all $t \ge 0$ and $J \in C_{0}(\R \times \T)$. 

Finally we show that $\{ \mu(t) ; t \ge 0 \}$ are finite measures. Fix a family of non-negative functions $J^{(l)}(y,k) = J^{(l)}(y) \in \mathbb{S}, ~ l \in \N $ which satisfies $ J^{(l)}(y) \nearrow 1, ~ l \to \infty$ for any $y \in \R$. From \eqref{initialbound}, \eqref{energydisonR} and the monotone convergence theorem, we have
\begin{align*}
\sup_{t \ge 0} \mu(t)(\R \times \T) &= \sup_{t \ge 0} \lim_{l \to \infty} \int_{\R \times \T} \mu(t)(dy,dk) ~ J^{(l)}(y,k) \\
&= \sup_{t \ge 0} \lim_{l \to \infty} \lim_{\epsilon \to 0} <W_{\epsilon,+}(t),J^{(l)}> \\
&= \sup_{t \ge 0} \lim_{l \to \infty} \lim_{\epsilon \to 0} \frac{\epsilon}{2} \sum_{x \in \Z} \mathbb{E}_{\epsilon}[|\psi_{x}(\frac{t}{f_{\theta,s}(\epsilon)})|^{2}] J^{(l)}(\epsilon x) \\
&\le \frac{\sqrt{K_{1}}}{2}.
\end{align*}

\subsection{Proof of (3)}

\subsubsection{Derivation of the Boltzmann equation}

In this section we will show that
\begin{align}\label{eBoltzmann}
\partial_{t} <W_{\epsilon,+}(t),J> &= \frac{1}{2 \pi} <W_{\epsilon,+}(t), \omega^{'}(\partial_{y}J)> + \gamma_{0} <W_{\epsilon,+}(t),\mathcal{L}J> + o_{J}(1)
\end{align}
for $J \in \mathbb{S}(\R \times \T)$ where $o_{J}(1)$ is the remainder term which satisfies
\begin{align*}
\sup_{0< \epsilon < 1} |o_{J}(1)| \lesssim 1, \quad \lim_{\epsilon \to 0} |o_{J}(1)| = 0.
\end{align*}
Notice that $<W_{\epsilon,+}(t), \omega^{'}(\partial_{y}J)>$ is well-defined for any $J \in \mathbb{S}(\R \times \T)$ because from the Schwartz inequality and \eqref{forlaplace} with $p=0$ we have
\begin{align*}
&\bigl| \frac{1}{2 \pi} <W_{\epsilon,+}(t), \omega^{'}(\partial_{y}J)> \bigr|^{2} = \bigl| \int_{\R \times \T} dpdk ~ \sqrt{-1} p \omega^{'}(k) \widetilde{W}_{\epsilon,+}(p,k,t) \widetilde{J}(p,k)^{*} \bigr|^{2} \\ 
&\lesssim \bigl( \int_{\R \times \T} dpdk ~ |p \omega^{'}(k)|^{2} |\widetilde{J}(p,k)| \bigr) \bigl( \int_{\R \times \T} dpdk ~ |\widetilde{W}_{\epsilon,+}(p,k,t)|^{2} |\widetilde{J}(p,k)| \bigr) \\
&\lesssim \bigl( \int_{\R \times \T} dpdk ~ |p \omega^{'}(k)|^{2} |\widetilde{J}(p,k)| \bigr) \bigl( \int_{\R} dp ~ \sup_{k \in \T} |\widetilde{J}(p,k)| \int_{\T} dk ~ |\widetilde{W}_{\epsilon,+}(0,k,t)|^{2} \bigr) \\
&\lesssim \int_{\R \times \T} dpdk ~ |p \omega^{'}(k)|^{2} |\widetilde{J}(p,k)| < \infty. 
\end{align*}
To derive $(\ref{eBoltzmann})$, we need the following lemma.
\begin{lemma}\label{replacelemma}
\begin{align}\label{replace}
&\bigl| \int_{\R \times \T} dpdk ~ \frac{\sqrt{-1}}{\epsilon} (\delta_{\epsilon} \omega)(p,k) \widetilde{W}_{\epsilon,+}(p,k,t) \widetilde{J}(p,k)^{*} \notag \\
& \quad - \int_{\R \times \T} dpdk ~ \sqrt{-1} p \omega^{'}(k) \widetilde{W}_{\epsilon,+}(p,k,t) \widetilde{J}(p,k)^{*} \bigr| = o_{J}(1).
\end{align}
\end{lemma}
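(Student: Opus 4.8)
The plan is to bound the difference in \eqref{replace} by one application of Cauchy--Schwarz, separating an ``oscillation of the dispersion relation'' factor (which should vanish) from an $\mathbb{L}^{2}$-bound on the Wigner distribution (which stays bounded). Writing the left-hand side of \eqref{replace} as $\bigl|\int_{\R\times\T}dpdk~(\delta_{\epsilon}\omega(p,k)-p\,\omega'(k))\widetilde{W}_{\epsilon,+}(p,k,t)\widetilde{J}(p,k)^{*}\bigr|$, Cauchy--Schwarz with the weight $|\widetilde{J}|$ bounds it by
\[
\Bigl(\int_{\R\times\T}dpdk~|\delta_{\epsilon}\omega(p,k)-p\,\omega'(k)|^{2}|\widetilde{J}(p,k)|\Bigr)^{1/2}\Bigl(\int_{\R\times\T}dpdk~|\widetilde{W}_{\epsilon,+}(p,k,t)|^{2}|\widetilde{J}(p,k)|\Bigr)^{1/2}.
\]
For the second factor I would apply Cauchy--Schwarz inside the expectation together with the energy conservation law \eqref{eq:energyconservation} to get $|\widetilde{W}_{\epsilon,+}(p,k,t)|\le\frac{\epsilon}{2}(E_{\mu_{\epsilon}}[|\widehat{\psi}(k-\tfrac{\epsilon p}{2})|^{2}])^{1/2}(E_{\mu_{\epsilon}}[|\widehat{\psi}(k+\tfrac{\epsilon p}{2})|^{2}])^{1/2}$, then Cauchy--Schwarz in $k$ and \eqref{initialbound2} to obtain $\int_{\T}dk~|\widetilde{W}_{\epsilon,+}(p,k,t)|^{2}\le K_{1}/4$ uniformly in $p,\epsilon,t$; hence the second factor is at most $(\tfrac{K_{1}}{4}\|J\|)^{1/2}$, and everything reduces to showing the first factor tends to $0$.

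The crucial tool for the first factor is the integral representation $\delta_{\epsilon}\omega(p,k)=p\int_{-1/2}^{1/2}dv~\omega'(k+\epsilon pv)$, legitimate because $\theta>2$ forces $\omega'\in\mathbb{L}^{2}(\T)$ (recall $\omega'(k)\sim|k|^{-(3-\theta)/2}$ as $k\to0$ for $2<\theta<3$ and $\omega'(k)\sim\sqrt{-\log|k|}$ for $\theta=3$). I would then split $\R\times\T$ into three regions. On $\{|p|>P\}$, bound $|\delta_{\epsilon}\omega-p\omega'|^{2}\lesssim|p|^{2}\int_{-1/2}^{1/2}|\omega'(k+\epsilon pv)|^{2}dv+|p|^{2}|\omega'(k)|^{2}$ and integrate in $k$ using translation invariance of $dk$ on $\T$; the contribution is $\lesssim\|\omega'\|_{\mathbb{L}^{2}(\T)}^{2}\int_{|p|>P}|p|^{2}\sup_{k}|\widetilde{J}(p,k)|\,dp$, which is small for $P$ large uniformly in $\epsilon$, by the rapid decay of $\widetilde{J}$ in $p$. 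On $\{|p|\le P,~|k|\le\delta\}$, the integral representation and Fubini give, for $\epsilon\le\delta/P$, $\int_{|k|\le\delta}|\delta_{\epsilon}\omega(p,k)|^{2}dk\le|p|^{2}\int_{|k|\le2\delta}|\omega'(k)|^{2}dk$ and likewise for the $p\omega'$ term, so this region contributes $\lesssim\bigl(\int_{|k|\le2\delta}|\omega'|^{2}dk\bigr)\int_{|p|\le P}|p|^{2}\sup_{k}|\widetilde{J}(p,k)|\,dp$, small for $\delta$ small uniformly in $\epsilon\le\delta/P$. On $\{|p|\le P,~|k|>\delta\}$, for $\epsilon<\delta/(2P)$ the shifted argument $k+\epsilon pv$ stays in $\{|k|\ge\delta/2\}$, where $\omega'$ is uniformly continuous, so $|\delta_{\epsilon}\omega(p,k)-p\omega'(k)|\le|p|\sup_{|k|\ge\delta/2,|v|\le1/2}|\omega'(k+\epsilon pv)-\omega'(k)|\to0$ as $\epsilon\to0$. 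Choosing first $P$ large, then $\delta$ small, then letting $\epsilon\to0$ gives $\lim_{\epsilon\to0}\int_{\R\times\T}dpdk~|\delta_{\epsilon}\omega-p\omega'|^{2}|\widetilde{J}|=0$; since none of these bounds involves $t$, the resulting $o_{J}(1)$ has the two required properties.

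The main obstacle is the singularity of $\omega'$ at $k=0$ in the superdiffusive range $2<\theta\le3$: one cannot dominate $\delta_{\epsilon}\omega(p,k)$ near the origin by $\sup_{|\xi-k|\le\epsilon|p|/2}|\omega'(\xi)|$, since that supremum is infinite once the interval reaches $0$. The passage to $\delta_{\epsilon}\omega(p,k)=p\int_{-1/2}^{1/2}\omega'(k+\epsilon pv)dv$ followed by Fubini is precisely what transfers the (integrable) singularity onto a fixed neighbourhood $\{|k|\le2\delta\}$ whose $|\omega'|^{2}$-mass vanishes as $\delta\to0$, uniformly in $\epsilon$. A secondary nuisance is that the shift $\epsilon pv$ is not uniformly small over all $p\in\R$; this is handled by using the rapid decay of $\widetilde{J}$ in $p$ to truncate the $p$-integral with an $\epsilon$-uniform tail estimate before running the uniform-continuity argument on the bounded-$p$ part.
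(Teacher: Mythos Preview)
Your proof is correct. The overall architecture---Cauchy--Schwarz with weight $|\widetilde J|$, then a uniform $\mathbb L^2(\T)$ bound on $\widetilde W_{\epsilon,+}(p,\cdot,t)$, then showing the first factor vanishes---matches the paper exactly, and your control of the second factor via Cauchy--Schwarz inside the expectation, energy conservation, and \eqref{initialbound2} is equivalent to what the paper does (the paper phrases it as the $p=0$ case of Proposition~\ref{forlaplace}, but that amounts to the same inequality $\int_\T|\widetilde W_{\epsilon,+}(p,k,t)|^2\,dk\le K_1/4$).

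Where you diverge is in the treatment of the first factor. The paper simply asserts the pointwise domination
\[
|\delta_\epsilon\omega(p,k)-p\,\omega'(k)|^2\ \lesssim\ |p|^2\,|k|^{-(3-\theta)}\quad(2<\theta<3),
\]
with analogous bounds for $\theta\ge3$, and then invokes dominated convergence directly. You instead run a three-region argument based only on $\omega'\in\mathbb L^2(\T)$, the integral representation $\delta_\epsilon\omega=p\int_{-1/2}^{1/2}\omega'(k+\epsilon pv)\,dv$, Fubini, and translation invariance of $dk$ on $\T$. Your route is longer but arguably more transparent: it never needs the pointwise estimate near $k=0$ (which the paper states without proof and which does require a small case analysis to verify), and it makes explicit why the rapid decay of $\widetilde J$ in $p$ is essential. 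The paper's route is shorter once one accepts the dominating function. Both yield the required $o_J(1)$ uniformly in $t$.
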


\begin{proof}

Thanks to the Schwartz inequality and \eqref{forlaplace} with $p=0$, we have
\begin{align*}
&\bigl| \int_{\R \times \T} dpdk ~ \sqrt{-1} \{ (\delta_{\epsilon} \omega)(p,k) - p \omega^{'}(k) \} \widetilde{W}_{\epsilon,+}(p,k,t) \widetilde{J}(p,k)^{*} \bigr|^{2} \\
&\lesssim \bigl( \int_{\R \times \T} dpdk ~ |(\delta_{\epsilon} \omega)(p,k) - p \omega^{'}(k)|^{2} |\widetilde{J}(p,k)| \bigr) \bigl( \int_{\R \times \T} dpdk ~ |\widetilde{W}_{\epsilon,+}(p,k,t)|^{2} |\widetilde{J}(p,k)| \bigr) \\
&\lesssim \bigl( \int_{\R \times \T} dpdk ~ |(\delta_{\epsilon} \omega)(p,k) - p \omega^{'}(k)|^{2} |\widetilde{J}(p,k)| \bigr) \bigl( \int_{\R} dp ~ \sup_{k \in \T} |\widetilde{J}(p,k)| \int_{\T} dk ~ |\widetilde{W}_{\epsilon,+}(0,k,t)|^{2} \bigr) \\
&\lesssim \int_{\R \times \T} dpdk ~ |(\delta_{\epsilon} \omega)(p,k) - p \omega^{'}(k)|^{2} |\widetilde{J}(p,k)|.
\end{align*}
Since  
\begin{align*}
|(\delta_{\epsilon} \omega)(p,k) - p \omega^{'}(k)|^{2} \lesssim |(\delta_{\epsilon} \omega)(p,k)|^{2} + p^{2} |\omega^{'}(k)|^{2} \lesssim \begin{cases} |p|^{2} |k|^{-(3 - \theta)} \quad &2 < \theta < 3, \\ |p|^{2} |\log |k||^{2} \quad &\theta = 3, \\ |p|^{2} \quad &\theta > 3, \end{cases}
\end{align*}
and $(\delta_{\epsilon} \omega)(p,k)$ converges to $p \omega^{'}(k)$, by the dominated convergence theorem we have
\begin{align*}
\int_{\R \times \T} dpdk ~ |(\delta_{\epsilon} \omega)(p,k) - p \omega^{'}(k)|^{2} |\widetilde{J}(p,k)| = o_{J}(1).
\end{align*}
\end{proof}

From Lemma $\ref{replacelemma}$, $(\ref{startpoint2})$ and $(\ref{startpoint2e})$ we have
\begin{align}
\partial_{t} <W_{\epsilon,+},J> &= \frac{1}{2 \pi} <W_{\epsilon,+},\omega^{'} (\partial_{y}J)> + \gamma_{0} <W_{\epsilon,+},\mathcal{L}J> \notag \\
& \quad  - \frac{\gamma_{0}}{2}<Y_{\epsilon,+} + Y_{\epsilon,-},\mathcal{L}J> + o_{J}(1), \label{wigdisevo} \\
\partial_{t} <Y_{\epsilon,+},J> &= -\frac{2 \sqrt{-1}}{\epsilon} <Y_{\epsilon,+},\bar{\omega}_{\epsilon}J> + \gamma_{0} <Y_{\epsilon,+},\mathcal{L}J> \notag \\
& \quad  - \frac{\gamma_{0}}{2}<W_{\epsilon,+} + W_{\epsilon,-},\mathcal{L}J> - \gamma_{0} <Y_{\epsilon,+} - Y_{\epsilon,-},\mathcal{R}_{0}J> + o_{J}(1) \label{awigdisevo}.
\end{align}
From $(\ref{awigdisevo})$, for any $T > 0$ we obtain
\begin{align*}
\lim_{\epsilon \to 0} |\int_{0}^{T} dt ~ <Y_{\epsilon,\iota},\omega J>| = 0 , \quad \iota = +,-.
\end{align*}
If $J \in \mathbb{S}(\R \times \T)$, then $\frac{\mathcal{L}J}{\omega}, \frac{\mathcal{R}_{0}J}{\omega} \in \mathbb{S}(\R \times \T)$. Therefore for any $J \in \mathbb{S}(\R \times \T)$ we have
\begin{align}\label{disappaWigner}
\lim_{\epsilon \to 0} \bigl| \int_{0}^{T} dt ~ <Y_{\epsilon,\iota},\mathcal{L}J> \bigr| + \bigl| \int_{0}^{T} dt ~ <Y_{\epsilon,\iota},\mathcal{R}_{0}J> \bigr| = 0 , \quad \iota = +,-.
\end{align} 
From $(\ref{wigdisevo})$ and $(\ref{disappaWigner})$, we can derive $(\ref{eBoltzmann})$.

Finally we show that a limit of a convergent subsequence $\mu(t)$ satisfies $\mu(t)(dy,\{ 0 \}) = 0$. From $(\ref{forlaplace})$ with $p = 0$ and Schwartz inequality, we have
\begin{align*}
|<W_{\epsilon,+}(t), J(y)f^{\lambda,0,r}(k)>| &\lesssim \| J \| \bigl( \int_{\T} dk ~ |f^{\lambda,0,r}(k)|^{2} \bigr)^{\frac{1}{2}} \lesssim \| J \| r,
\end{align*}
for any $J \in \mathbb{S}(\R)$, $0 < r < \frac{1}{2}$. By taking the limit $\epsilon \to 0$ and then $\lambda \to 0$, we have
\begin{align*}
\bigl| \int_{\R} \mu(t)(dy,[-r,r]) J(y) \bigr| \lesssim \| J \| r \to 0, \quad r \to 0. 
\end{align*}
Therefore we obtain $\mu(t)(dy,\{ 0 \}) = 0$.

\subsubsection{Uniqueness of the solution of $(\ref{boltzmann})$}

In this subsection we prove the uniqueness of the initial value problem of $(\ref{boltzmann})$. First we reduce the problem to the space-homogeneous case:

Suppose that a family of finite positive measures $\{ \mu(t) ; t \ge 0 \}$ is a solution of the Boltzmann equation $(\ref{boltzmann})$. Then 
\begin{align*}
\tilde{\mu}(t)(dy,dk) := \begin{cases} \mu(t)(dy + \frac{1}{2\pi} \omega'(k)t,dk) \quad &\text{on} \quad \R \times \T_{0}, \\ 0 \quad &\text{on} \quad \R \times \{ 0 \}, \end{cases}
\end{align*}
is a solution of the following space-homogeneous Boltzmann equation
\begin{align*}
\partial_{t} \int_{\R \times \T} d\tilde{\mu}(t) J &= \int_{\R \times \T} d\tilde{\mu}(t) \mathcal{L}J \\
\int_{\R \times \T} d\tilde{\mu}(0) J &= \int_{\R \times \T} d\tilde{\mu}_{0} J \quad \quad \tilde{\mu}(t)(dy,\{ 0 \}) = 0, 
\end{align*}
where 
\begin{align*}
\int d\tilde{\mu}(t) J &= \int \mu(t)(dy + \frac{1}{2\pi} \omega'(k)t,dk) J(y,k) \\ 
&:= \int \mu(t)(dy,dk) J(y - \frac{1}{2\pi} \omega'(k)t,k). 
\end{align*}
Conversely, if $\tilde{\mu}(t)$ is a solution of the space-homogeneous Boltzmann equation, then 
\begin{align*}
\mu(t)(dy,dk) := \begin{cases} \tilde{\mu}(t)(dy - \frac{1}{2\pi} \omega'(k)t,dk) \quad &\text{on} \quad \R \times \T_{0} \\ 0 \quad &\text{on} \quad \R \times \{ 0 \}, \end{cases}
\end{align*}
is a solution of \eqref{boltzmann}. Therefore it is sufficient to show the uniqueness of the solution for the space-homogeneous Boltzmann equation. 

Suppose that $J(y,k) = f^{\lambda,y^{*},r}(y)G(k)$, where 
\begin{align*}
f^{\lambda,y^{*},r}(y) &=\exp \left( - \frac{\lambda}{r^{2} - |y -y^{*}|^{2}} \right) 1_{B(y^{*},r)}(y), \\
B(y^{*},r) &= \{ y \in \R \ ; \ |y - y^{*}| < r \},  
\end{align*}
$y^{*} \in \R$ , $r > 0$ and $G(\cdot) \in C_{0}^{\infty}(\T)$. Note that $f^{\lambda,y^{*},r} \in C^{\infty}_{0}(\R)$, $\|f^{\lambda,y^{*},r} \|_{\infty} \le 1$ and 
\begin{align*}
\lim_{\lambda \to 0} f^{\lambda,y^{*},r}(y) = 1_{B(y^{*},r)}(y).
\end{align*}
Let $\mu(t), \nu(t)$ be solutions of the space-homogeneous Boltzmann equation with a same initial condition. Then 
\begin{align*}
&\bigl|\int d\mu(t) J - \int d\nu(t) J \bigr| \\
& = \bigl|\int d\mu(t) f^{\lambda,y^{*},r}(y) G(k) - \int d\nu(t) f^{\lambda,y^{*},r}(y) G(k) \bigr|\\
&\le \int_{0}^{t} ds \bigl| \int d(\mu(s) - \nu(s)) \cdot (f^{\lambda,y^{*},r}(y) (\mathcal{L}G)(k)) \bigr|.
\end{align*}
By taking the limit $\lambda \to 0$, we have
\begin{align*}
&\bigl| \int_{\T} G(k) (\mu(t)(B(y^{*},r),dk) - \nu(t)(B(y^{*},r),dk)) \bigr| \\
&\le \int_{0}^{t} ds \bigl| \int_{\T} (\mu(s)(B(y^{*},r),dk) - \nu(s)(B(y^{*},r),dk)) (\mathcal{L}G) \bigr| \\
&\lesssim \int_{0}^{t} ds \| \mu(s)(B(y^{*},r),dk) - \nu(s)(B(y^{*},r),dk) \| \\
\end{align*}
where $\| \cdot \|$ is the total variation for a bounded signed measure on $\T$. Hence,
\begin{align*}
& \| \mu(t)(B(y^{*},r),dk) - \nu(t)(B(y^{*},r),dk) \| \\
& \quad \quad \lesssim \int_{0}^{t} ds \| \mu(s)(B(y^{*},r),dk) - \nu(s)(B(y^{*},r),dk) \|.
\end{align*}
Therefore $\mu(t)(B(y^{*},r),dk) = \nu(t)(B(y^{*},r),dk)$ on $\T_{0}$ for any ball $B(y^{*},r) \subset \R$, which concludes $\mu(t)=\nu(t)$ for any $t \ge 0$.

\section{proof of theorem \ref{main2}}

\subsection{proof of (1)}

It is sufficient to show that in our case, \cite[Condition 2.1, 2.2, 2.3, and $(2.12)$]{KJO} are satisfied. Since our scattering operator is same with that of \cite{KJO}, \cite[Condition 2.2 and 2.3]{KJO} are satisfied. In addition, $\int_{\T}  P(\cdot,dk^{'}) \frac{\omega^{'}(k^{'})}{R(k^{'})} \in \mathbb{L}^{2}(\pi)$, \cite[$(2.12)$]{KJO} is also satisfied. 

From now on we will verify that \cite[Condition 2.1]{KJO} is satisfied. Actually, instead of \cite[Condition 2.1]{KJO}, it is sufficient to show that
\begin{align*}
\lim_{N \to \infty} N \pi(\frac{\omega^{'}(k)}{2 \gamma_{0} R(k)} > N(\theta) \lambda) = \begin{cases} C_{*}(\theta) \gamma_{0}^{-\frac{6}{7-\theta}} \lambda^{- \frac{6}{7 - \theta}} \quad &2 < \theta < 3, \\ C_{*}(\theta) \gamma_{0}^{-\frac{3}{2}} \lambda^{- \frac{3}{2}} \quad &\theta \ge 3, \end{cases}
\end{align*}
for $\lambda > 0$ where $C_{*}(\theta)$ is some positive constant. About the sufficiency, see the proof of \cite[Lemma 5.5]{KJO}. By using a change of variable $k \in \T \to N^{\frac{1}{3}} k \in N^{\frac{1}{3}} \T$, we have
\begin{align*}
N \pi(\frac{\omega^{'}(k)}{2 \gamma_{0} R(k)} > N(\theta) \lambda) &= \frac{2N}{3} \int_{\{ \frac{\omega^{'}(k)}{2 \gamma_{0} R(k)} > N(\theta) \lambda \}} dk ~ R(k) \\
&= \frac{2}{3} \int_{\{ \frac{\omega^{'}(N^{- \frac{1}{3}} k)}{2 \gamma_{0} R(N^{-\frac{1}{3}} k)} > N(\theta) \lambda \}} dk ~ N^{\frac{2}{3}} R(N^{-\frac{1}{3}} k) \\
&\to \begin{cases} 4\pi^{2} \int_{\{ \frac{(\theta - 1)\sqrt{C(\theta)}}{24 \gamma_{0} \pi^{2}} |k|^{- \frac{7-\theta}{2}} > \lambda, k > 0 \}} dk ~ |k|^{2} \quad &2 < \theta \le 3, \\ 4 \pi^{2} \int_{\{ \frac{\sqrt{C(\theta)}}{12 \gamma_{0} \pi^{2}} |k|^{- 2 } > \lambda, k > 0 \}} dk ~ |k|^{2} \quad &\theta > 3,\end{cases} \\
&= \begin{cases} C_{*}(\theta) \gamma_{0}^{-\frac{6}{7-\theta}} |\lambda|^{-\frac{6}{7-\theta}} \quad &2 < \theta \le 3, \\ C_{*}(\theta) \gamma_{0}^{-\frac{3}{2} } |\lambda|^{-\frac{3}{2} } \quad &\theta > 3, \end{cases} 
\end{align*}
where $C_{*}(\theta) := \begin{cases} \frac{4 \pi^{2}}{3} (\frac{(\theta - 1)\sqrt{C(\theta)}}{24 \pi^{2}})^{\frac{6}{7-\theta}} \quad &2 < \theta \le 3, \\ \frac{4 \pi^{2}}{3} (\frac{\sqrt{C(\theta)}}{12 \pi^{2}})^{\frac{3}{2}} \quad &\theta > 3. \end{cases}$ 
Therefore, by \cite[Theorem 2.8, Theorem 6.1]{KJO}, the finite-dimensional distributions of scaled process $\{ \frac{1}{N(\theta)} Z(Nt) ; t \ge 0 \}$ converge weakly to those of a L\'{e}vy process whose characteristic function at time 1, denoted by $\phi(y)$, is given by
\begin{align*}
\phi(y) := \begin{cases} \exp \bigl( \frac{9}{7 - \theta} \int_{\R} d\lambda ~ ( e^{\sqrt{-1}\lambda y} - 1 - \sqrt{-1} \lambda y ) C_{*}(\theta) \gamma_{0}^{-\frac{\theta - 1}{7-\theta}} \Gamma(\frac{6}{7-\theta} + 1) |\lambda|^{-\frac{6}{7-\theta} - 1} \bigr) &2 < \theta \le 3, \\ \exp \bigl( \frac{9}{4} \int_{\R} d\lambda ~ ( e^{\sqrt{-1}\lambda y} - 1 - \sqrt{-1} \lambda y ) C_{*}(\theta) \gamma_{0}^{-\frac{1}{2} } \Gamma(\frac{3}{2} + 1) |\lambda|^{-\frac{3}{2} - 1} \bigr) &\theta > 3 \end{cases}
\end{align*}
where $\Gamma(a), a > -1$ is the gamma function. In addition, the generator of the L\'{e}vy process is $- \frac{c_{\theta,\gamma_{0}}}{(2\pi)^{\frac{6}{7-\theta}}} (-\Delta)^{\frac{3}{7-\theta}}$ if $2 < \theta \le 3$, and $- \frac{c_{\theta,\gamma_{0}}}{(2\pi)^{\frac{3}{2}}} (-\Delta)^{\frac{3}{4}}$ if $\theta > 3$, where $c_{\theta,\gamma_{0}}$ is given by
\begin{align*}
c_{\theta,\gamma_{0}} &:= \begin{cases} \frac{24 \pi^{2}}{7-\theta} \gamma_{0}^{- \frac{\theta - 1}{7 - \theta}} (\frac{(\theta - 1)\sqrt{C(\theta)}}{24 \pi^{2}})^{\frac{6}{7-\theta}} \Gamma(\frac{6}{7-\theta} + 1) \int_{\R} dy ~ (1-\cos y) |y|^{-\frac{6}{7-\theta} - 1} \quad &2 < \theta \le 3, \\
 \frac{\sqrt{3}}{12 \pi} \gamma_{0}^{- \frac{1}{2}} C(\theta)^{\frac{3}{4}} \Gamma(\frac{3}{2} + 1) \int_{\R} dy ~ (1-\cos y) |y|^{-\frac{3}{2} - 1} \quad &\theta > 3. \end{cases} 
\end{align*}

Finally, we show $c_{\theta,\gamma_{0}} = C_{\theta,\gamma_{0}}$, where $C_{\theta,\gamma_{0}}$ is given by (\ref{diffcoeffi}). In Appendix \ref{ap:toGamma} we show
\begin{align}
\begin{cases} \int_{\R} dy ~ (1-\cos y) |y|^{-\frac{6}{7-\theta} - 1} = \frac{7 - \theta}{3} \cos{\frac{3 \pi}{7-\theta}} \Gamma(1-\frac{6}{7 - \theta}) &2 < \theta \le 3, \\
\int_{\R} dy ~ (1-\cos y) |y|^{-\frac{3}{2} - 1} = \frac{4}{3} \cos{\frac{3 \pi}{4}} \Gamma(1-\frac{3}{2}) &\theta > 3. \end{cases} \label{toGamma}
\end{align}
From $(\ref{toGamma})$ and Euler's reflection formula $\Gamma(1+y) \Gamma(1-y) = \pi y \csc{\pi y}, ~ y \in \R$, we have
\begin{align*}
&\Gamma(\frac{6}{7-\theta} + 1) \int_{\R} dy ~ (1-\cos y) |y|^{-\frac{6}{7-\theta} - 1} = \pi \csc{\frac{3 \pi}{7 - \theta}} & 2 < \theta \le 3,\\
&\Gamma(\frac{3}{2} + 1) \int_{\R} dy ~ (1-\cos y) |y|^{-\frac{3}{2} - 1} = \sqrt{2} \pi & \theta > 3.
\end{align*}
Hence we obtain $c_{\theta,\gamma_{0}} = C_{\theta,\gamma_{0}}$.

\subsection{proof of $(2)$}

From Theorem $\ref{main2} ~ (1)$, it suffices to show that 
\begin{align}\label{homogenizeonT}
\lim_{N \to \infty} \int_{\T} dk ~ \bigl| u_N(N(\theta) y,k,Nt) - \mathbb{E}_{k} [\int_{\T} dk^{'} u_{0}(y + \frac{1}{N(\theta)} Z(Nt),k^{'})] \bigr|^{2} = 0. 
\end{align}
By using the Fourier transform, we obtain another representation of $u_{N}(y,k,t)$. 
\begin{align*}
u_N(N(\theta) y,k,Nt) &= \mathbb{E}_{k}[u_{0}(y + \frac{1}{N(\theta)} Z(Nt),K(Nt))] \\
&= \sum_{x \in \Z} \int_{\R} dp ~ \widetilde{u_{0}} (p,x) \mathbb{E}_{k}[e^{2 \pi  \sqrt{-1} p Z_{N}(Nt)} e^{2 \pi \sqrt{-1} x K(Nt)} ] ,
\end{align*}
where $Z_{N}(Nt) := y + \frac{1}{N(\theta)} Z(Nt)$. Hence we have
\begin{align*}
&\bigl| u_N(N(\theta) y,k,Nt) - \mathbb{E}_{k} [\int_{\T} dk^{'} u_{0}(y + \frac{1}{N(\theta)} Z(Nt),k^{'})] \bigr|^{2} \notag \\
&\lesssim \sum_{x \in \Z} \int_{\R} dp ~ \bigl| \widetilde{u_{0}} (p,x) \bigr| \bigl| \mathbb{E}_{k}[e^{\sqrt{-1} p Z_{N}(Nt)} (e^{2 \pi \sqrt{-1} x K(Nt)} - \int_{\T} dk^{'} e^{2 \pi \sqrt{-1} x k^{'}} )] \bigr|^{2}.
\end{align*}
Thanks to the assumption $u_{0} \in C^{\infty}_{0}(\R \times \T)$ and the dominated convergence theorem, if we get
\begin{align}\label{goalofmain2}
\lim_{N \to \infty} \int_{\T} dk ~ \bigl| \mathbb{E}_{k}[e^{2 \pi \sqrt{-1} p Z_{N}(Nt)} (e^{2 \pi \sqrt{-1} x K(Nt)} - \int_{\T} dk^{'} e^{2 \pi \sqrt{-1} x k^{'}} )] \bigr|^{2} = 0
\end{align}
for any $p \in \R , x \in \Z$, then we obtain $(\ref{homogenizeonT})$. 

From now on we will show $(\ref{goalofmain2})$. We use a trick to consider the convergence of $Z_{N}(Nt)$ and $K(Nt)$ separately. Let $\{ m_{N} \}$ be a family of increasing positive numbers which satisfies
\begin{align*}
\lim_{N \to \infty} m_{N} = \infty, \quad \lim_{N \to \infty} m_{N} N(\theta)^{-1} = 0.
\end{align*}
Then we have
\begin{align*}
&\bigl| \mathbb{E}_{k}[e^{2 \pi \sqrt{-1} p Z_{N}(Nt)} e^{2 \pi \sqrt{-1} x K(Nt)} - e^{2 \pi \sqrt{-1} p Z_{N}(Nt - m_{N}t)} e^{2 \pi \sqrt{-1} x K(Nt)} ] \bigr|^{2} \\
&\le \mathbb{E}_{k}[\bigl| e^{2 \pi \sqrt{-1} p ( Z_{N}(Nt) - Z_{N}(Nt - m_{N}t) )} - 1 \bigr|^{2}] \\
&\le p^{2} \mathbb{E}_{k}[\bigl| Z_{N}(Nt) - Z_{N}(Nt - m_{N}t) \bigr|^{2}],
\end{align*}
and
\begin{align*}
&\int_{\T} dk ~ p^{2} \mathbb{E}_{k}[\bigl| Z_{N}(Nt) - Z_{N}(Nt - m_{N}t) \bigr|^{2}] \\ 
&\le p^{2} \frac{ m_{N}t}{N(\theta)^{2}} \int_{Nt - m_{N}t}^{Nt} ds \int_{\T} dk ~  \mathbb{E}_{k}[|\omega^{'}(K(s))|^{2}] \\
&= p^{2} \frac{m_{N}^{2}t^{2}}{N(\theta)^{2}} \int_{\T} dk |\omega^{'}(k)|^{2} \to 0, \quad N \to \infty.
\end{align*}
Here we use the fact that the uniform probability measure on $\T$ is the reversible probability measure of $\{ K(t) ; t \ge 0 \}$. Hence we can replace $\mathbb{E}_{k}[e^{2 \pi \sqrt{-1} p Z_{N}(Nt)} e^{2 \pi \sqrt{-1} x K(Nt)}]$ by $\mathbb{E}_{k}[e^{2 \pi \sqrt{-1} p Z_{N}(Nt - m_{N}t)} e^{2 \pi \sqrt{-1} x K(Nt)}]$. By the same argument, we can also replace $\mathbb{E}_{k}[e^{2 \pi \sqrt{-1} p Z_{N}(Nt)} \int_{\T} dk^{'} e^{2 \pi \sqrt{-1} x k^{'}} ]$ by $\mathbb{E}_{k}[e^{2 \pi \sqrt{-1} p Z_{N}(Nt - m_{N}t)} \int_{\T} dk^{'} e^{2 \pi \sqrt{-1} x k^{'}} ]$. In addition, by using the Markov property we have
\begin{align*}
&\bigl| \mathbb{E}_{k}[e^{\sqrt{-1} p Z_{N}(Nt - m_{N}t)} (e^{2 \pi \sqrt{-1} x K(Nt)} - \int_{\T} dk^{'} e^{2 \pi \sqrt{-1} x k^{'}})] \bigr|^{2} \\
&= \bigl| \mathbb{E}_{k}[e^{2 \pi \sqrt{-1} p Z_{N}(Nt - m_{N}t)} \mathbb{E}_{K(Nt - m_{N}t)}[e^{2 \pi \sqrt{-1} x K(m_{N}t)} - \int_{\T} dk^{'} e^{2 \pi \sqrt{-1} x k^{'}} ]] \bigr|^{2} \\
&\le \mathbb{E}_{k}[ \bigl| \mathbb{E}_{K(Nt - m_{N}t)}[e^{2 \pi \sqrt{-1} x K(m_{N}t)} - \int_{\T} dk^{'} e^{2 \pi \sqrt{-1} x k^{'}} ] \bigr|^{2} ].
\end{align*}
Let $h(k) := e^{2 \pi \sqrt{-1} x k} - \int_{\T} dk^{'} e^{2 \pi \sqrt{-1} x k^{'}}$ and denote by $\{ P^{t} ; t \ge 0 \}$ the semigroup generated by $2 \gamma_{0} \mathcal{L}$. Since $0$ is simple eigenvalue of $2 \gamma_{0} \mathcal{L}$ and the uniform probability measure on $\T$ is the reversible probability measure, we have $\lim_{t \to \infty} \| P^{t} f \|^{2}_{\mathbb{L}^{2}(\T)} \to 0$ for any $f \in \mathbb{L}^{2}(\T), \int_{\T} dk ~ f(k) = 0$ by the ergodic theorem. Hence we obtain
\begin{align*}
&\int_{\T} dk ~ \mathbb{E}_{k}[ \bigl| \mathbb{E}_{K(Nt - m_{N}t)}[e^{2 \pi \sqrt{-1} x K(m_{N}t)} - \int_{\T} dk^{'} e^{2 \pi \sqrt{-1} x k^{'}} ] \bigr|^{2} ] \\
&= \| P^{m_{N}t} h \|^{2}_{\mathbb{L}^{2}(\T)} \to 0, \quad N \to \infty.
\end{align*}

Summarizing the above, we have
\begin{align*}
&\int_{\T} dk ~ \bigl| \mathbb{E}_{k}[e^{2 \pi \sqrt{-1} p Z_{N}(Nt)} (e^{2 \pi \sqrt{-1} x K(Nt)} - \int_{\T} dk^{'} e^{2 \pi \sqrt{-1} x k^{'}} )] \bigr|^{2} \\
&\lesssim \int_{\T} dk ~ | \mathbb{E}_{k}[e^{2 \pi \sqrt{-1} p Z_{N}(Nt)} e^{2 \pi \sqrt{-1} x K(Nt)} - e^{2 \pi \sqrt{-1} p Z_{N}(Nt - m_{N}t)} e^{2 \pi \sqrt{-1} x K(Nt)} ] \bigr|^{2} \\
& \quad + \int_{\T} dk ~ \bigl| \mathbb{E}_{k}[e^{2 \pi \sqrt{-1} p Z_{N}(Nt - m_{N}t)} (e^{2 \pi \sqrt{-1} x K(Nt)} - \int_{\T} dk^{'} e^{2 \pi \sqrt{-1} x k^{'}})] \bigr|^{2} \\
& \quad + \int_{\T} dk ~ | \mathbb{E}_{k}[e^{2 \pi \sqrt{-1} p Z_{N}(Nt - m_{N}t)} \int_{\T} dk^{'} e^{2 \pi \sqrt{-1} x k^{'}} - e^{2 \pi \sqrt{-1} p Z_{N}(Nt)} \int_{\T} dk^{'} e^{2 \pi \sqrt{-1} x k^{'}} ] \bigr|^{2} \\
& \to 0, \quad N \to \infty.
\end{align*}
We have thus established the theorem.

\section*{Acknowledgement}

We are grateful to Keiji Saito for insightful discussions. HS was supported by JSPS KAKENHI Grant Number JP19J11268 and the Program for Leading Graduate Schools, MEXT, Japan.  

\appendix

\section{Existence and Uniqueness of the solution of ($\ref{defofpsi}$)}\label{app:exuni}

Let $(E, \mathcal{F}, \mathbb{P})$ be a probability space and $B$ be a cylindrical Wiener process in $\mathbb{L}^{2}(\T)$ defined on $(E, \mathcal{F}, \mathbb{P})$. For any $T > 0$, we introduce a Banach space $\mathcal{H}_{T}$ defined as
\begin{align*}
\mathcal{H}_{T} &:= \{ f : \T \times [0,T] \times \Omega \to \mathbb{C} ; \| f \|_{\mathcal{H}} := \bigr( \sup_{0 \le t \le T} \mathbb{E}[\|f(t)\|_{\mathbb{L}^{2}(\T)}^{2}] \bigl)^{\frac{1}{2}} < \infty \}.
\end{align*}
Fix a $\widehat{\psi}_{0} \in \mathbb{L}^{2}(\T)$. We define a mapping $I_{T} : \mathcal{H}_{T} \to \mathcal{H}_{T}$ as 
\begin{align*}
I(f)(t) &:= \widehat{\psi}_{0}(k) - \int_{0}^{t} ds ~ \sqrt{-1} \omega(k) f(k,s) + \gamma R(k) \{ f(k,s) - f^{*}(-k,s) \} ] \\
& ~ + \sqrt{-1} \sqrt{\gamma} \int_{0}^{t} \int_{\T} B(dk^{'},ds) ~ r(k,k') [f(k-k',s) - f{*}(k'-k,s)] 
\end{align*} 
If $T > 0$ is sufficiently small, then $I$ is contractive and there exists the unique fixed point $\widehat{\psi} \in \mathcal{H}_{T}$. The case of general $T > 0$ can be handled in the usual way.

\section{Auxiliary Results}

\begin{lemma}\label{expressionR}
\begin{align}
R(k,k',p) &= 8(\sin^{2}{\pi k} - \sin^{2}{\frac{\pi p}{2}})(\sin^{2}{\pi k'} - \sin^{2}{\frac{\pi p}{2}})  (\sin^{2}{\pi(k+k')} + \sin^{2}{\pi(k-k')} - 2\sin^{2}{\pi p}) \notag \\
&=: R(k,k') + \sin^{2}{\frac{\pi p}{2}} R_{1}(k,k') + \sin^{4}{\frac{\pi p}{2}} R_{2}(k,k') \notag \\
& \quad + \sin^{6}{\frac{\pi p}{2}} R_{3}(k,k') + \sin^{8}{\frac{\pi p}{2}} R_{4}(k,k') ,\\
R(k,k') &= \frac{3}{4} \sum_{i = 1,2} \mathbf{e}_{i}(k)\mathbf{e}_{3 - i}(k'), 
\end{align}
where $R(k,k^{'},p)$ is defined in $(\ref{scatkernel})$ and $\mathbf{e}_{i}, i=1,2$ are defined as
\begin{align}\label{defofe}
\mathbf{e}_{1}(k) := \frac{8}{3} \sin^{4}(\pi k) , \quad \mathbf{e}_{2}(k) := 2 \sin^{2}(2\pi k).
\end{align}
\end{lemma}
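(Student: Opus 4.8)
The plan is to prove both displayed identities by direct trigonometric computation, organized in three stages: the product formula for $R(k,k',p)$ (the only substantial step), the expansion of that product in powers of $\sin^{2}(\pi p/2)$, and the identification of the zeroth-order term with $\tfrac34\sum_{i}\mathbf{e}_{i}(k)\mathbf{e}_{3-i}(k')$.

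For the product formula I would start from the rewriting
\[
r(k,k') = 2\sin(\pi k)\sin(\pi(k-k'))\bigl(2\sin(\pi k)\cos(\pi(k-k'))+\cos(\pi k)\sin(\pi(k-k'))\bigr),
\]
which follows from $\sin 2x=2\sin x\cos x$, and also record the elementary rewrites
\[
\sin^{2}(\pi k)-\sin^{2}(\tfrac{\pi p}{2})=\sin\bigl(\pi(k+\tfrac p2)\bigr)\sin\bigl(\pi(k-\tfrac p2)\bigr),\qquad \sin^{2}(\pi(k+k'))+\sin^{2}(\pi(k-k'))-2\sin^{2}(\pi p)=\cos 2\pi p-\cos 2\pi k\,\cos 2\pi k',
\]
the second coming from $\sin^{2}X+\sin^{2}Y=1-\cos(X+Y)\cos(X-Y)$ and $2\sin^{2}(\pi p)=1-\cos 2\pi p$. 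Thus the target identity is equivalent to
\[
R(k,k',p)=8\,\sin\bigl(\pi(k+\tfrac p2)\bigr)\sin\bigl(\pi(k-\tfrac p2)\bigr)\sin\bigl(\pi(k'+\tfrac p2)\bigr)\sin\bigl(\pi(k'-\tfrac p2)\bigr)\bigl(\cos 2\pi p-\cos 2\pi k\,\cos 2\pi k'\bigr).
\]
To verify this, substitute $k\mapsto k\pm\tfrac p2$ and $k'\mapsto k+\iota k'$ into the displayed form of $r$, multiply the two factors, and sum over $\iota=\pm1$; since $R(k,k',p)$ is manifestly even in $k'$, the terms odd in $k'$ cancel, and what survives can be collapsed with the power-reduction and product-to-sum formulas. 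The cleanest way to make this rigorous and error-free is to pass to the exponential variables $\mu=e^{\sqrt{-1}\pi k}$, $\nu=e^{\sqrt{-1}\pi k'}$, $\xi=e^{\sqrt{-1}\pi p/2}$: every $\sin$ and $\cos$ occurring on either side becomes a Laurent monomial or polynomial in $\mu,\nu,\xi$, so after clearing the numerical denominators the identity reduces to a finite, mechanical comparison of two Laurent polynomials. I expect this to be the main obstacle of the proof — conceptually routine, but the only place where genuine computation is required.

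Granting the product formula, the expansion in $s:=\sin^{2}(\pi p/2)$ is immediate. Since $\sin^{2}(\pi p)=4\sin^{2}(\tfrac{\pi p}{2})\cos^{2}(\tfrac{\pi p}{2})=4s(1-s)$, the last factor equals $\sin^{2}(\pi(k+k'))+\sin^{2}(\pi(k-k'))-8s+8s^{2}$, a polynomial of degree $2$ in $s$, while $(\sin^{2}(\pi k)-s)(\sin^{2}(\pi k')-s)$ is also of degree $2$; hence $R(k,k',p)$ is a polynomial of degree $4$ in $s$, and the coefficients of $s^{0},s^{1},\dots,s^{4}$ are, by definition, $R(k,k')$ and $R_{1},\dots,R_{4}$. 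In particular the constant term is $R(k,k')=8\sin^{2}(\pi k)\sin^{2}(\pi k')\bigl(\sin^{2}(\pi(k+k'))+\sin^{2}(\pi(k-k'))\bigr)$.

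Finally, for $R(k,k')=\tfrac34\sum_{i=1,2}\mathbf{e}_{i}(k)\mathbf{e}_{3-i}(k')$: the identity $\sin^{2}(A+B)+\sin^{2}(A-B)=2\sin^{2}A\cos^{2}B+2\cos^{2}A\sin^{2}B$ with $A=\pi k$, $B=\pi k'$ gives $R(k,k')=16\sin^{2}(\pi k)\sin^{2}(\pi k')\bigl(\sin^{2}(\pi k)\cos^{2}(\pi k')+\cos^{2}(\pi k)\sin^{2}(\pi k')\bigr)$. On the other hand, using $\mathbf{e}_{1}(k)=\tfrac83\sin^{4}(\pi k)$, $\mathbf{e}_{2}(k)=2\sin^{2}(2\pi k)$ and $\sin^{2}(2\pi k)=4\sin^{2}(\pi k)\cos^{2}(\pi k)$, one computes $\tfrac34\bigl(\mathbf{e}_{1}(k)\mathbf{e}_{2}(k')+\mathbf{e}_{2}(k)\mathbf{e}_{1}(k')\bigr)=16\sin^{2}(\pi k)\sin^{2}(\pi k')\bigl(\sin^{2}(\pi k)\cos^{2}(\pi k')+\cos^{2}(\pi k)\sin^{2}(\pi k')\bigr)$, which agrees with the previous expression and yields the claimed identity.
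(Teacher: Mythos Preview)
Your proposal is correct and follows essentially the same route as the paper: a direct trigonometric computation for the product formula, then reading off the expansion in $\sin^{2}(\pi p/2)$ and identifying the constant term. The paper's execution of the main step is slightly more concrete than your sketch: rather than passing to exponential variables, it expands the product $r(k+\tfrac p2,k-k')\,r(k-\tfrac p2,k-k')$ into its four bilinear terms, uses the identity $\sin(A+B)\sin(A-B)=\sin^{2}A-\sin^{2}B$ to pull out the common factor $(\sin^{2}\pi k-\sin^{2}\tfrac{\pi p}{2})(\sin^{2}\pi k'-\sin^{2}\tfrac{\pi p}{2})$, and then simplifies the remaining bracket to $4(\sin^{2}\pi(k+k')-\sin^{2}\pi p)$ before summing over $\iota=\pm1$. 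Your verification of the $\mathbf{e}_{i}$ decomposition is in fact more detailed than the paper's, which simply states that identity without proof.
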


\begin{remark}
Notice that $\int_{\T} dk^{'} ~ R(k,k^{'}) = R(k)$, where $R(k)$ is defined in $(\ref{defofmeanscat})$.
Functions $\mathbf{e}_{i} , i=1,2$ are normalized to satisfy $\int_{\T} dk ~ \mathbf{e}_{i}(k) = 1$. 
\end{remark}

\begin{proof}

\begin{align*}
&\frac{1}{4} r(k-\frac{p}{2},k-k^{'})r(k+\frac{p}{2},k-k^{'}) \notag \\
&= \sin^{2}{\pi(k+\frac{p}{2})} \sin^{2}{\pi(k-\frac{p}{2})} \sin{2\pi(k^{'}+\frac{p}{2})} \sin{2\pi(k^{'}-\frac{p}{2})} \\
& \quad + \sin^{2}{\pi(k^{'}+\frac{p}{2})} \sin^{2}{\pi(k^{'}-\frac{p}{2})} \sin{2\pi(k +\frac{p}{2})} \sin{2\pi(k-\frac{p}{2})} \\
& \quad + \sin^{2}{\pi(k+\frac{p}{2})} \sin^{2}{\pi(k^{'}-\frac{p}{2})} \sin{2\pi(k^{'} +\frac{p}{2})} \sin{2\pi(k-\frac{p}{2})} \\
& \quad + \sin^{2}{\pi(k^{'}+\frac{p}{2})} \sin^{2}{\pi(k-\frac{p}{2})} \sin{2\pi(k +\frac{p}{2})} \sin{2\pi(k^{'}-\frac{p}{2})} \\
&= (\sin^{2}{\pi k} - \sin^{2}{\frac{\pi p}{2}})(\sin^{2}{\pi k'} - \sin^{2}{\frac{\pi p}{2}}) \\
& \quad \times \begin{cases}  4\{ \sin^{2}{\pi k} - \sin^{2}{\frac{\pi p}{2}} - (\sin^{2}{\pi k^{'}} + \sin^{2}{\frac{\pi p}{2}})(\sin^{2}{\pi k} - \sin^{2}{\frac{\pi p}{2}}) \} \\
\quad + 4 \{ \sin^{2}{\pi k^{'}} - \sin^{2}{\frac{\pi p}{2}} - (\sin^{2}{\pi k} + \sin^{2}{\frac{\pi p}{2}})(\sin^{2}{\pi k^{'}} - \sin^{2}{\frac{\pi p}{2}}) \} \\
\quad + (\sin{2\pi k} + \sin{\pi p })(\sin{2\pi k^{'}} - \sin{\pi p}) \\
\quad + (\sin{2\pi k^{'}} + \sin{\pi p })(\sin{2\pi k} - \sin{\pi p}) \end{cases} \\
&= 4 (\sin^{2}{\pi k} - \sin^{2}{\frac{\pi p}{2}})(\sin^{2}{\pi k'} - \sin^{2}{\frac{\pi p}{2}})(\sin^{2}{\pi(k + k^{'})} - \sin^{2}{\pi p}).
\end{align*}
Therefore we have
\begin{align*}
R(k,k',p) &= \frac{1}{2} \sum_{\iota = \pm } r(k-\frac{p}{2},k + \iota k^{'})r(k+\frac{p}{2},k + \iota k^{'}) \\
&= 8(\sin^{2}{\pi k} - \sin^{2}{\frac{\pi p}{2}})(\sin^{2}{\pi k'} - \sin^{2}{\frac{\pi p}{2}}) \notag \\
& \quad \times (\sin^{2}{\pi(k+k')} + \sin^{2}{\pi(k-k')} - 2\sin^{2}{\pi p}). 
\end{align*}

\end{proof}

\begin{lemma}\label{asympofa}
\begin{align}
&\widehat{\a}(k) = \begin{cases} (4 \pi^{\theta - 1} \int_{0}^{\infty} dy ~ \frac{\sin^{2} y}{|y|^{\theta - 1}}) |k|^{\theta - 1} + O(k^{2}), \quad 2 < \theta < 3, \\
4 \pi^{2} k^{2} \bigl| \log |k| \bigr| + O(k^{2}), \quad \theta = 3, \\
4 \pi^{2} (\sum_{x \ge 1} |x|^{2 - \theta}) k^{2} + o(k^{2}), \quad \theta > 3. \end{cases} \label{asympofa1} \end{align}
\end{lemma}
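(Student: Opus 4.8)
The plan is to start from the elementary identity
\begin{align*}
\widehat{\a}(k) = \a_{0} + \sum_{x \neq 0} \a_{x} e^{-2\pi \sqrt{-1} x k} = 2 \sum_{x \ge 1} x^{-\theta} \bigl( 1 - \cos(2\pi x k) \bigr) = 4 \sum_{x \ge 1} x^{-\theta} \sin^{2}(\pi x k) ,
\end{align*}
which uses the evenness of $\a$ together with $\a_{0} = 2 \sum_{x \ge 1} x^{-\theta}$; the series converges absolutely because $\theta > 1$. The heuristic behind the three regimes is that the term $x^{-\theta}\sin^{2}(\pi x k)$ has size $\pi^{2} k^{2} x^{2-\theta}$ as long as $|xk| \lesssim 1$ and size $x^{-\theta}$ afterwards, so $\sum_{x \lesssim 1/k} x^{2-\theta}$ converges ($\theta > 3$), diverges logarithmically ($\theta = 3$), or diverges like $(1/k)^{3-\theta}$ ($2 < \theta < 3$). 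In the last two cases the dominant mass of the series sits at the scale $x \sim 1/k$, which is exactly why $\omega'(0)$ fails to be finite for $\theta \le 3$.

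For $\theta > 3$ I would divide by $k^{2}$ and apply the dominated convergence theorem to $\widehat{\a}(k)/k^{2} = 4 \sum_{x \ge 1} x^{-\theta} \sin^{2}(\pi x k)/k^{2}$: the general term tends to $\pi^{2} x^{2-\theta}$ and is dominated by the summable sequence $\pi^{2} x^{2-\theta}$, which gives $\widehat{\a}(k) = 4\pi^{2}\bigl(\sum_{x\ge1}x^{2-\theta}\bigr)k^{2} + o(k^{2})$.

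For $2 < \theta \le 3$ I would compare the series with the integral $4\int_{1}^{\infty} x^{-\theta}\sin^{2}(\pi x k)\,dx$. Writing $f(x) := x^{-\theta}\sin^{2}(\pi k x)$, one has $\sum_{x\ge1} f(x) - \int_{1}^{\infty} f(y)\,dy = \sum_{x\ge1}\int_{x}^{x+1}(f(x)-f(y))\,dy$, whose modulus is at most $\int_{1}^{\infty}|f'(y)|\,dy$; using $|\sin t| \le |t|$ one checks $\int_{1}^{\infty}|f'| \lesssim k^{2}\int_{1}^{\infty} x^{1-\theta}\,dx = O(k^{2})$ since $\theta > 2$. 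When $\theta < 3$ the integral converges also at the origin, and the substitution $y = \pi k x$ gives $4\int_{0}^{\infty} x^{-\theta}\sin^{2}(\pi x k)\,dx = 4\pi^{\theta-1}|k|^{\theta-1}\int_{0}^{\infty} \sin^{2}(y)\,y^{-\theta}\,dy = C(\theta)|k|^{\theta-1}$, with $C(\theta)$ the constant in \eqref{diffcoeffi2}, while $\int_{0}^{1} f = O(k^{2})$ because $\theta < 3$; hence $\widehat{\a}(k) = C(\theta)|k|^{\theta-1} + O(k^{2})$. When $\theta = 3$, the same substitution turns $\int_{1}^{\infty} x^{-3}\sin^{2}(\pi k x)\,dx$ into $(\pi k)^{2}\int_{\pi k}^{\infty} y^{-3}\sin^{2}(y)\,dy$, and decomposing $y^{-3}\sin^{2} y = y^{-1} + y^{-3}(\sin^{2} y - y^{2})$ on $(\pi k, 1)$ --- the second term being $O(y)$, hence integrable --- gives $\int_{\pi k}^{\infty} y^{-3}\sin^{2} y\,dy = |\log k| + O(1)$, so that $\widehat{\a}(k) = 4\pi^{2} k^{2}|\log|k|| + O(k^{2})$.

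The step I expect to be the main obstacle is securing the remainder at the stated precision: the correction must be $O(k^{2})$, which is genuinely lower order than the main term when $\theta < 3$ and only of the same order when $\theta = 3$, so the passage from the series to the integral has to be controlled uniformly in $k$ and the split between the scale $x \sim 1/k$ and the tail is essential; moreover for $\theta = 3$ one must extract the logarithmic divergence exactly rather than merely bound it, and for $\theta > 3$ the integral comparison would lose the constant, so there only the termwise limit recovers $\sum_{x\ge1}x^{2-\theta}$. (With this computation the coefficient in the regime $2 < \theta < 3$ is precisely the $C(\theta)$ of \eqref{diffcoeffi2}, i.e.\ it involves $\int_{0}^{\infty}\sin^{2}(y)\,y^{-\theta}\,dy$.)
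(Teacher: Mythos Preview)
Your proof is correct and follows essentially the same route as the paper: start from $\widehat{\a}(k)=4\sum_{x\ge1}x^{-\theta}\sin^2(\pi kx)$, treat $\theta>3$ by a direct termwise limit, and for $2<\theta\le3$ compare the sum with the integral $\int_1^\infty x^{-\theta}\sin^2(\pi kx)\,dx$, bound the discrepancy by a derivative estimate giving $O(k^2)$, and rescale to extract the leading power or logarithm. The only cosmetic difference is the order of operations (you bound the sum--integral error before rescaling, the paper after); your observation that the exponent in the lemma's displayed constant should be $|y|^{\theta}$ rather than $|y|^{\theta-1}$, matching $C(\theta)$ in \eqref{diffcoeffi2}, is also correct.
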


\begin{proof}
First we show (\ref{asympofa1}). When $\theta > 3$ (\ref{asympofa1}) is obvious, so we only consider the case $2 < \theta \le 3$. 
\begin{align*}
\widehat{\a}(k) &= \sum_{x \neq 0} e^{-2\pi k x} |x|^{-\theta} = 4 \sum_{x \in \N} \frac{\sin^{2}{\pi k x}}{|x|^{\theta}} \\
&= 4 \int_{1}^{\infty} dy ~ \frac{\sin^{2} \pi k y}{|y|^{\theta}} + 4 \sum_{x \in \N} \bigl( \int_{x}^{x+1} dy ~  \frac{\sin^{2}{\pi k x}}{|x|^{\theta}} - \frac{\sin^{2} \pi k y}{|y|^{\theta}} \bigr) \\
&= 4 \pi^{\theta - 1} |k|^{\theta - 1} \bigl[ \int_{|k|}^{\infty} dy ~ \frac{\sin^{2} y}{|y|^{\theta}} - \sum_{x \in \N} \int_{\pi |k| x}^{\pi |k| (x+1)} dy \int_{\pi |k| x}^{y} dy^{'} ~ \partial_{y^{'}}(\frac{\sin^{2} y^{'}}{|y^{'}|^{\theta}}) \bigr] .
\end{align*}
Since 
\begin{align*}
&\bigl| (\frac{\sin^{2} y^{'}}{|y^{'}|^{\theta}})^{'} \bigr| \le \frac{2+\theta}{|\pi k x|^{\theta -1}}, \quad \pi |k| x \le y^{'} \le y \\ 
&|y - \pi |k| x| \le \pi |k| , \quad \pi |k| x \le y \le \pi |k| (x+1),
\end{align*}
we have
\begin{align*}
&\int_{\pi k x}^{\pi k (x+1)} dy \int_{\pi k x}^{y} dy^{'} ~ \partial_{y^{'}} (\frac{\sin^{2} y^{'}}{|y^{'}|^{\theta}}) \lesssim \int_{\pi k x}^{\pi k (x+1)} dy ~ \frac{y - \pi k x}{|\pi k x|^{\theta -1}} \lesssim \frac{|k|^{3 - \theta}}{|x|^{\theta - 1}}.
\end{align*}
Therefore we obtain
\begin{align*}
4 \pi^{\theta - 1} |k|^{\theta - 1} \sum_{x \in \N} \int_{\pi |k| x}^{\pi |k| (x+1)} dy \int_{\pi |k| x}^{y} dy^{'} ~ \partial_{y^{'}}(\frac{\sin^{2} y^{'}}{|y^{'}|^{\theta}}) = O(k^{2}). 
\end{align*}
On the other hand, we get
\begin{align*}
&\lim_{k \to 0} \int_{|k|}^{\infty} dy ~ \frac{\sin^{2} y}{|y|^{\theta}} = \int_{0}^{\infty} ~ \frac{\sin^{2} y}{|y|^{\theta}} \quad 2 < \theta < 3, \\ 
&\lim_{k \to 0} \frac{1}{\bigl| \log |k| \bigr|} \int_{|k|}^{\infty} dy ~ \frac{\sin^{2} y}{|y|^{\theta}} = 1 \quad \theta = 3.
\end{align*}
Hence we have (\ref{asympofa1}) when $2 < \theta \le 3$.

\end{proof}

\section{Proof of Proposition \ref{energywave}}\label{app:energywave}

For notational simplicity, we omit the variable $t \ge 0$. From $\eqref{defofpotential}$, we have
\begin{align*}
\phi_{x} &:= \mathbb{E}_{\epsilon}[ \frac{1}{2} |\psi_{x}|^{2} - e_{x}] \\
&= \frac{1}{16} \int_{\T^{2}} dkdk^{'} e^{2 \pi \sqrt{-1} (k+k^{'}) x} F_{1}(k,k^{'}) \mathbb{E}_{\epsilon}[ (\widehat{\psi}(k) + \widehat{\psi}(-k)^{*}) (\widehat{\psi}(k^{'}) + \widehat{\psi}(-k^{'})^{*}) ], 
\end{align*}
where $F_{1}(k,k^{'}) := F(k,k^{'}) + 2$. Note that $F_{1}(k,-k) = 0, k \in \T$. Then by using $(\ref{initialbound2})$ and $\eqref{forlaplace}$ with $p = 0$, we obtain
\begin{align*}
&\bigl| <W_{\epsilon,+}(t),J> - \epsilon \sum_{x \in \Z} e_{x}(\frac{t}{f_{\theta,s}(\epsilon)}) J(\epsilon x) \bigr| \\
&= \bigl| \epsilon \sum_{x \in \Z} \phi_{x} J(\epsilon x) \bigr| = \bigl| \epsilon \sum_{x \in \Z} \phi_{x} \int_{\R} dp ~ e^{2 \pi \sqrt{-1} p \epsilon x} \widetilde{J}(p) \bigr| \\
&= \bigl| \frac{\epsilon}{16} \int_{\R \times \T} dpdk ~ F_{1}(k,-k - \epsilon p) \mathbb{E}_{\epsilon}[ (\widehat{\psi}(k) + \widehat{\psi}(-k)^{*}) (\widehat{\psi}(-k - \epsilon p) + \widehat{\psi}(k + \epsilon p)^{*})] \widetilde{J}(p) \bigr| \\
&\lesssim \int_{\R \times \T} dpdk ~ |F_{1}(k,-k - \epsilon p)| ~ \epsilon \mathbb{E}_{\epsilon}[|\widehat{\psi}(k)|^{2}] ~ |\widetilde{J}(p)| \\
&\lesssim \int_{\R} dp ~ \bigl( \int_{\T} dk ~ |F_{1}(k,-k - \epsilon p)|^{2} \bigr)^{\frac{1}{2}} \bigl( \int_{\T} dk ~ \epsilon^{2} \mathbb{E}_{\epsilon}[|\widehat{\psi}(k)|^{2}]^{2} \bigr)^{\frac{1}{2}} |\widetilde{J}(p)| \\
&\lesssim \int_{\R} dp ~ \bigl( \int_{\T} dk ~ |F_{1}(k,-k - \epsilon p)|^{2} \bigr)^{\frac{1}{2}} |\widetilde{J}(p)|. 
\end{align*}
Hence if we show that $\bigl( \int_{\T} dk ~ |F_{1}(k,-k - \epsilon p)|^{2} \bigr)^{\frac{1}{2}}$ is bounded above by some positive constant uniformly in $0 < \epsilon << 1, p \in \R$, then by using the dominated convergence theorem we have 
\begin{align*}
\lim_{\epsilon \to 0} \int_{\R} dp ~ \bigl( \int_{\T} dk ~ |F_{1}(k,-k - \epsilon p)|^{2} \bigr)^{\frac{1}{2}} |\widetilde{J}(p)| = 0.
\end{align*}
Now we will estimate $| \a(k+k^{'}) - \a(k) - \a(k^{'})|, ~ k^{'} = - k - \epsilon p$. Since $| \sin^{2}{(y_{1} + y_{2})} - \sin^{2}{y_{2}} - \sin^{2}{y_{2}} | \lesssim |\sin{y_{1}} \sin{y_{2}}|, ~ y_{1},y_{2} \in \R$, we get
\begin{align*}
&| \a(k+k^{'}) - \a(k) - \a(k^{'})| \lesssim \sum_{x \in \N} \frac{|\sin{\pi k x} \sin{\pi k^{'} x}|}{|x|^{\theta}} \\
&= \int_{1}^{\infty} dy ~ \frac{|\sin{\pi k y} \sin{\pi k^{'} y}|}{|y|^{\theta}} + \sum_{x \in \N} \bigl( \int_{x}^{x+1} dy ~  \frac{|\sin{\pi k x} \sin{\pi k^{'} x}|}{|x|^{\theta}} - \frac{|\sin{\pi k y} \sin{\pi k^{'} y}|}{|y|^{\theta}} \bigr) \\
&\lesssim |k|^{\frac{\theta - 1}{2}}|k^{'}|^{\frac{\theta - 1}{2}} \int_{\sqrt{|k k^{'}|}}^{\infty} dy \frac{|\sin{\sqrt{\frac{k}{k^{'}}}y} \sin{\sqrt{\frac{k^{'}}{k}}y}|}{|y|^{\theta}} + \sum_{x \in \N} \int_{x}^{x+1} dy \bigl| \int_{x}^{y} dy^{'} ~ \partial_{y^{'}} ( \frac{\sin{k y^{'}} \sin{k^{'}y^{'}}}{|y^{'}|^{\theta}} ) \bigr|.
\end{align*}
If $\theta > 3$, then we have 
\begin{align*}
| \a(k+k^{'}) - \a(k) - \a(k^{'})| \lesssim \sum_{x \in \N} \frac{|\sin{\pi k x} \sin{\pi k^{'} x}|}{|x|^{\theta}} \lesssim |kk^{'}| \lesssim \omega(k)\omega(k^{'}),
\end{align*}
and we see that $F(k,k^{'})$ is uniformly bounded by some positive constant. When $2 < \theta < 3$, we obtain
\begin{align*}
&|k|^{\frac{\theta - 1}{2}}|k^{'}|^{\frac{\theta - 1}{2}} \int_{\sqrt{|k k^{'}|}}^{\infty} dy \frac{|\sin{\sqrt{\frac{k}{k^{'}}}y} \sin{\sqrt{\frac{k^{'}}{k}}y}|}{|y|^{\theta}} \lesssim |k|^{\frac{\theta - 1}{2}}|k^{'}|^{\frac{\theta - 1}{2}} \lesssim \omega(k)\omega(k^{'}), \\
&\int_{x}^{x+1} dy \bigl| \int_{x}^{y} dy^{'} ~ \partial_{y^{'}} ( \frac{\sin{k y^{'}} \sin{k^{'}y^{'}}}{|y^{'}|^{\theta}} ) \bigr| \\ 
&= \int_{x}^{x+1} dy \bigl| \int_{x}^{y} dy^{'}  \frac{k \cos{k y^{'}} \sin{k^{'}y^{'}}}{|y^{'}|^{\theta}} + \frac{k^{'} \sin{k y^{'}} \cos{k^{'}y^{'}}}{|y^{'}|^{\theta}} - \frac{\theta \sin{k y^{'}} \sin{k^{'}y^{'}}}{|y^{'}|^{\theta + 1}} \bigr|  \\
&\lesssim \frac{|kk^{'}|}{|x|^{\theta - 1}} \lesssim  \frac{ \omega(k)\omega(k^{'})}{|x|^{\theta-1}}
\end{align*}
and thus we have $| \a(k+k^{'}) - \a(k) - \a(k^{'})| \lesssim \omega(k)\omega(k^{'})$. Finally we consider the case $\theta = 3$. \textcolor{black}{Since
\begin{align*}
&|kk^{'}| \int_{\sqrt{|k k^{'}|}}^{\infty} dy \frac{|\sin{\sqrt{\frac{k}{k^{'}}}y} \sin{\sqrt{\frac{k^{'}}{k}}y}|}{|y|^{3}} \lesssim \bigl| kk^{'} \log|k k^{'}|  \bigr| + |kk^{'}|, \\
&\int_{x}^{x+1} dy \bigl| \int_{x}^{y} dy^{'} ~ \partial_{y^{'}} ( \frac{\sin{k y^{'}} \sin{k^{'}y^{'}}}{|y^{'}|^{\theta}} ) \bigr|  \lesssim \frac{ |kk^{'}|}{|x|^{2}},
\end{align*}
we obtain
\begin{align*}
&|F_{1}(k,k^{'})|^{2} \lesssim \bigl| \frac{\sqrt{|\log |k||}}{\sqrt{|\log |k^{'}||}} + \frac{\sqrt{|\log |k^{'}||}}{\sqrt{|\log |k||}} + \frac{1}{\sqrt{|\log |k| \log |k^{'}||}} + 1 \bigr|^{2} \\
&\quad \lesssim \frac{|\log |k||}{|\log |k^{'}||} + \frac{|\log |k^{'}||}{|\log |k||} + \frac{1}{|\log |k| \log |k^{'}||} + 1, \\
&\int_{\T} dk ~ \frac{|\log |k||}{|\log |k^{'}||} = \int_{\T} dk ~ \frac{|\log |k^{'}||}{|\log |k||} \lesssim \int_{\T} dk ~ |\log |k^{'}|| = \int_{\T} dk ~ |\log |k|| < \infty, \\
&\int_{\T} dk ~ \frac{1}{|\log |k| \log |k^{'}||} \lesssim \int_{\T} dk ~ \frac{1}{|\log |k^{'}||} = \int_{\T} dk ~ \frac{1}{|\log |k||} < \infty.
\end{align*}
Thus the proposition is proved.}

\section{Proof of \eqref{conv:deltaomega}}\label{app:delome}

From the definition of $(\delta_{\epsilon} \omega)_{\epsilon}$, we have
\begin{align*}
(\delta_{\epsilon} \omega)_{\epsilon}(k,p) = \begin{cases} \frac{4\pi \gamma_{0}^{- \frac{3 - \theta}{7 - \theta}} |p|^{\frac{3 - \theta}{7 - \theta}} (\frac{f_{\theta,s}(\epsilon)}{\epsilon^{s}})^{\frac{3 - \theta}{6}}}{\omega(k_{\epsilon}+\frac{\epsilon p}{2}) + \omega(k_{\epsilon}-\frac{\epsilon p}{2})} \sum_{x \ge 1} \frac{1}{|x|^{\theta - 1}} \frac{\sin \pi \epsilon p x}{\pi \epsilon p x} \sin 2 \pi k_{\epsilon} x \quad &2 < \theta < 3,\\
\frac{4\pi \{ - \log (\frac{f_{\theta,s}(\epsilon)}{\epsilon^{s}})^{\frac{1}{3}} \}^{-\frac{1}{2}}}{\omega(k_{\epsilon}+\frac{\epsilon p}{2}) + \omega(k_{\epsilon}-\frac{\epsilon p}{2})} \sum_{x \ge 1} \frac{1}{|x|^{2}} \frac{\sin \pi \epsilon p x}{\pi \epsilon p x} \sin 2 \pi k_{\epsilon} x \quad &\theta = 3, \\ \frac{4\pi}{\omega(k_{\epsilon}+\frac{\epsilon p}{2}) + \omega(k_{\epsilon}-\frac{\epsilon p}{2})} \sum_{x \ge 1} \frac{1}{|x|^{\theta - 1}} \frac{\sin \pi \epsilon p x}{\pi \epsilon p x} \sin 2 \pi k_{\epsilon} x \quad &\theta > 3. \end{cases}
\end{align*}

If $2 < \theta < 3$, then we have
\begin{align}\label{delome:23}
(\delta_{\epsilon} \omega)_{\epsilon}(k,p) &= \frac{1}{2} |k|^{- \frac{3 - \theta}{2}} \frac{2 |k_{\epsilon}|^{\frac{\theta - 1}{2}}}{\omega(k_{\epsilon}+\frac{\epsilon p}{2}) + \omega(k_{\epsilon}-\frac{\epsilon p}{2})} \frac{4 \pi}{|k_{\epsilon}|^{\theta - 2}} \sum_{x \ge 1} \frac{1}{|x|^{\theta - 1}} \sin 2 \pi k_{\epsilon} x \notag \\
& \quad + \frac{1}{2} \gamma_{0}^{\frac{2(\theta - 2)}{7 - \theta}} |p|^{-\frac{2(\theta - 2)}{7 - \theta}} \operatorname{sgn}(k) |k|^{- \frac{\theta - 1}{2}} \frac{2 |k_{\epsilon}|^{\frac{\theta - 1}{2}}}{\omega(k_{\epsilon}+\frac{\epsilon p}{2}) + \omega(k_{\epsilon}-\frac{\epsilon p}{2})} \notag \\
& \quad \quad \times \frac{4 \pi}{(\frac{f_{\theta,s}(\epsilon)}{\epsilon^{s}})^{\frac{\theta - 2}{3}}} \sum_{x \ge 1} \frac{1}{|x|^{\theta - 1}} (\frac{\sin \pi \epsilon p x}{\pi \epsilon p x} - 1) \sin 2 \pi k_{\epsilon} x.
\end{align}
From \eqref{asympofa}, the first term of \eqref{delome:23} converges to $\operatorname{sgn}(k) \frac{(\theta - 1)\sqrt{C(\theta)}}{2} |k|^{- \frac{3 - \theta}{2}}$ for any $(p,k) \neq (0,0)$. Now we will estimate the second term of \eqref{delome:23}. Fix a number $0 < a < \theta - 2$. Since there exists some constant $C_{a} > 0$ such that $\bigl| \frac{\sin x}{x} - 1 \bigr| \le C_{a} |x|^{a}$ for any $x \in \R$ and $\lim_{\epsilon \to 0} \epsilon (\frac{f_{\theta,s}(\epsilon)}{\epsilon^{s}})^{- \frac{1}{3}} = 0$, we obtain
\begin{align*}
&\bigl| (\frac{f_{\theta,s}(\epsilon)}{\epsilon^{s}})^{- \frac{\theta - 2}{3}} \sum_{x \ge 1} \frac{1}{|x|^{\theta - 1}} (\frac{\sin \pi \epsilon p x}{\pi \epsilon p x} - 1) \sin 2 \pi k_{\epsilon} x \bigr| \\ 
&\le C_{a} (\frac{f_{\theta,s}(\epsilon)}{\epsilon^{s}})^{- \frac{\theta - 2}{3}} \sum_{x \ge 1} \frac{1}{|x|^{\theta - 1 - a}} | \sin 2 \pi k_{\epsilon} x | \\
&\le C_{a,\theta} (\frac{f_{\theta,s}(\epsilon)}{\epsilon^{s}})^{- \frac{\theta - 2}{3}} \epsilon^{a} |k_{\epsilon}|^{\theta - 2 - a} \\
&\le C_{a,\theta,p,k} \bigl[ \epsilon (\frac{f_{\theta,s}(\epsilon)}{\epsilon^{s}})^{- \frac{1}{3}} \bigr]^{a} \to 0 \quad \epsilon \to 0,
\end{align*}
where $C_{a,\theta}, C_{a,\theta,p,k}$ are some constants which depend on the variables in their subscripts. 

If $\theta \ge 3$, then we can use almost the same discussion and thus we obtain \eqref{conv:deltaomega}.

\section{Proof of (\ref{toGamma})}\label{ap:toGamma}

First we observe
\begin{align*}
\begin{cases} \int_{\R} dy ~ (1-\cos y) |y|^{-\frac{6}{7-\theta} - 1} = \frac{7 - \theta}{3} \int_{0}^{\infty} dy ~ (\sin{y}) |y|^{-\frac{6}{7 - \theta}} &2 < \theta \le 3, \\
\int_{\R} dy ~ (1-\cos y) |y|^{-\frac{3}{2} - 1} = \frac{4}{3} \int_{0}^{\infty} dy ~ (\sin{y}) |y|^{-\frac{3}{2}}  &\theta > 3. \end{cases} 
\end{align*}
Hence it is sufficient to show that for any $1 < a < 2$,
\begin{align*}
\int_{0}^{\infty} dy ~ (\sin{y}) |y|^{-a} = \cos{\frac{a \pi}{2}} \Gamma(1-a).
\end{align*}
\textcolor{black}{For any positive constant $0 < b << 1$, we have
\begin{align*}
&\int_{b}^{\infty} dy ~ (\sin{y}) |y|^{-a} = |b|^{-a + 1} \int_{0}^{\infty} dy ~ \bigl( \sin{b(y+1)} \bigr) |y+1|^{-a} \\
&= \frac{ \sqrt{-1}}{2} |b|^{-a + 1} \int_{0}^{\infty} dy ~ e^{- \sqrt{-1}b(y+1)} - e^{\sqrt{-1}b(y+1)} |y+1|^{-a} \\
&= \frac{ \sqrt{-1}}{2} |b|^{-a + 1} \bigl( e^{-\sqrt{-1} b} \Psi(1,2-a;\sqrt{-1}b) - e^{\sqrt{-1} b} \Psi(1,2-a;-\sqrt{-1}b) \bigr) \\
&= \frac{\sqrt{-1}}{2} |b|^{-a + 1} \bigl( \Psi(1,2-a;\sqrt{-1}b) - \Psi(1,2-a;-\sqrt{-1}b) \bigr) + O(|b|^{2-a}), 
\end{align*} 
where $\Psi(~\cdot~,~\cdot~;~\cdot~)$ is the confluent hypergeometric function of the second kind. For the definition and the property of the confluent hypergeometric function, see \cite[Section 9]{L}. From the relationship between the confluent hypergeometric function $\Phi(~\cdot~,~\cdot~;~\cdot~)$ and $\Psi(~\cdot~,~\cdot~;~\cdot~)$, we have
\begin{align*}
\Psi(1,2-a;\pm \sqrt{-1}b) &= \frac{\Gamma(1+a)}{\Gamma(a)} \Phi(1,2-a;\pm \sqrt{-1}b) + \Gamma(1-a) (\pm \sqrt{-1} b)^{a-1} \Phi(a,a;\pm \sqrt{-1} b) \\
&= \frac{\Gamma(1+a)}{\Gamma(a)} \Phi(1,2-a;\pm \sqrt{-1}b) + \Gamma(1-a) (\pm \sqrt{-1} b)^{a-1} e^{\pm \sqrt{-1} b}.
\end{align*}
Since $|\Phi(1,2-a; \sqrt{-1}b) - \Phi(1,2-a; -\sqrt{-1}b)| = O(|b|)$, by taking the limit $b \to 0$ we have
\begin{align*}
\int_{0}^{\infty} dy ~ (\sin{y}) |y|^{-a} &= \frac{1}{2} \bigl( (\sqrt{-1})^{a} + (-\sqrt{-1})^{a} \bigr) \Gamma(1-a) \\
&= \cos{\frac{a \pi}{2}} \Gamma(1-a).
\end{align*}}


\begin{thebibliography}{21}
\bibitem{B} {\sc D. \ Bagchi } : {\em Energy transport in the presence of long-range interactions }. 
Phys. Rev. E \textbf{96}, 042121 (2017)
\bibitem{B2} {\sc D. \ Bagchi } : {\em Thermal transport in the Fermi-Pasta-Ulam model with long-range interactions}.  
Phys. Rev. E \textbf{95}, 032102 (2017)
\bibitem{BBO} {\sc G. \ Basile, C. \ Bernardin, S. \ Olla} : {\em Thermal Conductivity for a Momentum Conservative Model}.
Commun. \ Math. \ Phys. \textbf{287}, 67-98 (2009)
\bibitem{BOS} {\sc G. \ Basile, S. \ Olla, H. \ Spohn} : {\em Energy transport in stochastically perturbed lattice dynamics}.
Arch. \ Ration. \ Mech. \textbf{195}, 171--203 (2009) 
\bibitem{BO} {\sc C. \ Bernardin, S. \ Olla } : {\em Fourier's Law for a microscopic model of heat conduction }. 
J. Stat. Phys. \textbf{121}, 271--289 (2005)
\bibitem{D} {\sc A. \ Dhar} : {\em Heat transport in low-dimensional systems}.
Adv. \ Phys. \textbf{57}(5), 457--537 (2008)
\bibitem{GV} {\sc I. \ M. \ Gelfand, N. \ Ya. \ Vilenkin} : {\em Generalized Functions volume 4}.
Academic Press, New York (1964)
\bibitem{ICLLC} {\sc S. \ Iubini, P. \ Di Cintio, S. \ Lepri, R. \ Livi, and L. \ Casetti } : {\em Heat transport in oscillator chains with long-range interactions coupled to thermal reservoirs}. Phys. Rev. E \textbf{97}, 032102 (2018)
\bibitem{KJO} {\sc M. \ Jara, T. \ Komorowski,  S. \ Olla} : {\em A limit theorem for an additive functionals of Markov chains}.
Ann. \ Appl. \ Probab. \textbf{19}, 2270--2230 (2009)
\bibitem{JKO} {\sc M. \ Jara, T. \ Komorowski, S. \ Olla} : {\em Superdiffusion of Energy in a Chain of Harmonic Oscillators with Noise}. Commun. \ Math. \ Phys. \textbf{339}, 407--453 (2015)
\bibitem{KO2} {\sc T. \ Komorowski, S. \  Olla}, {\em Ballistic and superdiffusive scales in macroscopic evolution of a chain of oscillators}, Nonlinearity \textbf{29}, 962--999 (2016),
\bibitem{KO} {\sc T. \ Komorowski, S. \  Olla}, {\em Diffusive Propagation of Energy in a Non-acoustic Chain}, Arch. Rational Mech. Anal., \textbf{223} 95--139 (2017)
\bibitem{KOS} {\sc T. \ Komorowski, S. \  Olla, M. \ Simon} : {\em Macroscopic evolution of mechanical and thermal energy in a harmonic chain with random flip of velocities }. 
Kinetic and Related Models \textbf{11}(3) 615--645 (2018)
\textcolor{black}{\bibitem{L} {\sc N. \ N. \ Lebedev} : {\em Special functions and their applications}. 
Prentice-Hall, inc. (1965)}
\bibitem{Ls} {\em Thermal Transport in Low Dimensions : From Statistical Physics to Nanoscale Heat Transfer}.
edited by S. Lepri (Springer, New York, 2016)
\bibitem{LLPb} {\sc S. \ Lepri, R. \ Livi, A. \ Politi} : {\em Heat conduction in chains of nonlinear oscillators}. 
Phys. Rev. Lett. \textbf{78}, 1896--1899 (1997)
\bibitem{LLP} {\sc S. \ Lepri, R. \ Livi, A. \ Politi} : {\em Thermal conduction in classical low-dimensional lattices}.
Phys. Rep. \textbf{377}(1) 1--80 (2003)
\bibitem{MA} {\sc A. \ Mellet, S. Merino-Aceituno} : {\em Anomalous Energy Transport in FPU-$\beta$ Chain}. J. Stat. Phys. \textbf{160}(3), 583--621 (2015)
\bibitem{S} {\sc H. \ Spohn} : {\em Nonlinear fluctuating hydrodynamics for anharmonic chains}. J. Stat. Phys. \textbf{154}(5), 1191-1227 (2014)
\textcolor{black}{\bibitem{HS} {\sc H. \ Suda} : {\em Superballistic and superdiffusive scaling limits of stochastic harmonic chains with long-range interactions}. in preparation.
\bibitem{TS} {\sc S. \ Tamaki, K. \ Saito} : {\em Energy current correlation in solvable long-range interacting systems}. arXiv:1906.08457}
\end{thebibliography}
\end{document}